\def\dis{\displaystyle}
\def\ato0{{\buildrel{\dis\longrightarrow}\over{a\to0}}}
\newtheorem{theorem}{Theorem}[section]
\newtheorem{lemma}[theorem]{Lemma}
\newtheorem{remark}{Remark}
\numberwithin{equation}{section}
\DeclareMathSymbol{\C}{\mathalpha}{AMSb}{"43}
\newtheorem{theoreml}{Theorem}
\newtheorem*{theorem-a}{Theorem A}
\newtheorem*{theorem-b}{Theorem B}
\newcommand{\bsub}{\begin{subequations}}
\newcommand{\esub}{\end{subequations}$\!$}
\begin{document}
\title{Equivalence of critical and subcritical sharp Trudinger-Moser  inequalities in fractional dimensions and extremal functions\thanks{2000 Mathematics Subject Classification. 35J50, 46E35, 26D10, 35B33.}}
\author{Jos\'{e} Francisco de Oliveira
\\{\small Department of Mathematics}
\\{\small Federal University of Piau\'{\i}}\\
{\small 64049-550 Teresina, PI, Brazil}\\
{\small jfoliveira@ufpi.edu.br}
\and Jo\~{a}o Marcos do \'{O}\thanks{Second author was supported by CNPq grant 305726/2017-0}
\\{\small Department of Mathematics}
\\{\small Federal University of Para\'{\i}ba}\\
 {\small  58051-900 Jo\~{a}o Pessoa, PB, Brazil}\\
{\small jmbo@pq.cnpq.br}
}
\date{}
\maketitle
\begin{abstract}
We establish critical and subcritical sharp Trudinger-Moser inequalities for fractional dimensions on the whole space. Moreover, we obtain asymptotic lower and upper bounds for the fractional subcritical Trudinger-Moser supremum from which we can prove the equivalence between critical and subcritical inequalities.  Using this equivalence, we prove the existence of maximizers for both the subcritical and critical associated extremal problems.   As a by-product of this development, we can explicitly calculate the value of the critical supremum in some special situations.
\end{abstract}

\vskip 0.2truein
\noindent \textit{Key words.} Sobolev inequality; Trudinger-Moser inequality; Differential Equations; Fractional Dimensions; Extremals; Sharp constant.

\section{Introduction}
Let  $0<R\le\infty$, $\alpha,\;\theta\ge0$ and $q\ge 1$  are real numbers. Set $L^q_{\theta}=L^q_{\theta}(0,R)$ the weighted Lebesgue space defined as the set of all measurable functions $u$ on $(0,R)$  such that
\begin{equation}\nonumber
 \|u\|_{L^q_{\theta}}=\left\{ \begin{array}{lll}
\left(\int_0^R |u(r)|^q \, \mathrm{d}\lambda_{\theta}
\right)^{{1}/{q}}<\infty & \mbox{if} & 1\le q<\infty, \\
\mbox{ess}\;\displaystyle\sup_{0<r<R}|u(r)|<\infty &\mbox{if}&
q=\infty
\end{array}\right.
\end{equation}
where we are denoting
\begin{equation}\label{fractional integral}
\begin{aligned}
\int_{0}^{R}f(r)\mathrm{d}\lambda_{\theta}=\omega_{\theta}\int_{0}^{R} f(r)r^{\theta}\mathrm{d}r,\quad 0<R\le \infty
\end{aligned}
\end{equation} 
with  $\omega_{\theta}$ defined by
$$
\omega_{\theta}=\frac{2\pi^{\frac{\theta+1}{2}}}{\Gamma(\frac{\theta+1}{2})},\;\;\;\mbox{with}\;\;\;\Gamma(x)=\int_0^{\infty} t^{x-1} e^{-t} \,
\mathrm{d} t.
$$

In the case that $\theta$ is a positive integer number $\omega_{\theta}$ agrees precisely with the known spherical volume element for Euclidean spaces $\mathbb{R}^{\theta+1}$. In fact,  according to the formalism  in \cite{Still77},  the integration of a radially symmetric function $f (r)$  in a $(\theta+1)$-dimensional fractional space is given by \eqref{fractional integral}, when $R=\infty$.  Integration over non-integer dimensional spaces is often used in the dimensional regularization method as a powerful tool to obtain results in statistical mechanics and quantum field theory \cite{Collins, Palmer, Zubair11}.  For a deeper discussion on this subject, we suggest \cite{Zubair12} and the references therein.

We emphasize that  the Lebesgue spaces $L^{q}_{\theta}$ is also related with the classical Hardy's inequality \cite{Hardy}, see \cite{Opic, OLDOUB} for more details.  In addition,  we can use $L^{q}_{\theta}$-spaces to define Sobolev type spaces that are suitable to investigate a general class of differential operators which includes the $p$-Laplace, $ p\ge 2$ and $k$-Hessian operators in the radial form, see for instance \cite{Clement-deFigueiredo-Mitidieri, Opic,DOMADE} and references therein. Indeed,  as observed by  P. Cl\'{e}ment et al. \cite{Clement-deFigueiredo-Mitidieri}, if we  consider $X_R=X^{1,p}_R\left(\alpha,\theta\right)$,  $\alpha,\;\theta\ge0$,  $p>1$ and $0<R\le\infty$,  as the set  of all locally absolutely continuous functions on  the interval $(0,R)$ such that   $\lim_{r\rightarrow R}u(r)=0$, $u\in L^{p}_{\theta}$ and $u^{\prime}\in L^{p}_{\alpha}$, then $X_R$ becomes a Banach space endowed with the norm
\begin{equation}\label{Xnorm-full}
\|u\|=(\|u\|^{p}_{L^{p}_{\theta}}+\|u^{\prime}\|^{p}_{L^{p}_{\alpha}})^{\frac{1}{p}}.
\end{equation}
Further,  we can distinguish  two special behaviors for the weighted Sobolev spaces $X_R$. Namely, 
\textit{the Sobolev case} when the condition
\begin{equation}\label{moldura-2s}
   \framebox{$ \alpha-p+1>0$}
\end{equation}
holds and  the \textit{Trudinger-Moser case} if
\begin{equation}\label{moldura-1}
   \framebox{$\alpha-p+1=0 .$}
\end{equation}

\noindent  In the Sobolev case  \eqref{moldura-2s} the value
$$
p^{\ast}:=p^{\ast}(\alpha,p,\nu)=\frac{(\nu+1)p}{\alpha-p+1}
$$ is the  \textit{critical exponent}  for the embedding  $$X^{1,p}_R(\alpha,\theta) \hookrightarrow L^q_{\nu}.$$
Indeed,  for the bounded situation $0<R<\infty$, one has the following continuous embedding 
\begin{equation}\label{Ebeddings}
    X^{1,p}_R(\alpha,\theta) \hookrightarrow L^q_{\nu},\;\; \mbox{if }\;\; q \in \left.\left(1, p^{\ast}\right.\right]\;\; \mbox{and}\;\; \min\left\{\theta,\nu\right\}\ge\alpha-p.
\end{equation}
Moreover, in the strict case $q < p^{\ast}$, the embedding is also compact.  In contrast, for  the 
\textit{Trudinger-Moser case} one has 
the compact embedding
\begin{equation}\label{EmbeddingsTM}
X^{1,p}_R(\alpha,\theta) \hookrightarrow L^q_{\nu},\;\; \mbox{if }\;\; q\in (1,\infty) \;\; \mbox{ and }\;\; \nu\ge0.
\end{equation}
However  $X_R \hookrightarrow L^\infty_{\nu}$ does not hold, as one can see taking  $u(r)=\ln(\ln(eR/r))$.

It is worth pointing out that the weighted Sobolev spaces  $X_R$ is employed by several authors to investigate existence of solutions for a large class of differential equations.  We recommend \cite{Clement-deFigueiredo-Mitidieri, Jacobsen, deFigueiredo-Goncalves-Miyagaki,OLDOUB, Esteban1,OLTOP} for a  general class of radial operators, and for $k$-Hessian equation  \cite{RUDOL, OLDOUBJDE, OLUBrmi} and recently \cite{OLUBna}. This paper deals with intrinsic properties of  $X_R$, which are related with sharp variational inequalities.  In this direction, let us first recall some previous results. Firstly,  the embedding in \eqref{EmbeddingsTM} does not find its threshold in the weighted Lebesgue spaces $ L^q_{\nu}$, instead,   in \cite{JJ2012} it was proved a sharp  inequality of the Trudinger-Moser type (see \cite{Moser, Trudinger67}) for $X_R$ which gets embedded into an weighted Orlicz space determined by  exponential growth. In fact, let us denote 
\begin{equation}\label{volume}
\mu_{\alpha,\theta}=(\theta+1)\omega_{\alpha}^{1/\alpha}\;\;\;\mbox{and}\;\;\;
|B_{R}|_{\theta}=\int_{0}^{R}\,\mathrm{d}\lambda_{\theta}.
\end{equation}
 Then, in \cite{JJ2012} the authors proved the following:
\begin{theoreml}\label{TheoremA}
Assume $0<R<\infty$,  $\alpha\ge 1$, $\theta\ge 0$ and $p=\alpha+1$ be real numbers. Then,
\begin{description}
\item [$\mathrm{(i)}$] We have $\exp(\mu|u|^{p/(p-1)})\in L^1_{\theta}$, for any $\mu>0$ and $u\in X^{1,p}_R(\alpha,\theta)$.\\
\item [$\mathrm{(ii)}$] There exists $c>0$ depending only on $\alpha,p$ and $\theta$ such that
\begin{equation}\label{TM1}
\sup_{\|u^{\prime}\|_{L^p_{\alpha}}\le1}\frac{1}{|B_R|_{\theta}}\int_{0}^{R}
e^{\mu|u|^{\frac{p}{p-1}}}\, \mathrm{d}\lambda_{\theta}
 \quad
\left\{
  \begin{array}{lll}
    \le c & \mbox{if} & \mu\le\mu_{\alpha,\theta}\\
    =\infty & \mbox{if} & \mu>\mu_{\alpha,\theta}
  \end{array}
\right..
\end{equation}\\
\item [$\mathrm{(iii)}$] The supremum in \eqref{TM1} is attained for all $0<\mu\le \mu_{\alpha,\theta}$.
\end{description}
\end{theoreml}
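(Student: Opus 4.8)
\medskip
\noindent\textbf{Proof proposal.}
The whole argument rests on one change of variables. Writing $p'=p/(p-1)$, put $t=\ln(R/r)$ and $\phi(t)=u(Re^{-t})$; since $p=\alpha+1$ a direct computation gives $\|u'\|_{L^p_\alpha}^p=\omega_\alpha\int_0^\infty|\phi'(t)|^p\,\mathrm{d}t$, and since $|B_R|_\theta=\omega_\theta R^{\theta+1}/(\theta+1)$,
\[
\frac{1}{|B_R|_\theta}\int_0^R e^{\mu|u|^{p'}}\,\mathrm{d}\lambda_\theta=(\theta+1)\int_0^\infty e^{\,\mu|\phi(t)|^{p'}-(\theta+1)t}\,\mathrm{d}t .
\]
Rescaling $\psi=\omega_\alpha^{1/p}\phi$ turns the constraint $\|u'\|_{L^p_\alpha}\le1$ into $\int_0^\infty|\psi'|^p\,\mathrm{d}t\le1$ with $\psi(0)=0$ (the boundary condition $u(R^-)=0$), and the exponent into $(\mu\,\omega_\alpha^{-1/(p-1)})|\psi(t)|^{p'}-(\theta+1)t$; note that $\mu\le\mu_{\alpha,\theta}=(\theta+1)\omega_\alpha^{1/\alpha}$ is exactly $\mu\,\omega_\alpha^{-1/(p-1)}\le\theta+1$. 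For part $(\mathrm i)$ no norm constraint is needed: given $u\in X^{1,p}_R(\alpha,\theta)$ and $\mu>0$, use $u'\in L^p_\alpha$ to pick $r_\varepsilon$ with $\int_0^{r_\varepsilon}|u'(s)|^ps^{\alpha}\,\mathrm{d}s\le\varepsilon$; H\"older's inequality with $s^{\alpha}=s^{p-1}$ gives, for $r<r_\varepsilon$, $|u(r)|\le|u(r_\varepsilon)|+\varepsilon^{1/p}(\ln(r_\varepsilon/r))^{1/p'}$, hence $\mu|u(r)|^{p'}\le(1+\delta)\mu\,\varepsilon^{1/(p-1)}\ln(r_\varepsilon/r)+C_{\delta,\varepsilon}$ by $(a+b)^{p'}\le(1+\delta)b^{p'}+C_\delta a^{p'}$. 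For $\varepsilon,\delta$ small the power of $r_\varepsilon/r$ is $<\theta+1$, so $e^{\mu|u|^{p'}}r^{\theta}$ is integrable near $0$; on $[r_\varepsilon,R]$ the function $u$ extends continuously up to $R$ and is bounded, which completes $(\mathrm i)$.

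For part $(\mathrm{ii})$, since $e^{\mu|u|^{p'}}\le e^{\mu_{\alpha,\theta}|u|^{p'}}$ when $\mu\le\mu_{\alpha,\theta}$, the upper bound follows once it is known for $\mu=\mu_{\alpha,\theta}$, which by the identity above is precisely the classical Moser lemma: $\int_0^\infty e^{(\theta+1)(|\psi(t)|^{p'}-t)}\,\mathrm{d}t\le C(p,\theta)$ whenever $\psi(0)=0$ and $\int_0^\infty|\psi'|^p\le1$ (rescale $t\mapsto(\theta+1)t$ and invoke \cite{Moser}; it can also be reproved by a direct one-dimensional estimate). For the blow-up when $\mu>\mu_{\alpha,\theta}$, test with the Moser family $u_n(r)=\omega_\alpha^{-1/p}n^{-1/p}\min\{\ln(R/r),n\}$, which lies in $X^{1,p}_R$ with $\|u_n'\|_{L^p_\alpha}=1$; restricting the exponential integral to $\{t\ge n\}$, i.e.\ $\{r\le Re^{-n}\}$, already contributes $\tfrac{1}{\theta+1}\exp\big((\mu\,\omega_\alpha^{-1/(p-1)}-(\theta+1))n\big)\to\infty$.

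For part $(\mathrm{iii})$ in the subcritical range $\mu<\mu_{\alpha,\theta}$: take a maximizing sequence $u_k$ with $\|u_k'\|_{L^p_\alpha}\le1$. As $X^{1,p}_R$ is reflexive ($p>1$), we may assume $u_k\rightharpoonup u$, and the compact embedding \eqref{EmbeddingsTM} with $\nu=\theta$ gives $u_k\to u$ in every $L^q_\theta$ and a.e.; weak lower semicontinuity yields $\|u'\|_{L^p_\alpha}\le1$. With $q_0:=\mu_{\alpha,\theta}/\mu>1$, part $(\mathrm{ii})$ bounds $e^{\mu|u_k|^{p'}}$ in $L^{q_0}_\theta$, so this family is uniformly integrable on the finite-measure space $((0,R),\lambda_\theta)$, and Vitali's theorem passes the exponential integral to the limit; thus $u$ is an extremal, and $u\not\equiv0$ since the supremum exceeds $1$, the value attained at $u\equiv0$.

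The remaining critical case $\mu=\mu_{\alpha,\theta}$ is the heart of the matter, and I expect it to be the main obstacle, because a maximizing sequence need not be precompact: its gradient energy may concentrate at $r=0$, precisely as for the Moser functions $u_n$. The plan is the concentration-compactness/blow-up scheme of Carleson--Chang and Flucher. Assuming concentration (so $u_k\rightharpoonup0$), a refined application of the Moser-lemma estimate yields a \emph{finite} sharp ceiling for $\limsup_k\frac{1}{|B_R|_\theta}\int_0^R e^{\mu_{\alpha,\theta}|u_k|^{p'}}\,\mathrm{d}\lambda_\theta$; on the other hand one constructs an explicit competitor---a Moser bump corrected by a fixed profile---whose functional value strictly exceeds that ceiling, a contradiction. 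Hence concentration cannot occur, $u_k$ converges strongly enough to pass to the limit exactly as in the subcritical case, and the limit is an extremal for every $0<\mu\le\mu_{\alpha,\theta}$. The two delicate points are the sharp value of the concentration ceiling and the construction of the strictly better test function; everything else reduces to the classical Moser lemma and routine functional analysis.
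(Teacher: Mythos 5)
This statement is Theorem~A of the paper, which is quoted verbatim from the earlier reference \cite{JJ2012} and is \emph{not} proved in the present paper, so there is no in-paper argument to compare against; I will evaluate your proposal on its own terms.

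Your change of variables $t=\ln(R/r)$, $\phi(t)=u(Re^{-t})$, $\psi=\omega_\alpha^{1/p}\phi$ is the correct classical Moser reduction in the case $p=\alpha+1$, and the computations that turn $\|u'\|_{L^p_\alpha}\le 1$ into $\int_0^\infty|\psi'|^p\le 1$ with $\psi(0)=0$ and the target integral into $(\theta+1)\int_0^\infty e^{\mu\omega_\alpha^{-1/(p-1)}|\psi|^{p'}-(\theta+1)t}\,dt$ are right, as is the rescaling $s=(\theta+1)t$ that puts the critical case $\mu=\mu_{\alpha,\theta}$ in the normalized form $\int_0^\infty e^{|\tilde\psi|^{p'}-s}\,ds$. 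Your proof of part~(i) via absolute continuity of $\int|u'|^p s^\alpha\,ds$, H\"older with $s^\alpha=s^{p-1}$, and the convexity inequality $(a+b)^{p'}\le(1+\delta)b^{p'}+C_\delta a^{p'}$ is sound, and the monotonicity argument plus Moser blow-up family for part~(ii) is correct. (A small slip: restricting the exponential integral to $\{t\ge n\}$ contributes $e^{(\mu\omega_\alpha^{-1/(p-1)}-(\theta+1))n}$, without the extra factor $1/(\theta+1)$ you wrote; this does not affect the conclusion.) The Vitali argument for attainability in the subcritical range $\mu<\mu_{\alpha,\theta}$ is also correct: $e^{\mu|u_k|^{p'}}$ is bounded in $L^{q_0}_\theta$ with $q_0=\mu_{\alpha,\theta}/\mu>1$ by part~(ii), the measure space is finite, compactness gives a.e.\ convergence, and the limit is a nontrivial extremal since the supremum is strictly larger than~$1$.

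The genuine gap is precisely where you flag it: attainability at the critical exponent $\mu=\mu_{\alpha,\theta}$, which is the substantial content of part~(iii) and of the cited paper. You correctly identify the Carleson--Chang/Flucher strategy — show that if a maximizing sequence concentrates at the origin, then $\limsup$ of the functional is bounded by an explicit finite threshold, and then exhibit a single test function whose value strictly exceeds that threshold — but you do not carry out either step: neither the sharp value of the concentration level for this weighted, fractional-dimensional setting nor the construction of the strictly better competitor (a truncated Moser bump matched to a fixed profile, with a careful expansion of the exponential integral in the gluing region) is provided. These are exactly the two delicate computations that make the theorem nontrivial, and they do not reduce to ``the classical Moser lemma and routine functional analysis.'' As written, your proposal proves (i), (ii), and the subcritical half of (iii), but leaves the critical case of (iii) as a programme rather than a proof.
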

In this paper we are mainly interested in the unbounded  case when $R=\infty$.  Here, according to \cite{OLTOP},  for the \textit{Sobolev case}, we also have the following continuous embedding  
\begin{equation}\label{Ebeddingswhole}
    X^{1,p}_{\infty}(\alpha,\theta) \hookrightarrow L^q_{\theta} \quad \mbox{if}\quad q \in \left.\left[p, p^{\ast}\right.\right]\quad\mbox{and}\quad\theta\ge\alpha-p.
\end{equation}
Also, the embeddings  \eqref{Ebeddingswhole} are compact under the strict conditions  $\theta>\alpha-p$ and $p<q < p^{\ast}$. In the \textit{Trudinger-Moser case} it  holds
the continuous embeddings
\begin{equation}
\label{TM-compactembeddings}
 X^{1,p}_{\infty}(\alpha,\theta) \hookrightarrow L^q_{\theta}\quad\mbox{for all}\quad q\in [p,\infty)
\end{equation}
which are compact in the   strict case $q>p$.

We recall the following  Trudinger-Moser type inequality of the scaling invariant form obtained in \cite{JJ2012}.

\begin{theoreml}\label{AT-PAMS} Assume $p\ge 2$, $\alpha=p-1$  and $\theta\ge 0$. For any $
\mu<\mu_{\alpha,\theta}$, there exists a positive constant $C_{p,\mu,\theta}$ such that, for all $u\in X^{1,p}_{\infty}(\alpha,\theta)$, $\|u^{\prime}\|_{L^p_{\alpha}}\le 1$
\begin{equation}
\int_{0}^{\infty}\varphi_{p}\left(\mu |u|^{\frac{p}{p-1}}\right)\mathrm{d}\lambda_{\theta}\le C_{p,\mu,\theta}\|u\|^{p}_{L^{p}_{\theta}},
\end{equation}
where 
\begin{equation}\label{exp-frac}
\varphi_p(t)=e^{t}-
\sum_{k=0}^{k_0-1}\frac{t^{k}}{k!}=\sum_{j\in\mathbb{N}\;:\; j\ge p-1}\frac{t^j}{j!},\; t\ge 0,
\end{equation}
with $k_0=\min\left\{j\in\mathbb{N}\;:\; j\ge p-1\right\}$.
The constant $\mu_{\alpha,\theta}$ is sharp in the sense that the supremum is infinity when $\mu\ge \mu_{\alpha,\theta}$.
\end{theoreml}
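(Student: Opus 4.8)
The plan is to obtain this scaling-invariant (subcritical) inequality from the bounded-domain critical inequality of Theorem A, via a truncation of $u$ at level $1$, after reducing to radially nonincreasing competitors. Throughout write $q=p/(p-1)$.

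First I would symmetrize. The two quantities $\int_0^\infty\varphi_p(\mu|u|^{q})\,\mathrm{d}\lambda_\theta$ and $\|u\|_{L^p_\theta}$ depend only on the distribution function of $|u|$ with respect to the measure $\lambda_\theta$, while the $\lambda_\theta$-decreasing rearrangement does not increase $\|u'\|_{L^p_\alpha}$; hence it suffices to prove the inequality when $u\ge0$ is nonincreasing on $(0,\infty)$, still with $u(r)\to0$ as $r\to\infty$ and $\|u'\|_{L^p_\alpha}\le1$. In the mixed-weight case $\alpha\neq\theta$ I would first perform the change of variable $\rho=r^{(\theta+1)/p}$, which — precisely because $\alpha=p-1$ — turns both weights into a multiple of $\rho^{\,p-1}$ and thus reduces matters to the isotropic situation, where this Pólya–Szegő-type inequality is classical.

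Next I would split the integral over $\{u\le1\}$ and $\{u>1\}$. On the sublevel set the estimate is elementary: since $\varphi_p(t)=\sum_{j\ge p-1}t^j/j!$ begins at the power $k_0\ge p-1$, one has $\varphi_p(t)\le e^{\mu}\,t^{k_0}$ for $0\le t\le\mu$, so with $t=\mu u^{q}$ and $u\le1$ one gets $\varphi_p(\mu u^{q})\le e^{\mu}\mu^{k_0}u^{\,qk_0}\le e^{\mu}\mu^{k_0}u^{p}$ because $qk_0\ge\tfrac{p}{p-1}(p-1)=p$; integrating gives $\int_{\{u\le1\}}\varphi_p(\mu u^{q})\,\mathrm{d}\lambda_\theta\le e^{\mu}\mu^{k_0}\|u\|^p_{L^p_\theta}$. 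On the superlevel set, monotonicity gives $\{u>1\}=(0,\rho)$ with $\rho<\infty$ (as $u\in L^p_\theta$); put $w:=u-1$, so $w\ge0$, $w(\rho^-)=0$, $w'=u'$, and therefore $w\in X^{1,p}_\rho(\alpha,\theta)$ with $\|w'\|_{L^p_\alpha}\le1$. Choosing $\varepsilon>0$ with $(1+\varepsilon)\mu\le\mu_{\alpha,\theta}$ — possible exactly because $\mu<\mu_{\alpha,\theta}$ — and using the elementary bound $(a+1)^{q}\le(1+\varepsilon)a^{q}+C_\varepsilon$ for $a\ge0$, one has $\mu u^{q}=\mu(w+1)^{q}\le(1+\varepsilon)\mu\,w^{q}+\mu C_\varepsilon$ on $(0,\rho)$, hence, using $\varphi_p(t)\le e^{t}$,
\[
\int_0^{\rho}\varphi_p(\mu u^{q})\,\mathrm{d}\lambda_\theta\le e^{\mu C_\varepsilon}\int_0^{\rho}e^{(1+\varepsilon)\mu\,w^{q}}\,\mathrm{d}\lambda_\theta\le c\,e^{\mu C_\varepsilon}\,|B_\rho|_\theta,
\]
the last inequality being Theorem~A(ii) applied to $w$ on $B_\rho$ (legitimate since $\alpha=p-1\ge1$ because $p\ge2$, and $(1+\varepsilon)\mu\le\mu_{\alpha,\theta}$, with $c$ independent of $\rho$). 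Since $|B_\rho|_\theta=\lambda_\theta(\{u>1\})\le\int_{\{u>1\}}u^{p}\,\mathrm{d}\lambda_\theta\le\|u\|^p_{L^p_\theta}$, adding the two contributions yields the claimed inequality with $C_{p,\mu,\theta}=e^{\mu}\mu^{k_0}+c\,e^{\mu C_\varepsilon}$. For the sharpness assertion I would plug in the Moser-type concentrating functions $u_k$ from the sharpness part of Theorem~A, supported in a fixed ball $B_{R_0}$ with $\|u_k'\|_{L^p_\alpha}\le1$: these satisfy $\|u_k\|_{L^p_\theta}\to0$, while $\int_0^\infty\varphi_p(\mu_{\alpha,\theta}|u_k|^{q})\,\mathrm{d}\lambda_\theta=\int_{B_{R_0}}\big(e^{\mu_{\alpha,\theta}|u_k|^{q}}-1\big)\,\mathrm{d}\lambda_\theta-o(1)$ stays bounded below by a fixed positive constant (the plateau of $u_k$ alone contributes a fixed amount), so the ratio in the inequality blows up whenever $\mu\ge\mu_{\alpha,\theta}$.

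The hard part is the symmetrization: justifying that the weighted rearrangement does not increase $\|u'\|_{L^p_\alpha}$ in the anisotropic regime $\alpha\neq\theta$ — this is exactly where $\alpha=p-1$ enters decisively, since it is the only exponent for which the normalization $\rho=r^{(\theta+1)/p}$ restores the isotropic (hence classical) situation. If one prefers to avoid both symmetrization and Theorem~A, a self-contained route is the substitution $r=e^{-t}$, under which $\|u'\|^p_{L^p_\alpha}$ becomes $\omega_\alpha\int_{\mathbb{R}}|v'|^p\,\mathrm{d}t$ and $\mathrm{d}\lambda_\theta$ becomes $\omega_\theta e^{-(\theta+1)t}\,\mathrm{d}t$, after which one runs Moser's level decomposition directly on $\mathbb{R}$; the truncation argument above is shorter.
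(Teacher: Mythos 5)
Your proof is correct, and the strategy---reduce to nonincreasing $u$ via weighted P\'{o}lya--Szeg\"{o}, split at a level set, bound the sub-level contribution by the elementary pointwise estimate $\varphi_p(\mu|u|^{p/(p-1)})\le C|u|^{p}$ there (using $k_0\ge p-1$), and on the super-level set shift $u$ to a function in $X^{1,p}_{\rho}(\alpha,\theta)$ and apply Theorem~\ref{TheoremA} together with a Chebyshev bound for $|B_\rho|_\theta$---is essentially the same one the paper runs in Section~\ref{sec2} for Theorem~\ref{thm-AT}, whose upper bound re-derives this inequality (see Remark~\ref{remark-final}). The only substantive difference is that you truncate at the fixed level $1$, whereas the paper truncates at the $\mu$-dependent level $(1-(\mu/\mu_{\alpha,\theta})^{p-1})^{1/p}$ so as to extract the precise asymptotics as $\mu\to\mu_{\alpha,\theta}$; for the cruder bound asserted here your simplification is adequate, and your sketch of the weighted symmetrization via the substitution $\rho=r^{(\theta+1)/p}$ is a pleasant self-contained alternative to the paper's appeal to \cite{AbreFern,Alvino2017}.
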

Theorem~\ref{AT-PAMS} is the fractional dimensions counterpart of the result in  S.Adachi and K. Tanaka \cite{ADTA2000}. We also refer to \cite{CAO92,DoTM,Panda} concerning the related work for the classical Sobolev spaces. Our first result in this paper yields a precise asymptotics result on the above inequality. 
\begin{theorem}\label{thm-AT}
Assume $p\ge 2$, $\alpha=p-1$  and $\theta\ge 0$. For any $
0\le\mu<\mu_{\alpha,\theta}$, we denote
\begin{equation}\nonumber
TMSC(\mu,\alpha,\theta)=\sup_{\|u^{\prime}\|_{L^{p}_{\alpha}}\le 1}\frac{1}{\|u\|^{p}_{L^{p}_{\theta}}}\int_{0}^{\infty}\varphi_{p}\left(\mu |u|^{\frac{p}{p-1}}\right)\mathrm{d}\lambda_{\theta}.
\end{equation}
Then there exist positive constants $c(\alpha,
\theta)$ and $C(\alpha,\theta)$ such that, when $\mu$ is close enough to $\mu_{\alpha,\theta}$ 
\begin{equation}\nonumber
\frac{c(\alpha,
\theta)}{1-\left(\frac{\mu}{
\mu_{\alpha,
\theta}}\right)^{p-1}}\le TMSC(\mu,\alpha,\theta)\le \frac{C(\alpha,
\theta)}{1-\left(\frac{\mu}{
\mu_{\alpha,
\theta}}\right)^{p-1}}.
\end{equation}
Moreover, the constant $\mu_{\alpha,\theta}$ is sharp in the sense that $TMSC(\mu_{\alpha,
\theta},\alpha,\theta)=\infty$.
\end{theorem}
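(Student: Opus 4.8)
The plan is to prove the two-sided bound by exploiting the scale invariance of the functional together with a comparison to the classical Adachi--Tanaka-type constant. For the upper bound, I would use the elementary inequality $\varphi_p(\mu t^{p/(p-1)}) \le (\mu/\mu_{\alpha,\theta})^{p-1}\,\varphi_p(\mu_{\alpha,\theta}\, t^{p/(p-1)})$ — which follows because each term $t^j/j!$ with $j\ge p-1$ in the series \eqref{exp-frac} satisfies $\mu^{j(p-1)/p}\le \mu^{p-1}\mu_{\alpha,\theta}^{(j(p-1)/p)-(p-1)}\cdot(\text{const})$ when $\mu\le\mu_{\alpha,\theta}$ — no wait, the honest route is to write $\mu|u|^{p/(p-1)} = \mu_{\alpha,\theta}|v|^{p/(p-1)}$ with $v = (\mu/\mu_{\alpha,\theta})^{(p-1)/p} u$, so $\|v'\|_{L^p_\alpha}\le (\mu/\mu_{\alpha,\theta})^{(p-1)/p}<1$, and then apply Theorem~\ref{TheoremA}(ii) (in the form on $(0,R)$ for finite $R$) after splitting the integral over $(0,\rho)$ and $(\rho,\infty)$. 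Concentrating the analysis near the threshold, one truncates: on the complement of a large ball one controls $\int \varphi_p$ by the tail of the $L^p_\theta$ norm using the embedding \eqref{TM-compactembeddings}, while on a fixed ball $B_\rho$ one rescales to land inside the sharp bounded inequality \eqref{TM1} with parameter $\mu<\mu_{\alpha,\theta}$; the $1/(1-(\mu/\mu_{\alpha,\theta})^{p-1})$ factor emerges from tracking how the admissible radius $\rho$ must grow, equivalently from summing a geometric series in the Dirichlet-energy defect.

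For the lower bound, the standard device is to test $TMSC(\mu,\alpha,\theta)$ against an explicit family of Moser-type functions adapted to the weight $r^\theta$. Concretely I would take, for a large parameter $n$,
\begin{equation}\nonumber
u_n(r)=\begin{cases} c_n, & 0\le r\le e^{-n},\\[2pt] (\text{logarithmic interpolation}), & e^{-n}\le r\le 1,\\[2pt] 0, & r\ge 1,\end{cases}
\end{equation}
normalized so that $\|u_n'\|_{L^p_\alpha}=1$; with $\alpha=p-1$ the weight makes $\|u_n'\|_{L^p_\alpha}^p$ a multiple of $\int r^{-1}|u_n'|^p\,dr$, so the extremal profile is the usual linear-in-$\log r$ Moser function and $c_n \sim (\text{const})\, n^{(p-1)/p}$. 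Plugging in, $\varphi_p(\mu c_n^{p/(p-1)})\approx \exp(\mu\, \text{const}\, n)$ on the small ball $B_{e^{-n}}$ of $\lambda_\theta$-measure $\sim e^{-(\theta+1)n}$, so the ball contributes $\exp((\mu\,\text{const}-(\theta+1))n)$. Choosing the constant so that $\mu_{\alpha,\theta}\,\text{const}=(\theta+1)$ — which is exactly how $\mu_{\alpha,\theta}=(\theta+1)\omega_\alpha^{1/\alpha}$ is calibrated — the exponent becomes $((\mu/\mu_{\alpha,\theta})-1)(\theta+1)n$, and summing (or integrating) over $n$, or rather optimizing $n\sim 1/(1-\mu/\mu_{\alpha,\theta})$, yields the lower bound of order $1/(1-(\mu/\mu_{\alpha,\theta})^{p-1})$ after dividing by $\|u_n\|_{L^p_\theta}^p$, which stays bounded above and below. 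The sharpness claim $TMSC(\mu_{\alpha,\theta},\alpha,\theta)=\infty$ is then immediate: the same family with $\mu=\mu_{\alpha,\theta}$ makes the ball contribution $\sim 1$ per scale while $\|u_n\|_{L^p_\theta}^p\to 0$, or alternatively one invokes the sharpness already recorded in Theorem~\ref{AT-PAMS}.

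The main obstacle I anticipate is making the upper bound genuinely uniform as $\mu\uparrow\mu_{\alpha,\theta}$ with the correct rate, rather than merely finite for each $\mu$. Theorem~\ref{AT-PAMS} only asserts finiteness; to extract the blow-up rate one needs a quantitative concentration-compactness estimate — showing that a nearly-extremal $u$ either concentrates at the origin, where a Carleson--Chang / Lions-type argument gives the geometric-series gain in the energy defect, or spreads out, where the subcritical embedding gives a bound with no blow-up. Controlling the transition between these regimes, and in particular verifying that the contribution of the "neck" region where $|u|$ is of intermediate size does not spoil the power $p-1$ in the denominator, is the delicate point; here the precise form of $\varphi_p$ (dropping the first $k_0$ Taylor terms) is what keeps the neck integral summable. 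A secondary technical nuisance is that one works on $(0,\infty)$, so the radial lemma / decay estimates for functions in $X^{1,p}_\infty(\alpha,\theta)$ must be invoked to justify truncation at a large radius without losing the sharp constant.
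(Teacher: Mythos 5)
Your lower-bound argument is essentially the paper's: the Moser-type sequence, the explicit computation $\|u_n\|_{L^p_\theta}^p \sim c/n$, and the optimization $n \sim (1-\mu/\mu_{\alpha,\theta})^{-1}$ are exactly what the proof does, and your sharpness observation ($\|u_n\|_{L^p_\theta}^p\to0$ while the ball contribution stays bounded below) is also correct.

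The genuine gap is in the upper bound, and you identify it yourself: ``Theorem~\ref{AT-PAMS} only asserts finiteness; to extract the blow-up rate one needs a quantitative\ldots estimate.'' The mechanism you then reach for --- concentration-compactness, Carleson--Chang, Lions-type neck analysis --- is not how the rate is obtained, and as sketched it does not close. The substitution $v = (\mu/\mu_{\alpha,\theta})^{(p-1)/p}u$ moves you strictly inside the constraint set but Theorem~\ref{TheoremA} gives a $\mu$-independent bound there, so you lose the dependence you need; splitting at a fixed radius $\rho$ and ``tracking how $\rho$ must grow'' is left unspecified and is not actually the source of the $(1-(\mu/\mu_{\alpha,\theta})^{p-1})^{-1}$ factor. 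The paper's device is much more elementary and is a \emph{level-set} (not radial) split: after using P\'olya--Szeg\"o to take $u$ non-increasing with $\|u'\|_{L^p_\alpha}=\|u\|_{L^p_\theta}=1$, one sets $A_u=\{r: |u(r)|^p > 1-(\mu/\mu_{\alpha,\theta})^{p-1}\}=(0,R_u)$. On the complement $\{u\le 1\}$ one uses the crude polynomial bound $\varphi_p(\mu|t|^{p/(p-1)})\le e^\mu|t|^p$ and the normalization $\|u\|_{L^p_\theta}=1$. On $A_u$, Chebyshev's inequality gives $|B_{R_u}|_\theta \le (1-(\mu/\mu_{\alpha,\theta})^{p-1})^{-1}$ --- this is exactly where the blow-up factor enters, purely from the measure of the superlevel set, with no neck analysis. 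Then one subtracts the threshold value, $v = u - (1-(\mu/\mu_{\alpha,\theta})^{p-1})^{1/p}$, so $v\in X^{1,p}_{R_u}$ with $\|v'\|_{L^p_\alpha}\le1$, and the convexity inequality of Lemma~\ref{aux-lemma-ineq} with $\epsilon = (\mu_{\alpha,\theta}/\mu)^p-1$ gives precisely $|u|^{p/(p-1)} \le (\mu_{\alpha,\theta}/\mu)|v|^{p/(p-1)} + 1$, reducing the integral on $A_u$ to the bounded-domain inequality \eqref{TM1} at the sharp exponent $\mu_{\alpha,\theta}$, uniformly in $\mu$. Without this level-set decomposition and the associated translation/convexity step, your sketch does not produce the stated rate.
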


\noindent One of the goals of this paper is to investigate the critical regime $\mu=\mu_{\alpha,\theta}$. In this case, we will firstly prove the following:
\begin{theorem} \label{thm-MR}
Assume $p\ge 2$, $\alpha=p-1$  and $\theta\ge 0$. For any $
0\le\sigma\le \mu_{\alpha,\theta}$, we denote
\begin{equation}\nonumber
TMC(\sigma, \alpha,\theta)=\sup_{\|u\|\le 1}\int_{0}^{\infty}\varphi_{p}\left(\sigma|u|^{\frac{p}{p-1}}\right)\mathrm{d}\lambda_{\theta}.
\end{equation}
Then $TMC(\sigma, \alpha,\theta)$ is finite. The constant $\mu_{\alpha,\theta}$ is sharp. In addition, we have the following identity
\begin{equation}\label{equ-identity}
TMC(\sigma, \alpha,\theta)=\sup_{\mu \in (0, \sigma)}\left(\frac{1-\left(\frac{\mu}{\sigma}\right)^{p-1}}{\left(\frac{\mu}{\sigma}\right)^{p-1}}\right)TMSC(\mu,\alpha,\theta),\;\;\mbox{for all}\;\; \sigma\le \mu_{\alpha,\theta}.
\end{equation}
\end{theorem}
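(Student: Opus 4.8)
The cornerstone is the identity~\eqref{equ-identity}; once it is in hand, both the finiteness of $TMC$ and the sharpness of $\mu_{\alpha,\theta}$ will follow from it together with Theorems~\ref{thm-AT} and~\ref{AT-PAMS}. Everything rests on the scaling invariance peculiar to the Trudinger--Moser regime $\alpha=p-1$: for $\lambda>0$ and $u_\lambda(r):=u(\lambda r)$, a change of variables gives $\|u_\lambda'\|_{L^p_\alpha}=\|u'\|_{L^p_\alpha}$, while $\|u_\lambda\|_{L^p_\theta}^p=\lambda^{-(\theta+1)}\|u\|_{L^p_\theta}^p$ and $\int_0^\infty\varphi_p(\mu|u_\lambda|^{p/(p-1)})\,\mathrm{d}\lambda_\theta=\lambda^{-(\theta+1)}\int_0^\infty\varphi_p(\mu|u|^{p/(p-1)})\,\mathrm{d}\lambda_\theta$. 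I would also record the elementary homogeneity of $\varphi_p$ from~\eqref{exp-frac}: $\varphi_p(ct)\ge c^{p-1}\varphi_p(t)$ for $c\ge 1$ and $\varphi_p(ct)\le c^{p-1}\varphi_p(t)$ for $0\le c\le 1$. The first of these shows that in $TMSC(\mu,\alpha,\theta)$ one may restrict the supremum to $u$ with $\|u'\|_{L^p_\alpha}=1$ (replacing $u$ by $u/\|u'\|_{L^p_\alpha}$ does not decrease the quotient); such $u$ automatically satisfy $0<\|u\|_{L^p_\theta}<\infty$, and $\mu\mapsto TMSC(\mu,\alpha,\theta)$ is non-decreasing since $\varphi_p$ is. The second homogeneity estimate is reserved for the end.

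To get ``$\le$'' in~\eqref{equ-identity}, take any nonzero $u\in X^{1,p}_{\infty}(\alpha,\theta)$ with $\|u\|\le 1$ and set $a:=\|u'\|_{L^p_\alpha}\in(0,1)$ (the value $a=1$ would force $\|u\|_{L^p_\theta}=0$, hence $u\equiv0$). Put $w:=u/a$ and $\mu:=\sigma a^{p/(p-1)}\in(0,\sigma)$, so that $(\mu/\sigma)^{p-1}=a^p$ and $\sigma|u|^{p/(p-1)}=\mu|w|^{p/(p-1)}$. Since $\|w'\|_{L^p_\alpha}=1$ and $\|w\|_{L^p_\theta}^p=a^{-p}\|u\|_{L^p_\theta}^p\le a^{-p}(1-a^p)$, the definition of $TMSC$ gives
\[
\int_0^\infty\varphi_p\big(\sigma|u|^{p/(p-1)}\big)\,\mathrm{d}\lambda_\theta=\int_0^\infty\varphi_p\big(\mu|w|^{p/(p-1)}\big)\,\mathrm{d}\lambda_\theta\le TMSC(\mu,\alpha,\theta)\,\|w\|_{L^p_\theta}^p\le\frac{1-(\mu/\sigma)^{p-1}}{(\mu/\sigma)^{p-1}}\,TMSC(\mu,\alpha,\theta),
\]
and taking the supremum over such $u$ yields $TMC(\sigma,\alpha,\theta)\le\sup_{\mu\in(0,\sigma)}\frac{1-(\mu/\sigma)^{p-1}}{(\mu/\sigma)^{p-1}}TMSC(\mu,\alpha,\theta)$.

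For ``$\ge$'', fix $\mu\in(0,\sigma)$ and $w$ with $\|w'\|_{L^p_\alpha}=1$, set $t:=(\mu/\sigma)^{(p-1)/p}\in(0,1)$, and choose $\lambda>0$ by $\lambda^{\theta+1}=(\mu/\sigma)^{p-1}\|w\|_{L^p_\theta}^p\big/\big(1-(\mu/\sigma)^{p-1}\big)$. Using the scaling relations, the function $u$ given by $u(r):=t\,w(\lambda r)$ satisfies $\|u'\|_{L^p_\alpha}^p+\|u\|_{L^p_\theta}^p=t^p\big(1+\lambda^{-(\theta+1)}\|w\|_{L^p_\theta}^p\big)=1$, so $u$ is admissible for $TMC(\sigma,\alpha,\theta)$; moreover $\sigma t^{p/(p-1)}=\mu$, hence
\[
\int_0^\infty\varphi_p\big(\sigma|u|^{p/(p-1)}\big)\,\mathrm{d}\lambda_\theta=\lambda^{-(\theta+1)}\int_0^\infty\varphi_p\big(\mu|w|^{p/(p-1)}\big)\,\mathrm{d}\lambda_\theta=\frac{1-(\mu/\sigma)^{p-1}}{(\mu/\sigma)^{p-1}}\cdot\frac{1}{\|w\|_{L^p_\theta}^p}\int_0^\infty\varphi_p\big(\mu|w|^{p/(p-1)}\big)\,\mathrm{d}\lambda_\theta.
\]
Taking the supremum over all $w$ with $\|w'\|_{L^p_\alpha}=1$ (which by the normalization above computes $TMSC(\mu,\alpha,\theta)$) and then over $\mu\in(0,\sigma)$ gives the reverse inequality, proving~\eqref{equ-identity}. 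Performed for an arbitrary $\sigma>0$, the same construction shows $TMC(\sigma,\alpha,\theta)\ge\frac{1-(\mu/\sigma)^{p-1}}{(\mu/\sigma)^{p-1}}TMSC(\mu,\alpha,\theta)$ for each $\mu\in(0,\sigma)$; choosing $\mu\in(\mu_{\alpha,\theta},\sigma)$ when $\sigma>\mu_{\alpha,\theta}$ and invoking $TMSC(\mu,\alpha,\theta)=\infty$ (Theorem~\ref{AT-PAMS}) shows $TMC(\sigma,\alpha,\theta)=\infty$ there, which is the sharpness of $\mu_{\alpha,\theta}$.

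Finally, the finiteness of $TMC(\sigma,\alpha,\theta)$ for $0\le\sigma\le\mu_{\alpha,\theta}$ is read off~\eqref{equ-identity} by bounding $g(\mu):=\frac{1-(\mu/\sigma)^{p-1}}{(\mu/\sigma)^{p-1}}TMSC(\mu,\alpha,\theta)$ on $(0,\sigma)$. Fixing $\mu_0\in(0,\mu_{\alpha,\theta})$, one has $TMSC(\mu_0,\alpha,\theta)<\infty$ by Theorem~\ref{AT-PAMS}; on $(0,\mu_0]$ the second homogeneity estimate gives $TMSC(\mu,\alpha,\theta)\le(\mu/\mu_0)^{p-1}TMSC(\mu_0,\alpha,\theta)$, so $g(\mu)\le(\sigma/\mu_0)^{p-1}TMSC(\mu_0,\alpha,\theta)$. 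On $[\mu_0,\sigma)$ the prefactor is bounded, and $TMSC(\mu,\alpha,\theta)$ is bounded there either by monotonicity together with $TMSC(\sigma,\alpha,\theta)<\infty$ when $\sigma<\mu_{\alpha,\theta}$, or, when $\sigma=\mu_{\alpha,\theta}$, by the estimate $TMSC(\mu,\alpha,\theta)\le C(\alpha,\theta)/(1-(\mu/\mu_{\alpha,\theta})^{p-1})$ of Theorem~\ref{thm-AT} near $\mu_{\alpha,\theta}$ (which makes $g(\mu)\le C(\alpha,\theta)(\mu_{\alpha,\theta}/\mu)^{p-1}$ bounded there) combined with monotonicity on the remaining compact middle interval. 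Hence $g$ is bounded and $TMC(\sigma,\alpha,\theta)=\sup_{(0,\sigma)}g<\infty$. I expect the main obstacle to be precisely this last step, especially the critical case $\sigma=\mu_{\alpha,\theta}$, where the vanishing factor $1-(\mu/\mu_{\alpha,\theta})^{p-1}$ must exactly offset the singular growth of $TMSC$ supplied by Theorem~\ref{thm-AT}; apart from that, the only real care needed is the (routine) verification of the change-of-variables scaling identities that drive the whole argument.
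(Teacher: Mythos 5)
Your proof is correct and rests on the same two pillars as the paper's: the scaling $u\mapsto t\,w(\lambda r)$ that trades off $\|u'\|_{L^p_\alpha}$ against $\|u\|_{L^p_\theta}$ at fixed $\|u'\|_{L^p_\alpha}$ (exploiting $\alpha=p-1$), and the homogeneity inequalities \eqref{rts-exp} together with the upper $TMSC$ bound of Theorem~\ref{thm-AT} as strengthened in Remark~\ref{remark-final}. What differs is the order of operations. The paper proves finiteness of $TMC$ directly by a two-case analysis on $\vartheta=\|u'\|_{L^p_\alpha}$ (the split at $\vartheta=1/2$), then proves sharpness by exhibiting the Moser sequence, and only afterwards proves the identity \eqref{equ-identity}, where the ``$\ge$'' half is Lemma~\ref{AT-MT} and the ``$\le$'' half is handled via a maximizing sequence. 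You invert this: you establish \eqref{equ-identity} first — and your ``$\le$'' argument is in fact leaner than the paper's, since applying the rescaling to an arbitrary admissible $u$ with $a:=\|u'\|_{L^p_\alpha}$ avoids any mention of a maximizing sequence — and then read off finiteness by bounding $g(\mu)=\frac{1-(\mu/\sigma)^{p-1}}{(\mu/\sigma)^{p-1}}TMSC(\mu,\alpha,\theta)$ on $(0,\sigma)$ with the help of the two-sided $TMSC$ estimate of Theorem~\ref{thm-AT}, and read off sharpness by taking $\mu\in(\mu_{\alpha,\theta},\sigma)$ in the ``$\ge$'' half when $\sigma>\mu_{\alpha,\theta}$. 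Your route is tidier and makes clear that the entire theorem is a corollary of the identity plus the $TMSC$ asymptotics; the paper's direct case analysis has the modest advantage of producing an explicit numerical bound for $TMC$ without needing to optimize $g$ over $(0,\sigma)$. Both are sound, and the underlying change-of-variable computations are identical.
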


For  the classical  Sobolev spaces, the critical supremum  $TMC(\sigma, \alpha,\theta)$ was first investigated by B.~Ruf in  \cite{BRuf} and Y. Li and  B.~Ruf \cite{BRufLi}. There has been a growing interest in this kind of inequalities during the last decades, and a wide literature is available, see for instance \cite{Cassani,LamLuIbero, LamLuANS, Ishi,LamLu,Ishi2} and the references therein.  We note  that the boundedness of  $TMC(\sigma, \alpha,\theta)$  has already been investigated in \cite{AbreFern}.  In this work we give a new proof for the boundedness  which enables  in particular to get a useful relation between  $TMSC(\sigma, \alpha,\theta)$ and $TMC(\sigma, \alpha,\theta)$ given by \eqref{equ-identity}.

Another interesting question about the  supremum $TMSC(\mu,\alpha,\theta)$ and  $TMC(\sigma, \alpha,\theta)$, and for Trudinger-Moser inequalities in general,  is whether extremal functions exist or not. Inspired by recent approaches in  \cite{Cassani,LamLuIbero, LamLuAdv,LamLuANS}, we will employ the identity \eqref{equ-identity}  to investigate this question. Firstly, on the subcritical supremum $TMSC(\mu,\alpha,\theta)$ we are able to prove the following:
\begin{theorem}\label{thm-AT-Maximizers}
Assume that $\alpha, p$  and $\theta$ satisfy the assumption of Theorem~\ref{thm-AT}. Then the fractional subcritical   supremum $TMSC(\mu,\alpha,\theta)$ is attained.
\end{theorem}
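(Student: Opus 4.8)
The plan is to apply the direct method to a maximizing sequence, the only genuine difficulty being to rule out loss of compactness. Fix $0<\mu<\mu_{\alpha,\theta}$ (for $\mu=0$ the supremum is $0$, attained trivially) and choose $(u_n)\subset X^{1,p}_{\infty}(\alpha,\theta)$ with $\|u_n'\|_{L^p_{\alpha}}\le1$ and $J_\mu(u_n)\to T:=TMSC(\mu,\alpha,\theta)$, where $J_\mu(u)=\|u\|_{L^p_\theta}^{-p}\int_0^\infty\varphi_p(\mu|u|^{p/(p-1)})\,\mathrm{d}\lambda_\theta$. The first step is to exploit the scaling structure available because $\alpha=p-1$: under a dilation $u(r)\mapsto u(\lambda r)$ the norm $\|u'\|_{L^p_\alpha}$ is unchanged while $\|u\|_{L^q_\theta}^q$ is multiplied by $\lambda^{-(\theta+1)}$ for every $q$, so $J_\mu$ is dilation invariant; dilating each $u_n$ we may therefore assume $\|u_n\|_{L^p_\theta}=1$. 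Then $(u_n)$ is bounded in the reflexive Banach space $X^{1,p}_{\infty}(\alpha,\theta)$ (it embeds isometrically onto a closed subspace of $L^p_\theta\times L^p_\alpha$), and after passing to a subsequence we obtain $u_n\rightharpoonup u$ in $X^{1,p}_{\infty}(\alpha,\theta)$, $u_n\to u$ a.e. on $(0,\infty)$, and --- crucially --- $u_n\to u$ in $L^q_\theta$ for every $q\in(p,\infty)$ by the compact embeddings \eqref{TM-compactembeddings}.

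The second step is an algebraic decomposition of the exponential term. Expanding the series \eqref{exp-frac} and integrating, one can write
\[
\int_0^\infty\varphi_p\big(\mu|u|^{p/(p-1)}\big)\,\mathrm{d}\lambda_\theta=\beta\,\|u\|_{L^p_\theta}^{p}+\Phi_\mu(u),
\]
where $\beta=\mu^{p-1}/(p-1)!$ when $p$ is an integer and $\beta=0$ otherwise, and $\Phi_\mu(u)=\sum_j\tfrac{\mu^j}{j!}\|u\|_{L^{pj/(p-1)}_\theta}^{pj/(p-1)}$ gathers the remaining terms of \eqref{exp-frac}; the point is that every exponent $pj/(p-1)$ occurring in $\Phi_\mu$ is strictly larger than $p$, so $\Phi_\mu$ only involves weighted $L^q_\theta$-norms in the range of the compact embedding. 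I would then show $\Phi_\mu(u_n)\to\Phi_\mu(u)$: each individual term converges by the $L^q_\theta$-convergence, and the tail $\sum_{j\ge N}$ is controlled uniformly in $n$ by applying Theorem~\ref{AT-PAMS} with an intermediate exponent $\mu<\mu'<\mu_{\alpha,\theta}$, which gives $\sum_j\tfrac{(\mu')^j}{j!}\|u_n\|_{L^{pj/(p-1)}_\theta}^{pj/(p-1)}\le C_{p,\mu',\theta}$ and hence $\sum_{j\ge N}\tfrac{\mu^j}{j!}\|u_n\|_{L^{pj/(p-1)}_\theta}^{pj/(p-1)}\le(\mu/\mu')^N C_{p,\mu',\theta}\to0$; by Fatou the same bound holds for $u$, so $\Phi_\mu(u)<\infty$.

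With these two ingredients the argument closes. First, $u\not\equiv0$: otherwise $\Phi_\mu(u_n)\to\Phi_\mu(0)=0$, so $\int_0^\infty\varphi_p(\mu|u_n|^{p/(p-1)})\,\mathrm{d}\lambda_\theta=\beta\|u_n\|_{L^p_\theta}^{p}+\Phi_\mu(u_n)\to\beta$; but this integral equals $J_\mu(u_n)\to T$, forcing $T=\beta$, which is absurd since $J_\mu(v)=\beta+\Phi_\mu(v)/\|v\|_{L^p_\theta}^p$ for admissible $v$ with $\Phi_\mu(v)>0$ whenever $v\ne0$, whence $T=\beta+\sup_v\Phi_\mu(v)/\|v\|_{L^p_\theta}^p>\beta$. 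Thus $0<\|u\|_{L^p_\theta}\le1$, the last inequality by weak lower semicontinuity, and likewise $\|u'\|_{L^p_\alpha}\le\liminf_n\|u_n'\|_{L^p_\alpha}\le1$, so $u$ is admissible. Passing to the limit in the decomposition gives $T=\beta+\Phi_\mu(u)$, whence
\[
J_\mu(u)=\beta+\frac{\Phi_\mu(u)}{\|u\|_{L^p_\theta}^{p}}\;\ge\;\beta+\Phi_\mu(u)=T,
\]
and since $\|u'\|_{L^p_\alpha}\le1$ also $J_\mu(u)\le T$; hence $J_\mu(u)=T$ and $u$ is a maximizer. (Equality forces $\|u\|_{L^p_\theta}=1$, so $u_n\to u$ in $L^p_\theta$ is in fact strong.)

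I expect the main obstacle to be exactly the non-vanishing step $u\ne0$, and it is there that subcriticality is indispensable: the hypothesis $\mu<\mu_{\alpha,\theta}$ is used precisely to leave room for a strictly larger subcritical exponent $\mu'$ in Theorem~\ref{AT-PAMS}, which makes the supercritical-order part $\Phi_\mu$ equi-integrable along the maximizing sequence. Concentration, the other standard obstruction in Trudinger--Moser extremal problems, cannot arise here for the very same reason --- this is the structural advantage of the subcritical regime over the critical one --- while the remaining ingredients (reflexivity, the compactness \eqref{TM-compactembeddings}, weak lower semicontinuity, and the $\alpha=p-1$ scaling) are routine.
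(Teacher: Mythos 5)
Your proof is correct, but the route you take to pass to the limit is genuinely different from the paper's, and in my view cleaner. The paper handles the maximizing sequence by splitting the line at a large radius $R$: outside $(0,R)$ it uses the radial decay (Lemma~\ref{radial lemma}) together with the pointwise bound $\varphi_p(\mu t^{p/(p-1)})-\tfrac{\mu^{k_0}}{k_0!}t^{k_0p/(p-1)}\le C\,\varphi_p(\mu t^{p/(p-1)})\,t^{p/(p-1)}$ and Theorem~\ref{AT-PAMS} to make the contribution $O(\epsilon^{p/(p-1)})$, while on $(0,R)$ it uses the local Trudinger--Moser bound of Theorem~\ref{TheoremA} to get an $L^q$-bound and then invokes Vitali's convergence theorem; finally it appeals to Brezis--Lieb to isolate the bottom-order term $\tfrac{\mu^{k_0}}{k_0!}\|u_n\|^{k_0p/(p-1)}_{L^{k_0p/(p-1)}_\theta}$. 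You instead integrate the power series \eqref{exp-frac} termwise, observe that every exponent $pj/(p-1)$ with $j>p-1$ lies strictly above $p$ and hence falls under the compact embeddings \eqref{TM-compactembeddings}, and control the tail $\sum_{j\ge N}$ uniformly in $n$ by comparing against an intermediate exponent $\mu<\mu'<\mu_{\alpha,\theta}$ in Theorem~\ref{AT-PAMS}, gaining a geometric factor $(\mu/\mu')^N$. This replaces Vitali, the radial lemma (for the tail), the pointwise estimate \eqref{phi-phiconj}, and Brezis--Lieb in one stroke, and reduces the limit passage to termwise convergence of a series with a uniformly summable tail. The non-vanishing step $u\ne 0$ is in both proofs traced back to the same strict inequality (your $T>\beta$, the paper's $TMSC(\mu,\alpha,\theta)>\mu^{k_0}/k_0!$), and the closing inequality $J_\mu(u)=\beta+\Phi_\mu(u)/\|u\|^p_{L^p_\theta}\ge\beta+\Phi_\mu(u)=T$ is just a transparent reformulation of the paper's final two displays (and, as you remark, forces $\|u\|_{L^p_\theta}=1$). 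What your approach buys is a proof that is shorter, avoids measure-theoretic machinery, and isolates precisely where subcriticality enters (room for the auxiliary $\mu'$); what it relies on, exactly as the paper does, is the compactness \eqref{TM-compactembeddings} at every exponent strictly above $p$.
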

By using  Theorem~\ref{thm-AT-Maximizers} and the identity \eqref{equ-identity}, we will first prove the following attainability result for the fractional critical supremum  $TMC(\sigma, \alpha,\theta)$.

\begin{theorem}\label{thm-MR-Maximizers}
Assume $\alpha, p$  and $\theta$ under the assumptions of Theorem~\ref{thm-MR}. 
\begin{description}
\item [$\mathrm{(i)}$] If $k_0>p-1$  and $0<\sigma<\mu_{\alpha,\theta}$  then $TMC(\sigma, \alpha,\theta)$ is attained.
\item [$\mathrm{(ii)}$] If $k_0=p-1$  and $0<\sigma<\mu_{\alpha,\theta}$ then $TMC(\sigma, \alpha,\theta)$ is attained, whenever
$
TMC(\sigma, \alpha,\theta)>\frac{\sigma^{p-1}}{(p-1)!}.
$
\end{description}
\end{theorem}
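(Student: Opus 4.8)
The plan is to transfer attainability from the subcritical problem to the critical one via the identity \eqref{equ-identity}. Fix $0<\sigma<\mu_{\alpha,\theta}$. The starting point is to analyze the function $\mu\mapsto g(\mu):=\bigl(1-(\mu/\sigma)^{p-1}\bigr)(\mu/\sigma)^{-(p-1)}\,TMSC(\mu,\alpha,\theta)$ on $(0,\sigma)$. By Theorem \ref{thm-AT-Maximizers} each $TMSC(\mu,\alpha,\theta)$ is attained, and by Theorem \ref{thm-AT} it is finite on $[0,\mu_{\alpha,\theta})$; one checks that $g$ is continuous on $(0,\sigma)$, that $g(\mu)\to 0$ as $\mu\to 0^{+}$ (the prefactor blows up like $(\sigma/\mu)^{p-1}$ but $TMSC(\mu,\alpha,\theta)\to \sigma^{p-1}/(p-1)!\cdot 0$—more precisely the leading term of $\varphi_p$ forces $TMSC(\mu,\alpha,\theta)=O(\mu^{p-1})$ since $\varphi_p(t)\le t^{k_0}/k_0!\cdot e^{t}$ and $k_0\ge p-1$), and that $g(\mu)\to 0$ again is false at $\mu\to\sigma^{-}$ only if $\sigma<\mu_{\alpha,\theta}$ keeps $TMSC$ bounded there, so the $1-(\mu/\sigma)^{p-1}$ factor kills it. Hence $g$ attains its supremum at some interior point $\mu_{*}\in(0,\sigma)$, i.e. the supremum in \eqref{equ-identity} is a maximum: $TMC(\sigma,\alpha,\theta)=g(\mu_{*})$.

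Next I would take a maximizer $w$ for $TMSC(\mu_{*},\alpha,\theta)$, so that $\|w'\|_{L^p_\alpha}\le 1$ and $\int_0^\infty \varphi_p(\mu_{*}|w|^{p/(p-1)})\,\mathrm d\lambda_\theta = TMSC(\mu_{*},\alpha,\theta)\,\|w\|_{L^p_\theta}^p$; WLOG $\|w'\|_{L^p_\alpha}=1$. The idea is to rescale $w$ so that it becomes an admissible competitor for the critical problem. Define $v=\lambda w$ with $\lambda>0$ chosen so that $\|v\|^p = \lambda^p(\|w\|_{L^p_\theta}^p + 1)=1$, i.e. $\lambda^p=(\|w\|_{L^p_\theta}^p+1)^{-1}$, and set $\sigma' = \mu_{*}\lambda^{-p/(p-1)} = \mu_{*}(\|w\|_{L^p_\theta}^p+1)^{1/(p-1)}$. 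Then $\sigma'|v|^{p/(p-1)}=\mu_{*}|w|^{p/(p-1)}$, so plugging $v$ into the critical functional at parameter $\sigma'$ gives exactly $\int_0^\infty\varphi_p(\mu_{*}|w|^{p/(p-1)})\,\mathrm d\lambda_\theta = TMSC(\mu_{*},\alpha,\theta)\,\|w\|_{L^p_\theta}^p$. A short computation with $\sigma'$ and $\mu_{*}$ shows this equals $g(\mu_{*})=TMC(\sigma,\alpha,\theta)$ precisely when $\sigma'=\sigma$, and indeed the extremality of $\mu_{*}$ in $g$ forces $\sigma'=\sigma$ (this is the Euler–Lagrange condition for the one-variable maximization over $\mu$). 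Therefore $v$ is admissible for $TMC(\sigma,\alpha,\theta)$ and realizes the supremum, so $v$ is a maximizer—modulo one subtlety.

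The subtlety, and the main obstacle, is that the sum in \eqref{equ-identity} may be attained "at infinity" rather than by the finite construction above: a priori the supremum over $\mu\in(0,\sigma)$ of $g(\mu)$ could fail to be attained, or the rescaling parameter $\lambda$ could degenerate ($\|w\|_{L^p_\theta}\to\infty$ or $\to 0$ along a sequence), which would prevent $v$ from being a genuine function in the unit ball. This is where the dichotomy $k_0>p-1$ versus $k_0=p-1$ enters. When $k_0>p-1$, the nonlinearity $\varphi_p$ has a super-quadratic leading power ($\varphi_p(t)\sim t^{k_0}/k_0!$ with $k_0\ge 2$... careful: $k_0\ge 1$ always, but $k_0>p-1\ge 1$ forces $k_0\ge 2$), which gives enough room to rule out the degenerate/vanishing scenarios by a Lions-type concentration-compactness argument on the weighted space, exactly as in the subcritical proof—so the maximizer always exists. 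When $k_0=p-1$ (which happens when $p$ is an integer, $k_0=p-1$), the leading term of $\varphi_p$ is $t^{p-1}/(p-1)!$, contributing $\sigma^{p-1}\|u\|_{L^p_\theta}^p/(p-1)!$ to the functional, which is controlled by the $\|u\|_{L^p_\theta}^p\le 1$ part of the norm constraint and can be "lost" in the limit to concentration; one only recovers a maximizer when the critical level strictly exceeds this threshold $\sigma^{p-1}/(p-1)!$, guaranteeing that a minimizing/maximizing sequence cannot concentrate to a point mass whose exponential-type energy collapses to the linear term. I would carry this out by: (1) extracting a maximizing sequence $u_n$ for $TMC(\sigma,\alpha,\theta)$, (2) passing to a weak limit $u_0$ in $X^{1,p}_\infty(\alpha,\theta)$ after symmetrization/monotone rearrangement (radial decreasing representatives), (3) using the compact embeddings \eqref{TM-compactembeddings} to get $u_n\to u_0$ in $L^q_\theta$ for $q\in(p,\infty)$ and hence termwise convergence of the series for $\varphi_p$ beyond the first term, (4) invoking the equivalence/identity to compare with the subcritical maximizers and rule out $u_0\equiv 0$ under the stated hypotheses, and (5) showing $\|u_0'\|_{L^p_\alpha}$ and $\|u_0\|_{L^p_\theta}$ saturate the constraint, so $u_0$ is the extremal. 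The delicate estimate—quantifying exactly how much exponential energy is lost under concentration and comparing it to $\sigma^{p-1}/(p-1)!$—is the crux of case (ii).
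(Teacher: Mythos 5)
Your overall strategy matches the paper's: use the identity \eqref{equ-identity} to reduce critical attainability to showing that the supremum over $\mu\in(0,\sigma)$ is attained at an interior point, then rescale the subcritical extremal to obtain a critical one. However, there are two concrete gaps in the execution.

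\textbf{Gap 1: the claimed asymptotics $g(\mu)\to 0$ as $\mu\to 0^{+}$ are false precisely when $k_0=p-1$.} You correctly observe that the leading term of $\varphi_p$ forces $TMSC(\mu,\alpha,\theta)=O(\mu^{k_0})$, but with the prefactor $\bigl(1-(\mu/\sigma)^{p-1}\bigr)(\mu/\sigma)^{-(p-1)}\sim(\sigma/\mu)^{p-1}$ the product is of order $\mu^{k_0-(p-1)}$; this tends to $0$ if $k_0>p-1$, but when $k_0=p-1$ (the hardest case, where $TMSC(\mu)=\frac{\mu^{p-1}}{(p-1)!}+o(\mu^{p-1})$ since $\|u\|_{L^p_\theta}=1$ in the normalization) the limit is $\frac{\sigma^{p-1}}{(p-1)!}$, not zero. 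Your first paragraph therefore only proves interior attainment in case (i). The entire point of the hypothesis $TMC(\sigma,\alpha,\theta)>\frac{\sigma^{p-1}}{(p-1)!}$ in case (ii) is to beat this nonzero boundary value; the paper isolates this in Lemma~\ref{MTcotaS}, proving $\limsup_{s\to0^+}$ of the product is $0$ when $k_0>p-1$ and is $\le\frac{\sigma^{p-1}}{(p-1)!}$ when $k_0=p-1$, using Lemma~\ref{TMLq} to control $\int e^{\mu|u|^{p/(p-1)}}|u|^q\,\mathrm{d}\lambda_\theta$ uniformly. You do eventually say that the $k_0=p-1$ case needs the threshold hypothesis, but you then propose to establish it by a concentration-compactness analysis of a maximizing sequence for the \emph{critical} problem, which is a genuinely different (and much heavier) route than the paper's direct estimate of the supremand near $s=0$. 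The paper's route avoids concentration-compactness entirely for Theorem~\ref{thm-MR-Maximizers}.

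\textbf{Gap 2: the rescaling $v=\lambda w$ does not work as written.} With only a multiplicative coefficient and no dilation, the induced parameter is $\sigma'=\mu_*(1+\|w\|^p_{L^p_\theta})^{1/(p-1)}$, and there is no ``Euler--Lagrange'' mechanism that forces $\sigma'=\sigma$: the critical maximization over $\mu$ is not differentiated against the choice of $w$, so interior optimality of $\mu_*$ imposes no constraint on $\|w\|_{L^p_\theta}$. The missing ingredient is a dilation: since $\alpha=p-1$, the quantity $\|u'\|_{L^p_\alpha}$ is dilation-invariant while $\|u\|^p_{L^p_\theta}$ and $\int\varphi_p$ both scale by $\tau^{-(\theta+1)}$, so the subcritical ratio is dilation-invariant and you can freely dial $\|w\|_{L^p_\theta}$. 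The paper uses exactly this: $v_\sigma(r)=(s_\sigma/\sigma)^{(p-1)/p}u_\sigma(\tau r)$ with $\tau$ chosen so that $\|v_\sigma\|^p_{L^p_\theta}=1-(s_\sigma/\sigma)^{p-1}$, giving $\|v_\sigma\|\le 1$ and $\int\varphi_p(\sigma|v_\sigma|^{p/(p-1)})\,\mathrm{d}\lambda_\theta=TMC(\sigma,\alpha,\theta)$. Your coefficient-only construction can be repaired by first dilating $w$ so that $\|w\|_{L^p_\theta}^p=\frac{1-(\mu_*/\sigma)^{p-1}}{(\mu_*/\sigma)^{p-1}}$, but as written the claim that extremality forces $\sigma'=\sigma$ is unjustified.

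Finally, you should also record the continuity of $\mu\mapsto TMSC(\mu,\alpha,\theta)$ on $(0,\mu_{\alpha,\theta})$, which you assert but do not prove; the paper devotes Lemma~\ref{f-continuous} to it (using Theorem~\ref{thm-AT-Maximizers}, Lemma~\ref{radial lemma}, Theorem~A, and Vitali convergence), and without it the interior attainment argument is incomplete.
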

Theorem~\ref{thm-MR-Maximizers} has already been obtained in \cite{AbreFern}, however our proof here is new and relies on the critical and subcritical equivalence given in Theorem~\ref{thm-MR}.  In addition, 
following \cite{Ishi2} we also are able to characterize precisely the attainability of $TMC(\sigma, \alpha,\theta)$ for the case $\mathrm{(ii)}$ above. In order to get this, we define the value $\sigma_{*}=\sigma_{*}(\alpha,\theta)\in [0, \mu_{\alpha,\theta})$ by
\begin{equation}\nonumber
\sigma_{*}=\inf\left\{\sigma\in (0,\mu_{\alpha,\theta})\;:\; TMC(\sigma, \alpha,\theta)\;\;\mbox{is attained}\right\}
\end{equation}
when $TMC(\sigma, \alpha,\theta)$ is attained for some $\sigma\in (0,\mu_{\alpha,\theta})$.  If $TMC(\sigma, \alpha,\theta)$ is not attained for any $\sigma\in (0,\mu_{\alpha,\theta})$ then we set $\sigma_{*}=\infty$.
\begin{theorem}\label{thm-MR-Maximizers-analyzed}
Assume that $k_0=p-1$  and $\alpha, \theta$ are as in Theorem~\ref{thm-MR}.  Suppose $\sigma_{*}<\mu_{\alpha,\theta}$. Then
\begin{description}
\item [$\mathrm{(i)}$] $ TMC(\sigma, \alpha,\theta)$ is  attained for $\sigma_{*}<\sigma<\mu_{\alpha,\theta}$.
\item [$\mathrm{(ii)}$] The function $\nu:(\sigma_{*},\mu_{\alpha,\theta})\rightarrow\mathbb{R}$ given by
$
\nu(\sigma)=\frac{(p-1)!}{\sigma^{p-1}}TMC(\sigma, \alpha,\theta)
$
 is strictly increasing. Moreover, by setting $TMC(0, \alpha,\theta)=0$, there holds
\begin{equation}\label{MTvalue}
TMC(\sigma, \alpha,\theta)\;\;\left\{\begin{aligned}
&=\frac{\sigma^{p-1}}{(p-1)!},\;\;&\mbox{for}&\;\; \sigma\in [0,\sigma_{*}] \\
&> \frac{\sigma^{p-1}}{(p-1)!},\;\;&\mbox{for }&\;\;  \sigma\in (\sigma_{*},\mu_{\alpha,\theta})
\end{aligned}\right.
\end{equation}
and in particular
\begin{equation}\label{inf-carach}
\sigma_{*}=\inf\left\{\sigma\in (0,\mu_{\alpha,\theta})\;:\; TMC(\sigma, \alpha,\theta)>\frac{\sigma^{p-1}}{(p-1)!} \right\}.
\end{equation}
\item [$\mathrm{(iii)}$] If $p>2$ we have $\sigma^*=0$ and thus $TMC(\sigma, \alpha,\theta)$ is attained for any $(0,\mu_{\alpha,\theta})$.
\end{description}
\end{theorem}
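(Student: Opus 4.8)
Throughout, fix $\alpha=p-1$ and $\theta\ge0$ with $p$ an integer $\ge2$ (so $k_0=p-1$), and write $N(\sigma):=TMC(\sigma,\alpha,\theta)$ and $S(\mu):=TMSC(\mu,\alpha,\theta)$. Since $k_0=p-1$ we have $\varphi_p(t)=\tfrac{t^{p-1}}{(p-1)!}+\sum_{j\ge p}\tfrac{t^{j}}{j!}$, hence $\int_0^\infty\varphi_p(\sigma|u|^{p/(p-1)})\,\mathrm d\lambda_\theta\ge\tfrac{\sigma^{p-1}}{(p-1)!}\|u\|_{L^p_\theta}^p$ for every admissible $u$, and \eqref{equ-identity} rewrites as
$$\nu(\sigma)=\frac{(p-1)!}{\sigma^{p-1}}\,N(\sigma)=(p-1)!\sup_{\mu\in(0,\sigma)}\Big(\frac1{\mu^{p-1}}-\frac1{\sigma^{p-1}}\Big)S(\mu),\qquad 0<\sigma\le\mu_{\alpha,\theta}.$$
The plan is to deduce everything from this formula together with Theorems~\ref{thm-AT-Maximizers} and \ref{thm-MR-Maximizers}$(\mathrm{ii})$. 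First I record three elementary facts. (a) $\nu$ is nondecreasing on $(0,\mu_{\alpha,\theta})$: for each fixed $\mu$ the factor $\mu^{-(p-1)}-\sigma^{-(p-1)}$ increases with $\sigma$ while $S(\mu)>0$, and the supremum is over a growing interval. (b) $\nu(\sigma)\ge1$: letting $\mu\to0^+$ in the displayed supremum and using $S(\mu)=\tfrac{\mu^{p-1}}{(p-1)!}+O(\mu^p)$ — which follows from Theorem~\ref{AT-PAMS} applied to the individual monomials of $\varphi_p$ — the bracketed quantity tends to $1$. (c) $N(\sigma)=\tfrac{\sigma^{p-1}}{(p-1)!}$ for every $\sigma\in(0,\sigma_*)$: otherwise $N(\sigma)>\tfrac{\sigma^{p-1}}{(p-1)!}$ and Theorem~\ref{thm-MR-Maximizers}$(\mathrm{ii})$ would produce a maximizer, contradicting $\sigma<\sigma_*$.

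The crux is the following monotonicity lemma: \emph{if $N(\sigma_0)$ is attained for some $\sigma_0\in(0,\mu_{\alpha,\theta})$, then there is $\mu_0\in(0,\sigma_0)$ with $\nu(\sigma_0)=(p-1)!\big(\mu_0^{-(p-1)}-\sigma_0^{-(p-1)}\big)S(\mu_0)$; consequently $\nu(\sigma)>\nu(\sigma_0)$ for all $\sigma\in(\sigma_0,\mu_{\alpha,\theta})$, and $\nu(\sigma)\ge(p-1)!\big(\mu_0^{-(p-1)}-\sigma^{-(p-1)}\big)S(\mu_0)\to\nu(\sigma_0)$ as $\sigma\uparrow\sigma_0$.} To prove it, let $u$ realize $N(\sigma_0)$. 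A scaling argument (replace $u$ by $u/\|u\|$ if $\|u\|<1$) forces $\|u\|=1$, and since $u\not\equiv0$ while $\lim_{r\to\infty}u(r)=0$ the number $s:=\|u\|_{L^p_\theta}^p$ lies in $(0,1)$. Set $v:=(1-s)^{-1/p}u$ (so $\|v'\|_{L^p_\alpha}=1$, $\|v\|_{L^p_\theta}^p=\tfrac{s}{1-s}$) and $\mu_0:=\sigma_0(1-s)^{1/(p-1)}\in(0,\sigma_0)$, noting $(\mu_0/\sigma_0)^{p-1}=1-s$. Then
$$N(\sigma_0)=\int_0^\infty\varphi_p\big(\mu_0|v|^{p/(p-1)}\big)\,\mathrm d\lambda_\theta\le S(\mu_0)\,\|v\|_{L^p_\theta}^p=S(\mu_0)\,\frac{1-(\mu_0/\sigma_0)^{p-1}}{(\mu_0/\sigma_0)^{p-1}}\le N(\sigma_0),$$
the last step by \eqref{equ-identity}; hence all inequalities are equalities, so $\mu_0$ attains the supremum in \eqref{equ-identity} at $\sigma_0$, which gives the displayed value of $\nu(\sigma_0)$. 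Feeding this same $\mu_0$ into the supremum defining $\nu(\sigma)$ and using $\mu_0^{-(p-1)}-\sigma^{-(p-1)}\gtrless\mu_0^{-(p-1)}-\sigma_0^{-(p-1)}$ according as $\sigma\gtrless\sigma_0$, together with $S(\mu_0)>0$, yields the strict inequality for $\sigma>\sigma_0$ and the stated lower bound for $\sigma<\sigma_0$.

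Now the theorem follows. By the lemma, if $N(\sigma_0)$ is attained then $\nu(\sigma)>\nu(\sigma_0)\ge1$ for all $\sigma\in(\sigma_0,\mu_{\alpha,\theta})$, hence by Theorem~\ref{thm-MR-Maximizers}$(\mathrm{ii})$ $N(\sigma)$ is attained and $N(\sigma)>\tfrac{\sigma^{p-1}}{(p-1)!}$ there. Since $\sigma_*<\mu_{\alpha,\theta}$ the attainability set $A$ is nonempty with $\inf A=\sigma_*$, so each $\sigma\in(\sigma_*,\mu_{\alpha,\theta})$ exceeds some point of $A$; therefore $N(\sigma)$ is attained on $(\sigma_*,\mu_{\alpha,\theta})$ — this is $(\mathrm{i})$ — and there $N(\sigma)>\tfrac{\sigma^{p-1}}{(p-1)!}$. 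For $\sigma_*<\sigma_1<\sigma_2<\mu_{\alpha,\theta}$ the point $\sigma_1$ lies in $A$, so the lemma gives $\nu(\sigma_2)>\nu(\sigma_1)$, i.e. $\sigma\mapsto\tfrac{(p-1)!}{\sigma^{p-1}}N(\sigma)$ is strictly increasing on $(\sigma_*,\mu_{\alpha,\theta})$, proving the first assertion of $(\mathrm{ii})$. Next, $\nu(\sigma_*)=1$: if $\nu(\sigma_*)>1$ then $\sigma_*>0$, Theorem~\ref{thm-MR-Maximizers}$(\mathrm{ii})$ makes $N(\sigma_*)$ attained, and the lower bound in the lemma gives $\nu(\sigma)\to\nu(\sigma_*)>1$ as $\sigma\uparrow\sigma_*$, so $\nu(\sigma)>1$ for some $\sigma<\sigma_*$ — contradicting (c). Combining $\nu(\sigma_*)=1$, fact (c), the convention $N(0)=0$, and the strict inequality on $(\sigma_*,\mu_{\alpha,\theta})$ gives \eqref{MTvalue}; and \eqref{inf-carach} is immediate since $\{\sigma\in(0,\mu_{\alpha,\theta}):N(\sigma)>\tfrac{\sigma^{p-1}}{(p-1)!}\}=(\sigma_*,\mu_{\alpha,\theta})$.

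For $(\mathrm{iii})$ I make fact (b) quantitative. Keeping the first two monomials of $\varphi_p$ gives, for all $\mu\in(0,\mu_{\alpha,\theta})$,
$$S(\mu)\ \ge\ \frac{\mu^{p-1}}{(p-1)!}+\frac{S_p}{p!}\,\mu^{p},\qquad S_p:=\sup_{\|w'\|_{L^p_\alpha}\le1}\frac{1}{\|w\|_{L^p_\theta}^{p}}\int_0^\infty|w|^{p^2/(p-1)}\,\mathrm d\lambda_\theta\ \in(0,\infty),$$
where $S_p<\infty$ by Theorem~\ref{AT-PAMS} (use the index-$p$ monomial of $\varphi_p$; here $p^2/(p-1)\ge p$). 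Inserting this into the formula for $\nu(\sigma)$ and retaining the term corresponding to a small $\mu$,
$$\nu(\sigma)\ \ge\ 1+\frac{S_p}{p}\,\mu-\frac{\mu^{p-1}}{\sigma^{p-1}}-\frac{S_p\,\mu^{p}}{p\,\sigma^{p-1}}.$$
When $p>2$ the exponents $p-1$ and $p$ both exceed $1$, so for all sufficiently small $\mu>0$ the right-hand side is $>1$; hence $\nu(\sigma)>1$, equivalently $N(\sigma)>\tfrac{\sigma^{p-1}}{(p-1)!}$, for every $\sigma\in(0,\mu_{\alpha,\theta})$, so by Theorem~\ref{thm-MR-Maximizers}$(\mathrm{ii})$ $TMC(\sigma,\alpha,\theta)$ is attained there and consequently $\sigma_*=0$. (For $p=2$ the same computation only yields $\sigma_*\le2/S_2$, consistent with $\sigma_*$ being positive.) The single delicate point is the equality–chain lemma: one must verify that a critical maximizer genuinely pins the optimal parameter into the \emph{open} interval $(0,\sigma_0)$ — this is exactly where $u\not\equiv0$ and the decay $u(r)\to0$ enter, to exclude $s\in\{0,1\}$ — and then use this both for strict monotonicity to the right and, via the accompanying lower bound, to pin down $\nu(\sigma_*)=1$ without circular reasoning; everything else is formal manipulation of \eqref{equ-identity} and the two attainability theorems already at hand.
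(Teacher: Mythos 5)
Your proof is correct, and it takes a genuinely different route from the paper's. The paper proves the theorem from two ingredients developed independently of the critical/subcritical identity: Lemma~\ref{vanishing-integer} establishes the lower bound $TMC(\sigma,\alpha,\theta)\ge\sigma^{p-1}/(p-1)!$ (strict for $p>2$) via Ishiwata's vanishing scaling $u_t(r)=t^{1/p}u(t^{1/(\theta+1)}r)$, and Lemma~\ref{lemma-analyzed} derives monotonicity and the upward propagation of attainability directly from the pointwise strict monotonicity \eqref{varphiMono} of $\sigma\mapsto\frac{(p-1)!}{\sigma^{p-1}}\varphi_p(\sigma t^{p/(p-1)})$, tested on a critical maximizer. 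You instead deduce everything from the critical/subcritical identity \eqref{equ-identity} combined with the elementary lower bound $TMSC(\mu,\alpha,\theta)\ge\mu^{p-1}/(p-1)!$ and its $O(\mu^p)$ refinement. Your central observation --- that a critical maximizer at $\sigma_0$ forces the supremum in \eqref{equ-identity} to be achieved at an interior $\mu_0\in(0,\sigma_0)$, whence $\nu(\sigma)>\nu(\sigma_0)$ for $\sigma>\sigma_0$ and $\nu(\sigma)\to\nu(\sigma_0)$ as $\sigma\uparrow\sigma_0$ --- replaces the paper's direct use of \eqref{varphiMono} on the maximizer, and it extracts genuinely extra structure (the realization of the sup). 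Both proofs of $\mathrm{(iii)}$ rest on the same phenomenon (the index-$p$ monomial gives a correction of order $\mu$, resp.\ $t^{1/(p-1)}$, that dominates the $\mu^{p-1}$-order deficit precisely when $p>2$), but you encode it through the subcritical object $S(\mu)$ rather than the one-parameter family $v_t$. The paper's route is slightly more self-contained since Lemma~\ref{vanishing-integer} needs no identity at all, while yours more transparently shows \emph{why} attainability is monotone in $\sigma$ and keeps the whole section anchored to \eqref{equ-identity}.
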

As a consequence of Theorem~\ref{thm-MR-Maximizers-analyzed}, since  $TMC(\sigma, 1,\theta)$ is not attained for $\sigma$ small enough (cf. \cite[Theorem~1.3]{AbreFern}), Theorem~\ref{thm-MR-Maximizers-analyzed} provides 
\begin{equation}
TMC(\sigma, 1,\theta)=\sup_{\|u\|\le 1}\int_{0}^{\infty}\varphi_{2}\left(\sigma|u|^{2}\right)\mathrm{d}\lambda_{\theta}=\sigma,\quad\forall\, \sigma\in [0,\sigma_{*}].
\end{equation}

The rest of this paper is arranged as follows. In Section~\ref{sec2}, we  show Theorem~\ref{thm-AT}. Section~\ref{sec3} is devoted to the subcritical and critical equivalence stated  in Theorem~\ref{thm-MR}. In Section~\ref{sec4} we will prove the  existence of extremal functions for both subcritical $TMSC$ and critical $TMC$ fractional Trudinger-Moser  supremum in  Theorem~\ref{thm-AT-Maximizers} and Theorem~\ref{thm-MR-Maximizers}. The proof of Theorem~\ref{thm-MR-Maximizers-analyzed} is given in Section~\ref{sec5}. 
\section{Sharp subcritical Trudinger-Moser inequality: Proof of Theorem~\ref{thm-AT}}
\label{sec2}
In this section, we will prove  the asymptotic behavior for the  supremum $TMSC(\mu,\alpha,\theta)$ for the subcritical Trudinger-Moser inequality in Theorem~\ref{thm-AT}.
\subsection{Some elementary properties} 

Note that from the  definition  \eqref{fractional integral}  and  the change of variables $s=\tau r$, we have
\begin{equation}\label{rts-fractional}
\begin{aligned}
\int_{0}^{\infty}f(\tau r)\mathrm{d}\lambda_{\theta}=\frac{1}{\tau^{\theta+1}}\int_{0}^{\infty} f(s)\mathrm{d}\lambda_{\theta},\quad \tau>0.
\end{aligned}
\end{equation}
Thus,   by setting  $u_{\tau}(r)=\zeta u(\tau r),$ with $\zeta, \tau>0$  and $u\in X^{1,p}_{\infty}(\alpha,\theta)$ we can write
\begin{equation}\label{rts-LpLq}
\begin{aligned}
&\|u^{\prime}_{\tau}\|^{p}_{L^{p}_{\alpha}}=\frac{(\zeta\tau)^{p}}{\tau^{\alpha+1}}\|u^{\prime}\|^{p}_{L^{p}_{\alpha}}\\
&\|u_{\tau}\|^{q}_{L^{q}_{\theta}}=\frac{\zeta^{q}}{\tau^{\theta+1}}\|u\|^{q}_{L^{q}_{\theta}} \;\; q\ge p.
\end{aligned}
\end{equation}
Also, we observe that  
\begin{equation}\label{rts-exp}
\begin{aligned}
&\varphi_p(\rho t)\le \rho^{p-1}\varphi_p(t), \;\;\mbox{if}\;\;   0\le \rho\le 1\\
&\varphi_p(\rho t)\ge \rho^{p-1}\varphi_p(t), \;\;\mbox{if}\;\;   \rho\ge 1
\end{aligned}
\end{equation}
where  $\varphi_p(t)$ is given by \eqref{exp-frac}.
\begin{lemma} \label{aux-lemma-ineq} For all $q\ge 1$ and $\epsilon>0$ it holds:
 \begin{equation}
 \left(x+y\right)^{q}\le (1+\epsilon)^{\frac{q-1}{q}}x^{q}+\left(1-(1+\epsilon)^{-\frac{1}{q}}\right)^{1-q}y^{q},\;\; x,y\ge 0.
 \end{equation}
\end{lemma}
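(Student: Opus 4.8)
The plan is to deduce the inequality directly from the convexity of the map $s\mapsto s^{q}$ on $[0,\infty)$, which is available precisely because $q\ge 1$. The only genuine decision is the choice of the weights in the convex combination, and this is dictated by the constants that must appear in the conclusion.

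Concretely, I would fix $q\ge 1$ and $\epsilon>0$, and set $\lambda=(1+\epsilon)^{-1/q}$; since $\epsilon>0$ this gives $\lambda\in(0,1)$, and hence $1-\lambda\in(0,1)$ as well. For $x,y\ge 0$ one writes
\[
x+y=\lambda\cdot\frac{x}{\lambda}+(1-\lambda)\cdot\frac{y}{1-\lambda},
\]
and applies Jensen's inequality (equivalently, the convexity of $t\mapsto t^{q}$) to the right-hand side:
\[
(x+y)^{q}\le \lambda\left(\frac{x}{\lambda}\right)^{q}+(1-\lambda)\left(\frac{y}{1-\lambda}\right)^{q}=\lambda^{1-q}x^{q}+(1-\lambda)^{1-q}y^{q}.
\]
It then remains to rewrite the two coefficients via the definition of $\lambda$: one has $\lambda^{1-q}=(1+\epsilon)^{-(1-q)/q}=(1+\epsilon)^{(q-1)/q}$ and $(1-\lambda)^{1-q}=\left(1-(1+\epsilon)^{-1/q}\right)^{1-q}$, which is exactly the asserted bound.

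There is essentially no obstacle here; the only point requiring care is guessing $\lambda=(1+\epsilon)^{-1/q}$, which is forced by demanding that $\lambda^{1-q}$ equal the prescribed factor $(1+\epsilon)^{(q-1)/q}$ of $x^{q}$ (the coefficient of $y^{q}$ is then automatic). The borderline situations — $x=0$, $y=0$, or $q=1$ (in which case every displayed line is an equality) — need no separate treatment, since $\lambda$ and $1-\lambda$ are strictly positive. For completeness one can also check consistency directly: for $y=0$ the claim reduces to $x^{q}\le(1+\epsilon)^{(q-1)/q}x^{q}$, true because $(1+\epsilon)^{(q-1)/q}\ge 1$, and for $x=0$ to $y^{q}\le\left(1-(1+\epsilon)^{-1/q}\right)^{1-q}y^{q}$, true because a number in $(0,1)$ raised to the nonpositive power $1-q$ is at least $1$.
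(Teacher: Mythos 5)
Your proof is correct and is essentially identical to the paper's: the paper also writes $x+y$ as a convex combination with weights $\lambda=(1+\epsilon)^{-1/q}$ and $1-\lambda$ and applies convexity of $t\mapsto t^q$. The only difference is cosmetic (the paper leaves the coefficient of $x^q$ in the form $(1+\epsilon)^{-1/q}(1+\epsilon)$ rather than simplifying to $(1+\epsilon)^{(q-1)/q}$).
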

\begin{proof}
Since $x\mapsto x^{q},\; x\ge 0$ is a convex function, we have 
\begin{equation}\nonumber
\begin{aligned}
\left(x+y\right)^{q}&=\left(\frac{1}{(1+\epsilon)^{\frac{1}{q}}}(1+\epsilon)^{\frac{1}{q}}x +\left(1-\frac{1}{(1+\epsilon)^{\frac{1}{q}}}\right)\left(1-\frac{1}{(1+\epsilon)^{\frac{1}{q}}}\right)^{-1}y\right)^{q}\\
&\le \frac{1}{(1+\epsilon)^{\frac{1}{q}}} (1+\epsilon)x^{q}+\left(1-\frac{1}{(1+\epsilon)^{\frac{1}{q}}}\right)^{1-q}y^{q}.
\end{aligned}
\end{equation}
\end{proof}

Henceforth suppose that the condition  $\alpha-p+1=0$ holds.
The next result ensures that the subcritical supremum $TMSC(\mu,\alpha,\theta)$ can be normalized.
\begin{lemma}\label{Lemma-norm1}
\begin{equation}\nonumber
TMSC(\mu,\alpha,\theta)=\sup_{\|u^{\prime}\|_{L^{p}_{\alpha}}=\|u\|_{L^{p}_{\theta}}=1}\int_{0}^{\infty}\varphi_{p}\left(\mu |u|^{\frac{p}{p-1}}\right)\mathrm{d}\lambda_{\theta}.
\end{equation}
\end{lemma}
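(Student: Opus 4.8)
The plan is to normalize using the natural scale invariance coming from \eqref{rts-fractional}--\eqref{rts-exp}. Write $M$ for the right-hand supremum; I will prove $M\le TMSC(\mu,\alpha,\theta)$ and $TMSC(\mu,\alpha,\theta)\le M$ separately. The inequality $M\le TMSC(\mu,\alpha,\theta)$ is immediate: every $u$ with $\|u^{\prime}\|_{L^{p}_{\alpha}}=\|u\|_{L^{p}_{\theta}}=1$ is admissible in the definition of $TMSC(\mu,\alpha,\theta)$, and for such $u$ the quotient $\|u\|_{L^{p}_{\theta}}^{-p}\int_{0}^{\infty}\varphi_{p}(\mu|u|^{p/(p-1)})\,\mathrm{d}\lambda_{\theta}$ collapses to $\int_{0}^{\infty}\varphi_{p}(\mu|u|^{p/(p-1)})\,\mathrm{d}\lambda_{\theta}$.

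For the reverse inequality I would fix a nonzero $u\in X^{1,p}_{\infty}(\alpha,\theta)$ with $\|u^{\prime}\|_{L^{p}_{\alpha}}\le 1$ and produce a competitor $v=u_{\tau}=\zeta\,u(\tau\,\cdot)$ for $M$ that does at least as well. Note first that $\|u^{\prime}\|_{L^{p}_{\alpha}}>0$ (the condition $\lim_{r\to\infty}u(r)=0$ rules out nonzero constants) and $0<\|u\|_{L^{p}_{\theta}}<\infty$ by \eqref{TM-compactembeddings}; thus I may take $\zeta=\|u^{\prime}\|_{L^{p}_{\alpha}}^{-1}\ge 1$ and then $\tau=(\zeta^{p}\|u\|_{L^{p}_{\theta}}^{p})^{1/(\theta+1)}$. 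Since dilations and scalar multiples preserve $X^{1,p}_{\infty}(\alpha,\theta)$, using $\alpha+1=p$ in \eqref{rts-LpLq} one checks $\|v^{\prime}\|_{L^{p}_{\alpha}}=\zeta\|u^{\prime}\|_{L^{p}_{\alpha}}=1$ and $\|v\|_{L^{p}_{\theta}}^{p}=\zeta^{p}\tau^{-(\theta+1)}\|u\|_{L^{p}_{\theta}}^{p}=1$, so $v$ is admissible for $M$.

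Then I would evaluate the exponential integral of $v$. By the change of variables \eqref{rts-fractional},
\[
\int_{0}^{\infty}\varphi_{p}\!\left(\mu|v|^{\frac{p}{p-1}}\right)\mathrm{d}\lambda_{\theta}=\frac{1}{\tau^{\theta+1}}\int_{0}^{\infty}\varphi_{p}\!\left(\mu\,\zeta^{\frac{p}{p-1}}|u|^{\frac{p}{p-1}}\right)\mathrm{d}\lambda_{\theta},
\]
and since $\zeta^{p/(p-1)}\ge 1$, the superhomogeneity estimate in the second line of \eqref{rts-exp} (with $\rho=\zeta^{p/(p-1)}$, so $\rho^{p-1}=\zeta^{p}$) gives $\varphi_{p}(\mu\zeta^{p/(p-1)}|u|^{p/(p-1)})\ge\zeta^{p}\varphi_{p}(\mu|u|^{p/(p-1)})$. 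Substituting $\tau^{-(\theta+1)}=\zeta^{-p}\|u\|_{L^{p}_{\theta}}^{-p}$, the powers of $\zeta$ cancel and I obtain
\[
\int_{0}^{\infty}\varphi_{p}\!\left(\mu|v|^{\frac{p}{p-1}}\right)\mathrm{d}\lambda_{\theta}\ge\frac{1}{\|u\|_{L^{p}_{\theta}}^{p}}\int_{0}^{\infty}\varphi_{p}\!\left(\mu|u|^{\frac{p}{p-1}}\right)\mathrm{d}\lambda_{\theta}.
\]
Hence $M$ dominates the quotient for every admissible $u$, so $TMSC(\mu,\alpha,\theta)\le M$, and together with the first inequality this yields the claimed identity.

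There is no genuine obstacle here; the only point to watch is that the dilation must be set up so that $\zeta\ge 1$ — which is automatic from $\|u^{\prime}\|_{L^{p}_{\alpha}}\le 1$ — so that it is the correct (superhomogeneous, exponent $p-1$) branch of \eqref{rts-exp} that applies; with $\zeta<1$ the estimate would run the wrong way.
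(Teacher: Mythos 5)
Your proof is correct and follows essentially the same route as the paper: you rescale to $v(r)=u(\tau r)/\|u^{\prime}\|_{L^{p}_{\alpha}}$ with $\tau^{\theta+1}=\|u\|^{p}_{L^{p}_{\theta}}/\|u^{\prime}\|^{p}_{L^{p}_{\alpha}}$, check both normalizations using \eqref{rts-LpLq} with $\alpha+1=p$, and apply the superhomogeneity estimate in \eqref{rts-exp} with $\rho=\|u^{\prime}\|_{L^{p}_{\alpha}}^{-p/(p-1)}\ge 1$. The added remarks (nondegeneracy of $\|u^{\prime}\|_{L^{p}_{\alpha}}$ and the need for $\zeta\ge 1$) are correct and harmless refinements of the argument the paper gives.
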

\begin{proof}
It is sufficient to show that 
\begin{equation}\nonumber
TMSC(\mu,\alpha,\theta)\le \sup_{\|u^{\prime}\|_{L^{p}_{\alpha}}=\|u\|_{L^{p}_{\theta}}=1}\int_{0}^{\infty}\varphi_{p}\left(\mu |u|^{\frac{p}{p-1}}\right)\mathrm{d}\lambda_{\theta}.
\end{equation}
In order to get this, for each  $u\in X^{1,p}_{\infty}\setminus\left\{0\right\}$, with  $\|u^{\prime}\|_{L^{p}_{\alpha}}\le 1$ we set
$$
v(r)=\frac{u(\tau r)}{\|u^{\prime}\|_{L^{p}_{\alpha}}}; \;\;\;\mbox{with}\;\; \tau =\left(\frac{\|u\|^{p}_{L^{p}_{\theta}}}{\|u^{\prime}\|^{p}_{L^{p}_{\alpha}}}\right)^{\frac{1}{\theta+1}}.
$$
Since we are supposing $\alpha-p+1=0$,  \eqref{rts-LpLq} yields 
\begin{equation}\nonumber
\|v^{\prime}\|_{L^{p}_{\alpha}}=\|v\|_{L^{p}_{\theta}}=1.
\end{equation}
 Then, from \eqref{rts-fractional} and \eqref{rts-exp} it follows that 
\begin{equation}\nonumber
\begin{aligned}
\int_{0}^{\infty}\varphi_{p}\left(\mu |v|^{\frac{p}{p-1}}\right)\mathrm{d}\lambda_{\theta}
&=\frac{1}{\tau^{\theta+1}}\int_{0}^{\infty}\varphi_{p}\left(\frac{1 }{\|u^{\prime}\|^{\frac{p}{p-1}}_{L^{p}_{\alpha}}}\mu |u|^{\frac{p}{p-1}}\right)\mathrm{d}\lambda_{\theta}\\
& \ge \left(\frac{\|u^{\prime}\|^{p}_{L^{p}_{\alpha}}}{\|u\|^{p}_{L^{p}_{\theta}}}\right)\frac{1}{\|u^{\prime}\|^{p}_{L^{p}_{\alpha}}}\int_{0}^{\infty}\varphi_{p}\left(\mu |u|^{\frac{p}{p-1}}\right)\mathrm{d}\lambda_{\theta}\\
&=\frac{1}{\|u\|^{p}_{L^{p}_{\theta}}}\int_{0}^{\infty}\varphi_{p}\left(\mu |u|^{\frac{p}{p-1}}\right)\mathrm{d}\lambda_{\theta}
\end{aligned}
\end{equation}
which completes the proof.
\end{proof}
\subsection{Proof of Theorem~\ref{thm-AT}}
 Let $u\in X^{1,p}_{\infty}$, with $\|u^{\prime}\|_{L^{p}_{\alpha}}\le 1$. From the P\'{o}lya-Szeg\"{o} inequality obtained in \cite{AbreFern, Alvino2017}, we can assume that $u$ is a non-increasing function. Also, by Lemma~\ref{Lemma-norm1}  it is sufficient to analyze the case $\|u\|_{L^{p}_{\theta}}=1$. 
 
Initially, we will prove that 
\begin{equation}\label{AT-up}
TMSC(\mu,\alpha,\theta)\le \frac{C(\alpha,
\theta)}{1-\left(\frac{\mu}{
\mu_{\alpha,
\theta}}\right)^{p-1}}.
\end{equation} 
Let us denote by
\begin{equation}\nonumber
A_u=\left\{r>0\;:\; |u(r)|^{p}>1-\left({\mu}/{\mu_{\alpha,\theta}}\right)^{p-1}\right\}.
\end{equation}
We observe that for all $|t|\le 1$ it holds 
\begin{equation}\label{exp-estimate}
\varphi_{p}(\mu|t|^{\frac{p}{p-1}})=\sum_{j\in\mathbb{N}\;:\; j\ge p-1}\frac{\mu^{j}}{j!}|t|^{\frac{jp}{p-1}}\le \sum_{j\in\mathbb{N}\;:\; j\ge p-1}\frac{\mu^{j}}{j!}|t|^{p}\le |t|^{p}\sum_{j=0}^{\infty}\frac{\mu^{j}}{j!}=e^{\mu}|t|^{p}.
\end{equation}
Hence, if $A_u=\emptyset$ and consequently $u\le 1$ in $(0,\infty)$, the  inequality \eqref{exp-estimate} yields
\begin{equation}\label{case-below 1}
\begin{aligned}
\int_{0}^{\infty}\varphi_{p}\left(\mu|u|^{\frac{p}{p-1}}\right)\mathrm{d}\lambda_{\theta}
&\le e^{\mu}\int_{0}^{\infty}|u|^{p}\mathrm{d}\lambda_{\theta}\\
&\le \frac{e^{\mu_{\alpha,\theta}}}{1-\left(\frac{\mu}{\mu_{\alpha,\theta}}\right)^{p-1}}.
\end{aligned}
\end{equation}
So we can assume $A_u\not=\emptyset$. Thus, there exists $R_u>0$ such that 
$
A_u=(0,R_u),
$
because we are assuming $u$ is a non-increasing function. Analogously to \eqref{case-below 1}, we obtain
\begin{equation}\nonumber
\begin{aligned}
\int_{R_u}^{\infty}\varphi_{p}\left(\mu|u|^{\frac{p}{p-1}}\right)\mathrm{d}\lambda_{\theta}& \le\int_{\left\{u\le 1\right\}}\varphi_{p}\left(\mu|u|^{\frac{p}{p-1}}\right)\mathrm{d}\lambda_{\theta}\\
&\le e^{\mu}\int_{\left\{u\le 1\right\}}|u|^{p}\mathrm{d}\lambda_{\theta}\\
&\le \frac{e^{\mu_{\alpha,\theta}}}{1-\left(\frac{\mu}{\mu_{\alpha,\theta}}\right)^{p-1}}.
\end{aligned}
\end{equation}
Now, observe that
\begin{equation}\label{Bu-measure}
|B_{R_u}|_{\theta}=\int_{0}^{R_u}\mathrm{d}\lambda_{\theta}\le \frac{1}{1-\left(\frac{\mu}{\mu_{\alpha,\theta}}\right)^{p-1}}\int_{0}^{\infty}|u|^{p}\mathrm{d}\lambda_{\theta}\le \frac{1}{1-\left(\frac{\mu}{\mu_{\alpha,\theta}}\right)^{p-1}}.
\end{equation}
For $r\in (0, R_u)$, we set
\begin{equation}\nonumber
v(r)=u(r)-\left(1-\left(\frac{\mu}{\mu_{\alpha,\theta}}\right)^{p-1}\right)^{\frac{1}{p}}.
\end{equation}
It is clear that  $v\in X^{1,p}_{R_u}(\alpha,\theta)$ and $\|v^{\prime}\|_{L^{p}_{\alpha}(0,R_u)}\le 1$. Also, by choosing $\epsilon=({\mu_{\alpha,\theta}}/{\mu})^{p}-1$ and $q=p/(p-1)$ in Lemma~\ref{aux-lemma-ineq}, we have
\begin{equation}\nonumber
\begin{aligned}
|u|^{\frac{p}{p-1}}&\le (1+\epsilon)^{\frac{1}{p}}|v|^{\frac{p}{p-1}}+\left(1-\frac{1}{(1+\epsilon)^{\frac{p-1}{p}}}\right)^{-\frac{1}{p-1}}\left(1-\left(\frac{\mu}{\mu_{\alpha,\theta}}\right)^{p-1}\right)^{\frac{1}{p-1}}\\
&=\frac{\mu_{\alpha,\theta}}{\mu}|v|^{\frac{p}{p-1}}+1.
\end{aligned}
\end{equation}
Hence, the Trudinger-Moser type inequality \eqref{TM1} and $\eqref{Bu-measure}$ imply
\begin{equation}\nonumber
\begin{aligned}
\int_{0}^{R_u}\varphi_{p}\left(\mu|u|^{\frac{p}{p-1}}\right)\mathrm{d}\lambda_{\theta}& \le\int_{0}^{R_u}e^{\mu|u|^{\frac{p}{p-1}}}\mathrm{d}\lambda_{\theta}\\
&\le e^{\mu} \int_{0}^{R_u}e^{\mu_{\alpha,
\theta} |v|^{\frac{p}{p-1}}}\mathrm{d}\lambda_{\theta}\\
& \le c_{\alpha,\theta}e^{\mu}|B_{R_u}|_{\theta}\\
& \le \frac{c_{\alpha,\theta}\, e^{\mu_{\alpha,\theta}}}{1-\left(\frac{\mu}{\mu_{\alpha,\theta}}\right)^{p-1}}.
\end{aligned}
\end{equation}
\begin{remark}\label{remark-final}
At this point, we note that we have proved that 
$$
TMSC(\mu,\alpha,\theta)\le  \frac{C(\alpha,
\theta)}{1-\left(\frac{\mu}{
\mu_{\alpha,
\theta}}\right)^{p-1}}
$$
for any $\mu<\mu_{\alpha,\theta}$ not necessarily close to $\mu_{\alpha,\theta}$.
\end{remark}
This proves \eqref{AT-up}. Next,  we will prove the contrary inequality
\begin{equation}\label{AT-low}
TMSC(\mu,\alpha,\theta)\ge \frac{c(\alpha,
\theta)}{1-\left(\frac{\mu}{
\mu_{\alpha,
\theta}}\right)^{p-1}}.
\end{equation} 
To see this, let us consider the sequence
\begin{equation}\label{Msequences}
u_n(r)=\frac{1}{\omega^{\frac{1}{p}}_{\alpha}}\left\{\begin{aligned}
&\left(\frac{n}{\theta+1}\right)^{\frac{p-1}{p}}, & \mbox{if}&\quad 0\le r \le e^{-\frac{n}{\theta+1}},\\
&\left(\frac{\theta+1}{n}\right)^{\frac{1}{p}}\ln\frac{1}{r},&\mbox{if}&\quad e^{-\frac{n}{\theta+1}}<r<1,\\
& 0, &\mbox{if}& \quad r\ge 1.
 \end{aligned}\right.
\end{equation}
Since that $\alpha=p-1$, it follows that 
 \begin{equation}\nonumber
 \begin{aligned}
 &\|u^{\prime}_n\|^{p}_{L^{p}_{\alpha}}=1\\
& \|u_n\|^{p}_{L^{p}_{\theta}} =\frac{c}{n}\left[n^{p}e^{-n}+\int_{0}^{n}s^{p}e^{-s}\mathrm{d} s\right]
 \end{aligned}
 \end{equation}
 for some $c=c(\alpha,\theta)>0$. Thus,   since $\int_{0}^{\infty}s^{p}e^{-s}\mathrm{d} s=\Gamma(p+1)>0$,  there are $c_1=c_1(\alpha,\theta)>0$ and $n_1\in\mathbb{N}$ such that 
\begin{equation}\label{assyLp}
\|u_n\|^{p}_{L^{p}_{\theta}}\le \frac{c_1}{n}, \quad \forall\; n\ge n_1.
\end{equation}
On the other hand
\begin{equation}\nonumber
\begin{aligned}
\int_{0}^{\infty}\varphi_{p}\left(\mu |u_n|^{\frac{p}{p-1}}\right)\mathrm{d}\lambda_{\theta}&\ge \int_{0}^{e^{-\frac{n}{\theta+1}}}\varphi_{p}\left(\frac{\mu}{\mu_{\alpha,\theta}}n\right)\mathrm{d}\lambda_{\theta}=\frac{\omega_{\theta}}{\theta+1}\varphi_{p}\left(\frac{\mu}{\mu_{\alpha,\theta}}n\right)e^{-n}\\
&=\frac{\omega_{\theta}}{\theta+1} \left[e^{\left(\frac{\mu}{\mu_{\alpha,\theta}}-1\right)n}-\left(\sum_{j=0}^{k_0-1}\left(\frac{\mu}{\mu_{\alpha,\theta}}\right)^j\frac{n^j}{j!}\right)e^{-n}\right]\\
&\ge \frac{\omega_{\theta}}{\theta+1} \left[e^{\left(\frac{\mu}{\mu_{\alpha,\theta}}-1\right)n}-\left(\sum_{j=0}^{k_0-1}\frac{n^j}{j!}\right)e^{-n}\right].
\end{aligned}
\end{equation}
Thus, for all $n\ge n_1$
\begin{equation}\label{assyphi}
\begin{aligned}
TMSC(\mu,\alpha,\theta)&\ge \frac{1}{\|u_n\|^{p}_{L^{p}_{\theta}}}\int_{0}^{\infty}\varphi_{p}\left(\mu |u_n|^{\frac{p}{p-1}}\right)\mathrm{d}\lambda_{\theta}\\
&\ge c_2\left[n e^{\left(\frac{\mu}{\mu_{\alpha,\theta}}-1\right)n}-\left(\sum_{j=0}^{k_0-1}\frac{n^j}{j!}\right)ne^{-n}\right]\\
&= \frac{c_2}{1-\left(\frac{\mu}{\mu_{\alpha,\theta}}\right)^{p-1}} \left(1-\left(\frac{\mu}{\mu_{\alpha,\theta}}\right)^{p-1}\right) \left[ne^{-\left(1-\frac{\mu}{\mu_{\alpha,\theta}}\right)n}-\left(\sum_{j=0}^{k_0-1}\frac{n^j}{j!}\right)ne^{-n}\right],
\end{aligned}
\end{equation}
for some $c_2=c_2(\alpha,\theta)>0$. Now, we can choose $n_2\ge n_1$ such that 
\begin{equation}\nonumber
\left(1-\left(\frac{\mu}{\mu_{\alpha,\theta}}\right)^{p-1}\right)\left(\sum_{j=0}^{k_0-1}\frac{n^j}{j!}\right)ne^{-n}\le \frac{1}{e^{5}}, \quad \forall\; n\ge n_2\quad \mbox{and}\quad 0\le \mu<\mu_{\alpha,\theta}.
\end{equation}
Hence, for all $n\ge n_2$
\begin{equation}\nonumber
\begin{aligned}
TMSC(\mu,\alpha,\theta)&\ge \frac{c_2}{1-\left(\frac{\mu}{\mu_{\alpha,\theta}}\right)^{p-1}} \left[\left(1-\frac{\mu}{\mu_{\alpha,\theta}}\right) ne^{-\left(1-\frac{\mu}{\mu_{\alpha,\theta}}\right)n}-e^{-5}\right].
\end{aligned}
\end{equation}
Now,  if  $\alpha$ is close enough to $\mu_{\alpha,\theta}$ such that $\left(1-\frac{\mu}{\mu_{\alpha,\theta}}\right)^{-1}\ge n_2$, by  picking $n\in\mathbb{N}$ such that $$\left(1-\frac{\mu}{\mu_{\alpha,\theta}}\right)^{-1}\le n\le 4 \left(1-\frac{\mu}{\mu_{\alpha,\theta}}\right)^{-1}$$ we obtain
\begin{equation}\nonumber
\begin{aligned}
TMSC(\mu,\alpha,\theta)&\ge \frac{c_2}{1-\left(\frac{\mu}{\mu_{\alpha,\theta}}\right)^{p-1}} \left[e^{-4}-e^{-5}\right].
\end{aligned}
\end{equation}
Finally, from \eqref{assyphi}, for $\mu=\mu_{\alpha,\theta}$ we have
\begin{equation}\nonumber
\begin{aligned}
TMSC(\mu_{\alpha,\theta},\alpha,\theta)
&\ge c_2\left[n -\left(\sum_{j=0}^{k_0-1}\frac{n^j}{j!}\right)ne^{-n}\right]\rightarrow\infty,\;\;\mbox{as}\;\; n\rightarrow\infty.
\end{aligned}
\end{equation} 
\section{Equivalence of critical and subcritical Trudinger-Moser  inequalities}
\label{sec3}
The aim of this section is to prove the critical and subcritical  equivalence given in  Theorem~\ref{thm-MR}. We observe that we are not assuming that $TMC(\mu_{\alpha,\theta}, \alpha,\theta)$  is finite in our argument.

\begin{lemma}\label{AT-MT} For any $0<\sigma\le \mu_{\alpha,\theta}$ and $0<\mu<\sigma$
\begin{equation}\nonumber
TMSC(\mu,\alpha,\theta)\le \left(\frac{\left(\frac{\mu}{
\sigma}\right)^{p-1}}{1-\left(\frac{\mu}{
\sigma}\right)^{p-1}}\right)TMC(\sigma, \alpha,\theta).
\end{equation}
In particular, if $TMC(\mu_{\alpha,\theta}, \alpha,\theta)$  is finite, then $TMSC(\mu,\alpha,\theta)$ is finite.
\end{lemma}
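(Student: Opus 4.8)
The plan is to combine the two scaling identities \eqref{rts-fractional}--\eqref{rts-LpLq} with the normalization from Lemma~\ref{Lemma-norm1}. By that lemma it suffices to bound $\int_{0}^{\infty}\varphi_{p}(\mu|u|^{p/(p-1)})\,\mathrm{d}\lambda_{\theta}$ for $u\in X^{1,p}_{\infty}(\alpha,\theta)$ with $\|u^{\prime}\|_{L^{p}_{\alpha}}=\|u\|_{L^{p}_{\theta}}=1$, and to show it is at most $\frac{(\mu/\sigma)^{p-1}}{1-(\mu/\sigma)^{p-1}}\,TMC(\sigma,\alpha,\theta)$. The idea is to manufacture from such a $u$ an admissible competitor $v$ for the definition of $TMC(\sigma,\alpha,\theta)$ --- that is, one with $\|v\|\le 1$ --- of the dilated form $v(r)=\zeta u(\tau r)$, and to choose the parameters $\zeta,\tau>0$ so that two things happen at once: the argument $\sigma|v|^{p/(p-1)}$ collapses to $\mu|u(\tau\,\cdot\,)|^{p/(p-1)}$, and the full norm $\|v\|$ equals $1$.

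Concretely, I would first take $\zeta=(\mu/\sigma)^{(p-1)/p}$, so that $\sigma\zeta^{p/(p-1)}=\mu$ and hence $\sigma|v(r)|^{p/(p-1)}=\mu|u(\tau r)|^{p/(p-1)}$; then \eqref{rts-fractional} gives
\[
\int_{0}^{\infty}\varphi_{p}\!\left(\sigma|v|^{\frac{p}{p-1}}\right)\mathrm{d}\lambda_{\theta}=\frac{1}{\tau^{\theta+1}}\int_{0}^{\infty}\varphi_{p}\!\left(\mu|u|^{\frac{p}{p-1}}\right)\mathrm{d}\lambda_{\theta}.
\]
Next, using $\alpha=p-1$ and the normalizations of $u$, the relations \eqref{rts-LpLq} yield $\|v^{\prime}\|^{p}_{L^{p}_{\alpha}}=\zeta^{p}=(\mu/\sigma)^{p-1}$ and $\|v\|^{p}_{L^{p}_{\theta}}=\zeta^{p}/\tau^{\theta+1}$, so that $\|v\|^{p}=(\mu/\sigma)^{p-1}\bigl(1+\tau^{-(\theta+1)}\bigr)$. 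I would then choose $\tau$ by
\[
\tau^{\theta+1}=\frac{(\mu/\sigma)^{p-1}}{1-(\mu/\sigma)^{p-1}},
\]
which is a well-defined positive number since $0<\mu<\sigma$ and $\theta+1>0$; a direct substitution then gives $\|v\|^{p}=1$, so $v$ is admissible in $TMC(\sigma,\alpha,\theta)$.

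Finally, plugging this $v$ into the definition of $TMC$ and using $\tau^{-(\theta+1)}=\bigl(1-(\mu/\sigma)^{p-1}\bigr)/(\mu/\sigma)^{p-1}$ gives
\[
TMC(\sigma,\alpha,\theta)\ge \frac{1-(\mu/\sigma)^{p-1}}{(\mu/\sigma)^{p-1}}\int_{0}^{\infty}\varphi_{p}\!\left(\mu|u|^{\frac{p}{p-1}}\right)\mathrm{d}\lambda_{\theta};
\]
since $\|u\|_{L^{p}_{\theta}}=1$, taking the supremum over all admissible $u$ and invoking Lemma~\ref{Lemma-norm1} yields exactly the claimed bound, and the ``in particular'' statement follows by specializing $\sigma=\mu_{\alpha,\theta}$. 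There is no substantial obstacle here: the whole argument is a single change of variables, and the only point that requires care is solving $\sigma\zeta^{p/(p-1)}=\mu$ and $\|v\|=1$ simultaneously for $(\zeta,\tau)$ --- in particular checking that the resulting $\tau^{\theta+1}$ is genuinely positive, which is guaranteed precisely by the hypothesis $\mu<\sigma$.
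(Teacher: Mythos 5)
Your proof is correct and matches the paper's argument essentially verbatim: both normalize via Lemma~\ref{Lemma-norm1}, rescale with $\zeta=(\mu/\sigma)^{(p-1)/p}$ and $\tau^{\theta+1}=(\mu/\sigma)^{p-1}/\bigl(1-(\mu/\sigma)^{p-1}\bigr)$ to produce an admissible competitor for $TMC(\sigma,\alpha,\theta)$ with full norm $1$, and then undo the change of variables via \eqref{rts-fractional}.
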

\begin{proof}
Let $u\in X^{1,p}_{\infty}$, with  $\|u^{\prime}\|_{L^{p}_{\alpha}}=1$ and $\|u\|_{L^{p}_{\theta}}=1$. Set
\begin{equation}
u_{t}(r)=\left(\frac{\mu}{\sigma}\right)^{\frac{p-1}{p}}u(tr), \;\;\mbox{with}\;\; t=\left(\frac{\left(\frac{\mu}{\sigma}\right)^{p-1}}{1-\left(\frac{\mu}{\sigma}\right)^{p-1}}\right)^{\frac{1}{\theta+1}}.
\end{equation}
By \eqref{rts-LpLq} we get
\begin{equation}\nonumber
\begin{aligned}
\|u_{t}^{\prime}\|^{p}_{L^{p}_{\alpha}}&=\left(\frac{\mu}{\sigma}\right)^{p-1}\|u^{\prime}\|^{p}_{L^{p}_{\alpha}}= \left(\frac{\mu}{\sigma}\right)^{p-1}\\
\|u_{t}\|^{p}_{L^{p}_{\theta}} &=\left(\frac{\mu}{\sigma}\right)^{p-1}\frac{\|u\|^{p}_{L^{p}_{\theta}}}{t^{\theta+1}}=1-\left(\frac{\mu}{\sigma}\right)^{p-1}.
\end{aligned}
\end{equation}
Hence $\|u_{t}^{\prime}\|^{p}_{L^{p}_{\alpha}}+\|u_{t}\|^{p}_{L^{p}_{\theta}}=1$ and we have
\begin{equation}\nonumber
\begin{aligned}
\int_{0}^{\infty}\varphi_{p}\left(\mu |u|^{\frac{p}{p-1}}\right)\mathrm{d}\lambda_{\theta}& =t^{\theta+1}\int_{0}^{\infty}\varphi_{p}\left(\sigma |u_{t}|^{\frac{p}{p-1}}\right)\mathrm{d}\lambda_{\theta}\le \left(\frac{\left(\frac{\mu}{\sigma}\right)^{p-1}}{1-\left(\frac{\mu}{\sigma}\right)^{p-1}}\right)TMC(\sigma,\alpha,\theta).
\end{aligned}
\end{equation}
Since $u\in X^{1,p}_{\infty}$, with  $\|u^{\prime}\|_{L^{p}_{\alpha}}=1$ and $\|u\|_{L^{p}_{\theta}}=1$ is  arbitrary, in view of the Lemma~\ref{Lemma-norm1}, we conclude the proof.
\end{proof}
\subsection{Proof of Theorem~\ref{thm-MR}}
Let $u\in X^{1,p}_{\infty}$ such that $0<\|u^{\prime}\|^{p}_{L^{p}_{\alpha}}+\|u\|^{p}_{L^{p}_{\theta}}\le 1$. Assume that
\begin{equation}\nonumber
\|u^{\prime}\|_{L^{p}_{\alpha}}=\vartheta\in(0,1)\;\;\mbox{and}\;\; \|u\|^{p}_{L^{p}_{\theta}}\le 1-\vartheta^{p}.
\end{equation}
If $\frac{1}{2}<\vartheta<1$, we set
\begin{equation}\nonumber
u_{t}(r)=\frac{u(tr)}{\vartheta},\;\;\;\mbox{with}\;\; t=\left(\frac{1-\vartheta^{p}}{\vartheta^{p}}\right)^{\frac{1}{\theta+1}}>0.
\end{equation}
From \eqref{rts-LpLq}, we can write
\begin{equation}\nonumber
\begin{aligned}
\|u_t^{\prime}\|_{L^{p}_{\alpha}}&=\frac{\|u^{\prime}\|_{L^{p}_{\alpha}}}{\vartheta}=1\\
\|u_t\|^{p}_{L^{p}_{\theta}}& =\frac{1}{\vartheta^{p}}\frac{1}{t^{\theta+1}}\|u\|^{p}_{L^{p}_{\theta}}\le \frac{1-\vartheta^{p}}{\vartheta^{p}t^{\theta+1}}=1.
\end{aligned}
\end{equation}
Hence,  for any $\sigma\le \mu_{\alpha,\theta}$,  the Theorem~\ref{thm-AT} (cf. Remark~\ref{remark-final}) yields
\begin{equation}\nonumber
\begin{aligned}
\int_{0}^{\infty}\varphi_{p}\left(\sigma|u|^{\frac{p}{p-1}}\right)\mathrm{d}\lambda_{\theta}
&\le t^{\theta+1}\int_{0}^{\infty}\varphi_{p}\left(\vartheta^{\frac{p}{p-1}}\mu_{\alpha,\theta}|u_t|^{\frac{p}{p-1}}\right)\mathrm{d}\lambda_{\theta}\\
&\le t^{\theta+1}TMSC\left(\vartheta^{\frac{p}{p-1}}\mu_{\alpha,\theta},\alpha,\theta\right)\\
&\le \left(\frac{1-\vartheta^{p}}{\vartheta^{p}}\right)\frac{C(\alpha,\theta)}{1-\left(\frac{\vartheta^{\frac{p}{p-1}}\mu_{\alpha,\theta}}{\mu_{\alpha,\theta}}\right)^{p-1}}\\
&= \left(\frac{1-\vartheta^{p}}{\vartheta^{p}}\right)\frac{C(\alpha,\theta)}{1-\vartheta^{p}}\\
&= \frac{C(\alpha,\theta)}{\vartheta^{p}}\\
&\le 2^{p} C(\alpha,\theta).
\end{aligned}
\end{equation}
If $0<\vartheta\le \frac{1}{2}$, setting
\begin{equation}\nonumber
v(r)=2u(r/\vartheta)
\end{equation}
we have
\begin{equation}\nonumber
\begin{aligned}
& \|v^{\prime}\|_{L^{p}_{\alpha}}=2\|u^{\prime}\|_{L^{p}_{\alpha}}\le 1\\
& \|v\|^{p}_{L^{p}_{\theta}}=2^p\vartheta^{\theta+1}\|u\|^{p}_{L^{p}_{\theta}}\le 2^p\vartheta^{\theta+1}(1-\vartheta^{p})\le 2^p\vartheta^{\theta+1}.
\end{aligned}
\end{equation}
Consequently, the Theorem~\ref{thm-AT} provides
\begin{equation}\nonumber
\begin{aligned}
\int_{0}^{\infty}\varphi_{p}\left(\sigma|u|^{\frac{p}{p-1}}\right)\mathrm{d}\lambda_{\theta}
&\le \frac{1}{\vartheta^{\theta+1}}\int_{0}^{\infty}\varphi_{p}\left(2^{-\frac{p}{p-1}}\mu_{\alpha,\theta}|v|^{\frac{p}{p-1}}\right)\mathrm{d}\lambda_{\theta}\\
&\le 2^{p}TMSC\left(2^{-\frac{p}{p-1}}\mu_{\alpha,\theta},\alpha,\theta\right)\\
&\le C(\alpha,\theta)\left(\frac{2^{p}}{1-2^{-p}}\right).
\end{aligned}
\end{equation}
Since $u\in X^{1,p}_{\infty}$, with $\|u\|\le 1$ is  arbitrary, we obtain
$TMC(\sigma,\alpha,\theta)<\infty$, for any $\sigma\le \mu_{\alpha,\theta}$.

Next, we will show that the constant
 $\mu_{\alpha,\theta}$ is sharp. To see this, we  can use the  sequence $(u_n)$ in \eqref{Msequences} again. Indeed, we have
 \begin{equation}\nonumber
 \begin{aligned}
 &\|u^{\prime}_n\|^{p}_{L^{p}_{\alpha}}=1\\
& \|u_n\|^{p}_{L^{p}_{\theta}} =O\left(\frac{1}{n}\right), \;\;\mbox{as}\;\; n\rightarrow\infty.
 \end{aligned}
 \end{equation}
 Now,  for $\tau_{n}\in (0,1)$ such that
$$
\tau^{p}_{n}(1+\|u_n\|^{p}_{L^{p}_{\theta}})=1, \;\;\mbox{with}\;\; \tau_n=1-O\left(\frac{1}{n^{\frac{1}{p}}}\right)\rightarrow 1,\;\;\mbox{as}\;\; n\rightarrow\infty
$$
we set 
\begin{equation}\nonumber
v_n(r)=\tau_{n} u_n(r).
\end{equation}
Then
$$\|v^{\prime}_n\|^{p}_{L^{p}_{\alpha}}+\|v_n\|^{p}_{L^{p}_{\theta}}=\tau^{p}_n\|u^{\prime}_n\|^{p}_{L^{p}_{\alpha}}+\tau^{p}_n\|u_n\|^{p}_{L^{p}_{\theta}}=\tau^{p}_n+\tau^{p}_n\|u_n\|^{p}_{L^{p}_{\theta}}=1.$$
In addition, for any $\sigma>\mu_{\alpha,\theta}$
\begin{equation}\nonumber
\begin{aligned}
 \int_{0}^{\infty}\varphi_{p}\left(\sigma|v_n|^{\frac{p}{p-1}}\right)\mathrm{d}\lambda_{\theta}
&\ge \int_{0}^{e^{-\frac{n}{\theta+1}}}\left(e^{\frac{n\sigma}{\mu_{\alpha,\theta}}\tau^{\frac{p}{p-1}}_n}-\sum_{k=0}^{k_0-1}\frac{1}{k!}\left(\frac{n\sigma}{\mu_{\alpha,\theta}} \right)^{k}\tau^{\frac{kp}{p-1}}_{n}\right)\mathrm{d}\lambda_{\theta}\\
&=\frac{\omega_{\theta}}{\theta+1}\left[e^{\left(\frac{n\sigma}{\mu_{\alpha,\theta}}\right)\tau^{\frac{p}{p-1}}_n-n}-O\left(\frac{(n\tau_n)^{k_0-1}}{e^{n}}\right)\right]\rightarrow+\infty,\quad\mbox{as}\;\; n\rightarrow\infty.
\end{aligned}
\end{equation}
Now, we are going to show that
\begin{equation}\label{At=cMt}
TMC(\sigma,\alpha,\theta)=\sup_{\mu\in (0,\sigma)}\left(\frac{1-\left(\frac{\mu}{\sigma}\right)^{p-1}}{\left(\frac{\mu}{\sigma}\right)^{p-1}}\right)TMSC(\mu,\alpha,\theta).
\end{equation}
By Lemma~\ref{AT-MT}, we obtain
\begin{equation}\label{At<=cMT}
\sup_{\mu\in (0,\sigma)}\left(\frac{1-\left(\frac{\mu}{\sigma}\right)^{p-1}}{\left(\frac{\mu}{\sigma}\right)^{p-1}}\right)TMSC(\mu,\alpha,\theta)\le TMC(\sigma,\alpha,\theta).
\end{equation}
In order to obtain the reverse inequality, let $(u_n)$ be a maximizing sequence of  $TMC(\sigma,\alpha,\theta)$, that is,  
$u_n\in X^{1,p}_{\infty}$,  $0<\|u^{\prime}_n\|^{p}_{L^{p}_{\alpha}}+\|u_n\|^{p}_{L^{p}_{\theta}}\le 1$ such that 
\begin{equation}
TMC(\sigma,\alpha,\theta)=\lim_{n}\int_{0}^{\infty}\varphi_{p}\left(\sigma|u_n|^{\frac{p}{p-1}}\right)\mathrm{d}\lambda_{\theta}.
\end{equation}
We set 
\begin{equation}\nonumber
u_{\tau_n}(r)=\frac{u(\tau_n r)}{\|u_n^{\prime}\|_{L^{p}_{\alpha}}},\;\;\;\mbox{with}\;\; \tau_n=\left(\frac{1-\|u^{\prime}_n\|^p_{L^{p}_{\alpha}}}{\|u^{\prime}_n\|^p_{L^{p}_{\alpha}}}\right)^{\frac{1}{\theta+1}} >0.
\end{equation}
Then
\begin{equation}\nonumber
\begin{aligned}
\|u_{\tau_n}^{\prime}\|_{L^{p}_{\alpha}}&=1\\
\|u_{\tau_n}\|^{p}_{L^{p}_{\theta}}& =\frac{1}{\|u_n^{\prime}\|^{p}_{L^{p}_{\alpha}}}\frac{1}{\tau_{n}^{\theta+1}}\|u_n\|^{p}_{L^{p}_{\theta}}=\frac{\|u_n\|^{p}_{L^{p}_{\theta}}}{1-\|u^{\prime}_n\|^p_{L^{p}_{\alpha}}}\le 1.
\end{aligned}
\end{equation}
Consequently
\begin{equation}\nonumber
\begin{aligned}
& \int_{0}^{\infty}\varphi_{p}\left(\sigma |u_n|^{\frac{p}{p-1}}\right)\mathrm{d}\lambda_{\theta}= \tau^{\theta+1}_n\int_{0}^{\infty}\varphi_{p}\left(\sigma\|u^{\prime}_n\|^{{\frac{p}{p-1}}}_{L^{p}_{\alpha}} |u_{\tau_n}|^{\frac{p}{p-1}}\right)\mathrm{d}\lambda_{\theta}\\
&\le \tau^{\theta+1}_{n}TMSC\left(\sigma\|u^{\prime}_n\|^{{\frac{p}{p-1}}}_{L^{p}_{\alpha}},\alpha,\theta\right)\\
& =\left(\frac{1-\|u^{\prime}_n\|^p_{L^{p}_{\alpha}}}{\|u^{\prime}_n\|^p_{L^{p}_{\alpha}}}\right) TMSC\left(\sigma\|u^{\prime}_n\|^{{\frac{p}{p-1}}}_{L^{p}_{\alpha}},\alpha,\theta\right) \\
&=\left(\frac{1-\left(\frac{\sigma\|u^{\prime}_n\|^{\frac{p}{p-1}}_{L^{p}_{\alpha}}}{\sigma}\right)^{p-1}}{\left(\frac{\sigma\|u^{\prime}_n\|^{\frac{p}{p-1}}_{L^{p}_{\alpha}}}{\sigma}\right)^{p-1}}\right)TMSC\left(\sigma\|u^{\prime}_n\|^{{\frac{p}{p-1}}}_{L^{p}_{\alpha}},\alpha,\theta\right) \\
&\le \sup_{\mu\in (0,\sigma)}\left(\frac{1-\left(\frac{\mu}{\sigma}\right)^{p-1}}{\left(\frac{\mu}{\sigma}\right)^{p-1}}\right)TMSC(\mu,\alpha,\theta).
\end{aligned}
\end{equation}
Hence, we obtain
\begin{equation}\label{MT<=cAt}
TMC(\sigma, \alpha,\theta)\le \sup_{\mu\in (0,\sigma)}\left(\frac{1-\left(\frac{\mu}{\sigma}\right)^{p-1}}{\left(\frac{\mu}{\sigma}\right)^{p-1}}\right)TMSC(\mu,\alpha,\theta).
\end{equation}
Now, \eqref{At=cMt} follows from \eqref{At<=cMT} and \eqref{MT<=cAt}.
\section{Existence of extremal fuctions}
\label{sec4}
In this section we will prove the existence of extremal functions for both subcritical and critical Trudinger-Moser inequalities Theorem~\ref{thm-AT-Maximizers} and Theorem~\ref{thm-MR-Maximizers}. First of all, we present the following radial type Lemma.
\begin{lemma}\label{radial lemma}
For each $u\in X^{1,p}_{\infty}(\alpha,\theta)$, $p\ge2$, we have
the inequality  $$|u(r)|^{p}\le \frac{C}{r^{\frac{\alpha+\theta(p-1)}{p}}}\|u\|^{p},
\,\;\forall\,\; r>0 $$ where $C>0$ depends only on $\alpha$, $p$ and
$\theta$. In addition, 
\begin{equation}\nonumber
 \lim_{r\rightarrow\infty} r^{\frac{\alpha+\theta(p-1)}{p}}|u(r)|^{p}\rightarrow 0.
\end{equation}
\end{lemma}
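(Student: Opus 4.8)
The plan is to establish the pointwise decay estimate by exploiting the fundamental theorem of calculus together with Hölder's inequality in the weighted setting, and then upgrade it to the limit statement by a standard absolute continuity argument. First I would fix $u\in X^{1,p}_{\infty}(\alpha,\theta)$ and recall that, by definition of the space, $u$ is locally absolutely continuous on $(0,\infty)$ with $\lim_{s\to\infty}u(s)=0$, so for every $r>0$ we may write
\[
u(r)=-\int_{r}^{\infty}u'(s)\,\mathrm{d}s.
\]
Applying Hölder's inequality with exponents $p$ and $p'=p/(p-1)$, inserting the weight $s^{\alpha}$ appropriately, we get
\[
|u(r)|\le \left(\int_{r}^{\infty}|u'(s)|^{p}s^{\alpha}\,\mathrm{d}s\right)^{1/p}\left(\int_{r}^{\infty}s^{-\alpha/(p-1)}\,\mathrm{d}s\right)^{1/p'}.
\]
The first factor is controlled by $\omega_{\alpha}^{-1/p}\|u'\|_{L^{p}_{\alpha}}\le \omega_{\alpha}^{-1/p}\|u\|$, and the second integral converges precisely because the Trudinger--Moser condition $\alpha=p-1$ forces $\alpha/(p-1)=1>$ wait --- here I must be careful: in general $\alpha/(p-1)$ need not exceed $1$, but the lemma's exponent $\frac{\alpha+\theta(p-1)}{p}$ suggests one should not integrate the full weight against $s^{-\alpha/(p-1)}$ naively; instead one splits $s^{\alpha}=s^{\alpha-\theta}\cdot s^{\theta}$ and distributes so that the $L^{p}_{\theta}$-norm of $u$ also enters.

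The cleaner route, which I would actually carry out, is to interpolate between the two pieces of the norm. Because $u(r)^{p}\le |u(r_{0})|^{p}$ is monotone-type control is unavailable without radial decreasing rearrangement, I instead estimate directly: for the tail bound, use $u(r)=-\int_{r}^{\infty}u'$, Hölder as above with the weight arranged as $s^{\alpha}=s^{\alpha}$, obtaining a bound $|u(r)|^{p}\le C r^{-(\alpha-p+1)/(p-1)\cdot(p-1)}\|u'\|_{L^{p}_{\alpha}}^{p}$; under $\alpha=p-1$ this exponent degenerates, so the correct device is to combine this with the $L^{p}_{\theta}$ information. Concretely, for $r$ bounded below one bounds $|u(r)|^{p}$ by a multiple of $\int_{r/2}^{r}|u|^{p}s^{\theta}\,\mathrm{d}s \cdot r^{-\theta-1}$ plus a gradient term via a mean-value/Sobolev argument on the annulus $(r/2,r)$, and the exponent $\frac{\alpha+\theta(p-1)}{p}$ emerges from balancing the scaling $\|u_{\tau}\|^{p}\sim \tau^{-\theta-1}\|u\|_{L^{p}_{\theta}}^{p}+\tau^{p-\alpha-1}\|u'\|_{L^{p}_{\alpha}}^{p}$ recorded in \eqref{rts-LpLq}. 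In fact the slickest proof is the scaling one: apply the \emph{already-known} bounded-domain pointwise estimate (or a one-dimensional Sobolev embedding on a fixed annulus) to the rescaled function $u_{\tau}(\cdot)=u(\tau\,\cdot)$ at $\tau=r$, read off $|u(r)|^{p}=|u_{r}(1)|^{p}\le C\|u_{r}\|^{p}$, and then expand $\|u_{r}\|^{p}$ using \eqref{rts-LpLq} to see the $r$-powers collect to exactly $r^{-(\alpha+\theta(p-1))/p}$ after using $\alpha=p-1$.

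For the limit statement $\lim_{r\to\infty}r^{(\alpha+\theta(p-1))/p}|u(r)|^{p}=0$, I would argue by density and continuity of the estimate: the map $u\mapsto \sup_{r\ge 1}r^{(\alpha+\theta(p-1))/p}|u(r)|^{p}$ is bounded by $C\|u\|^{p}$ from the first part, hence continuous on $X^{1,p}_{\infty}$; for $u$ with compact support in $(0,\infty)$ the quantity vanishes for $r$ large, trivially giving the limit; and compactly supported functions are dense in $X^{1,p}_{\infty}(\alpha,\theta)$ in the $\|\cdot\|$-norm. An $\varepsilon/2$ argument then yields the claim for all $u$. The main obstacle I anticipate is not conceptual but bookkeeping: making sure the exponent of $r$ that comes out of Hölder plus the scaling identity \eqref{rts-LpLq} is exactly $\frac{\alpha+\theta(p-1)}{p}$ and not an off-by-something power, and handling the two regimes $r$ small versus $r$ large (near $0$ the decay estimate is about the singular weight, near $\infty$ about the vanishing of $u$); one should also verify that density of compactly supported functions is legitimate here, which follows from truncating near $0$ and near $\infty$ and using $\lim_{r\to\infty}u(r)=0$ together with the absolute continuity built into the definition of $X^{1,p}_{\infty}$.
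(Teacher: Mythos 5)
Your proposal does not actually close the argument, and the gap is in the very step you flag as ``slick.'' Writing $u(r)=-\int_r^\infty u'$ and applying H\"older fails, as you yourself observe, because $\int_r^\infty s^{-\alpha/(p-1)}\,ds$ diverges when $\alpha=p-1$. The scaling route you then propose also does not produce the claimed exponent: applying a local bound to $u_r(\cdot)=u(r\,\cdot)$ at $s=1$ and expanding via \eqref{rts-LpLq} gives
$$|u(r)|^p\le C\bigl(r^{\,p-\alpha-1}\|u'\|^p_{L^p_\alpha}+r^{-\theta-1}\|u\|^p_{L^p_\theta}\bigr),$$
and the two powers $r^{\,p-\alpha-1}$ and $r^{-\theta-1}$ do \emph{not} both dominate $r^{-\frac{\alpha+\theta(p-1)}{p}}$ on $(0,\infty)$ (for $r\ge1$ one would need $\alpha-p\ge\theta$, which is false here). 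There is no free parameter left to ``balance'' since the rescaling factor is pinned to $\tau=r$, and an overall amplitude $\zeta$ cancels. So the statement as written is not reached by either of your two routes.

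The device the paper uses, and which your proposal never lands on, is to differentiate $|u|^p$ rather than $u$: from $|u(r)|^p=-\int_r^\infty \frac{d}{ds}|u(s)|^p\,ds\le p\int_r^\infty|u|^{p-1}|u'|\,ds$, one uses $r\le s$ to replace $r^{\frac{\alpha+\theta(p-1)}{p}}$ by $s^{\frac{\theta(p-1)}{p}}\,s^{\frac{\alpha}{p}}$ inside the integral, and then Young's inequality with exponents $\frac{p}{p-1}$ and $p$ splits the product as $|u|^p s^\theta+|u'|^p s^\alpha$. This simultaneously yields the pointwise bound with the correct exponent and, since the right-hand side is a tail $\int_r^\infty(\cdots)$ of convergent integrals, gives the limit $r^{\frac{\alpha+\theta(p-1)}{p}}|u(r)|^p\to0$ for free. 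Your density argument for the limit is not wrong, but it is superfluous once the bound is proved in the right form, and it does not help you obtain the bound itself. In short: you need the interpolation between the $u$-term and the $u'$-term to happen \emph{inside} the tail integral (via Young) rather than at the level of norms (via scaling), and that is the missing idea.
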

\begin{proof} Let $u\in X^{1,p}_{\infty}(\alpha,\theta)$ be arbitrary.  For any $r>0$, we have 
\begin{equation}\nonumber
\begin{aligned}
 |u(r)|^p =-\int_{r}^{\infty}\frac{d}{ds}\left(|u(s)|^{p}\right)\mathrm{d} s\le  p\int_{r}^{\infty}|u(s)|^{p-1}|u^{\prime}(s)|\,\mathrm{d} s. 
\end{aligned}
\end{equation}
Hence
\begin{equation}\nonumber
\begin{aligned}
r^{\frac{\alpha+\theta(p-1)}{p}}|u(r)|^p \le  p\int_{r}^{\infty}|u(s)|^{p-1}s^{\frac{\theta(p-1)}{p}}|u^{\prime}(s)|s^{\frac{\alpha}{p}}\,\mathrm{d} s 
\end{aligned}
\end{equation}
and the Young's inequality yields
\begin{equation}\nonumber
\begin{aligned}
r^{\frac{\alpha+\theta(p-1)}{p}} |u(r)|^p \le C\left[\int_{r}^{\infty}|u(s)|^{p}\mathrm{d}\lambda_{\theta}+\int_{r}^{\infty}|u^{\prime}(s)|^{p}\mathrm{d}\lambda_{\alpha}\right],
\end{aligned}
\end{equation}
for some $C>0$ depending only on $\alpha$, $p$ and
$\theta$. This proves the result.
\end{proof}
\subsection{Maximizers for the subcritical Trudinger-Moser inequality}
Let $(u_n)\subset X^{1,p}_{\infty}$ be a maximizing sequence to the subcritical Trudinger-Moser supremum $TMSC(\mu,\alpha,\theta)$. From Lemma~\ref{Lemma-norm1}, we may suppose that
\begin{equation}\nonumber
\begin{aligned}
&TMSC(\mu,\alpha,\theta)=\lim_{n}\int_{0}^{\infty}\varphi_{p}\left(\mu |u_n|^{\frac{p}{p-1}}\right)\mathrm{d}\lambda_{\theta}\\
&\|u^{\prime}_n\|_{L^{p}_{\alpha}}=\|u_n\|_{L^{p}_{\theta}}=1\\
&u_n\rightharpoonup u\;\;\mbox{weakly in}\;\;X^{1,p}_{\infty}.
\end{aligned}
\end{equation}
From the compact embedding \eqref{TM-compactembeddings}, we also may assume that 
\begin{equation}\label{cp-conseque1}
\begin{aligned}
&u_n\rightarrow u\;\;\mbox{in}\;L^{q}_{\theta},\,q>p\;\;\mbox{and}\;\;u_n(r)\rightarrow u(r) \;\;\mbox{a.e in}\;\; (0,\infty).
\end{aligned}
\end{equation}
Of course, we also have 
\begin{equation}\nonumber
\|u^{\prime}\|_{L^{p}_{\alpha}}\le 1,\;\;\; \|u\|_{L^{p}_{\theta}}\le 1.
\end{equation}
At this point we observe that there exist $C=C(p,\mu)>0$ such that
\begin{equation}\label{phi-phiconj}
\varphi_{p}\left(\mu t^{\frac{p}{p-1}}\right)-\frac{\mu^{k_0}}{k_0!}t^{\frac{k_0p}{p-1}}\le C\varphi_{p}\left(\mu t^{\frac{p}{p-1}}\right)t^{\frac{p}{p-1}},\quad t\ge 0.
\end{equation}
Let $\epsilon>0$ be arbitrary. From Lemma~\ref{radial lemma}, there exists $R>0$ such that $|u_n(r)|\le \epsilon$, for all $r\ge R$. Hence, from \eqref{phi-phiconj} and Theorem~\ref{AT-PAMS} we obtain 
\begin{equation}\nonumber
\begin{aligned}
 \int_{R}^{\infty}\left[\varphi_{p}\left(\mu |u_n|^{\frac{p}{p-1}}\right)-\frac{\mu^{k_0}}{k_0!}|u_n|^{\frac{k_0p}{p-1}}\right]\mathrm{d}\lambda_{\theta}&\le C(p,\mu) \int_{R}^{\infty}\varphi_{p}\left(\mu |u_n|^{\frac{p}{p-1}}\right)|u_n|^{\frac{p}{p-1}}\mathrm{d}\lambda_{\theta}\\
&\le C(p,\mu) \epsilon^{\frac{p}{p-1}}  \int_{R}^{\infty}\varphi_{p}\left(\mu |u_n|^{\frac{p}{p-1}}\right)\mathrm{d}\lambda_{\theta}\\
& \le C(p,\mu,\theta) \epsilon^{\frac{p}{p-1}}.
\end{aligned}
\end{equation}
Also, we have  (cf.\eqref{cp-conseque1})
\begin{equation}\nonumber
\varphi_{p}\left(\mu |u_n|^{\frac{p}{p-1}}\right)-\frac{\mu^{k_0}}{k_0!}|u_n|^{\frac{k_0p}{p-1}}\rightarrow \varphi_{p}\left(\mu |u|^{\frac{p}{p-1}}\right)-\frac{\mu^{k_0}}{k_0!}|u|^{\frac{k_0p}{p-1}}\;\;\mbox{a.e in }\;\; (0,R),\;\; \mbox{as}\quad n\rightarrow\infty.
\end{equation}
In addition,  by setting $v_n(r)=u_n(r)-u_n(R)$ for all $r\in (0,R)$, we have $v_n\in X^{1,p}_{R}(\alpha,\theta)$ with $\|v^{\prime}_n\|_{L^{p}_{\alpha}}\le 1$. Moreover, from Lemma~\ref{aux-lemma-ineq}, for any $q>1$
\begin{equation}\nonumber
\begin{aligned}
|u_n|^{\frac{p}{p-1}}
&\le q^{\frac{1}{p}}|v_n|^{\frac{p}{p-1}}+\left(1-q^{-\frac{p-1}{p}}\right)^{-\frac{1}{p-1}}\epsilon^{\frac{p}{p-1}}.
\end{aligned}
\end{equation}
By choosing $q>1$ close to $1$ such that $q^{(p+1)/p}\mu<\mu_{\alpha,\theta}$, Theorem~\ref{TheoremA}  yields
\begin{equation}\label{bound-ball}
\begin{aligned}
 \int_{0}^{R}\left[\varphi_{p}\left(\mu |u_n|^{\frac{p}{p-1}}\right)-\frac{\mu^{k_0}}{k_0!}|u_n|^{\frac{k_0p}{p-1}}\right]^{q}\mathrm{d}\lambda_{\theta} &\le \int_{0}^{R}\left[\varphi_{p}\left(\mu |u_n|^{\frac{p}{p-1}}\right)\right]^{q}\mathrm{d}\lambda_{\theta} \\
 & \le \int_{0}^{R}e^{q\mu |u_n|^{\frac{p}{p-1}}}\mathrm{d}\lambda_{\theta}\\
 & \le C(p,q,\alpha,\theta) \int_{0}^{R}e^{\mu_{\alpha,\theta} |v_n|^{\frac{p}{p-1}}}\mathrm{d}\lambda_{\theta} \\
& \le C(p,q,\mu,\theta, R).
\end{aligned}
\end{equation}
Thus, we may use  Vitali's convergence theorem to obtain
\begin{equation}\nonumber
\begin{aligned}
 \lim_{n\rightarrow\infty}\int_{0}^{R}\left[\varphi_{p}\left[\mu |u_n|^{\frac{p}{p-1}}\right)-\frac{\mu^{k_0}}{k_0!}|u_n|^{\frac{k_0p}{p-1}}\right]\mathrm{d}\lambda_{\theta}=\int_{0}^{R}\left[\varphi_{p}\left(\mu |u|^{\frac{p}{p-1}}\right)-\frac{\mu^{k_0}}{k_0!}|u|^{\frac{k_0p}{p-1}}\right]\mathrm{d}\lambda_{\theta}.
\end{aligned}
\end{equation}
Now, using  the Brezis-Lieb lemma together with \eqref{cp-conseque1} we have
\begin{equation}\nonumber
\lim_{n\rightarrow\infty}\int_{0}^{\infty}|u_n|^{\frac{k_0p}{p-1}}\mathrm{d}\lambda_{\theta}=\left\{\begin{aligned}
 & \int_{0}^{\infty}|u|^{\frac{k_0p}{p-1}}\mathrm{d}\lambda_{\theta},\quad&\mbox{if}&\quad k_0>p-1\\
 & 1,\quad&\mbox{if}&\quad k_0=p-1.\\
\end{aligned}\right.
\end{equation}
Hence, if $k_0>p-1$
\begin{equation}\nonumber
\begin{aligned}
TMSC(\mu,\alpha,\theta)&=\lim_{n}\int_{0}^{\infty}\varphi_{p}\left(\mu |u_n|^{\frac{p}{p-1}}\right)\mathrm{d}\lambda_{\theta}\\
&=\lim_{n}\left[\int_{0}^{\infty}\left(\varphi_{p}\left(\mu |u_n|^{\frac{p}{p-1}}\right)-\frac{\mu^{k_0}}{k_0!}|u_n|^{\frac{k_0p}{p-1}}\right)\mathrm{d}\lambda_{\theta}+\frac{\mu^{k_0}}{k_0!}\int_{0}^{\infty}|u_n|^{\frac{k_0p}{p-1}}\mathrm{d}\lambda_{\theta}\right]\\
&\le \int_{0}^{R}\left(\varphi_{p}\left(\mu |u|^{\frac{p}{p-1}}\right)-\frac{\mu^{k_0}}{k_0!}|u|^{\frac{k_0p}{p-1}}\right)\mathrm{d}\lambda_{\theta}+C(p,\mu,\theta) \epsilon^{\frac{p}{p-1}}+\frac{\mu^{k_0}}{k_0!}\int_{0}^{\infty}|u|^{\frac{k_0p}{p-1}}\mathrm{d}\lambda_{\theta}\\
& \le \int_{0}^{\infty}\varphi_{p}\left(\mu |u|^{\frac{p}{p-1}}\right)\mathrm{d}\lambda_{\theta}+C(p,\mu,\theta) \epsilon^{\frac{p}{p-1}}.
\end{aligned}
\end{equation}
Setting $\epsilon\rightarrow 0$, we have
\begin{equation}\nonumber
\begin{aligned}
TMSC(\mu,\alpha,\theta)
& \le\int_{0}^{\infty}\varphi_{p}\left(\mu |u|^{\frac{p}{p-1}}\right)\mathrm{d}\lambda_{\theta}.
\end{aligned}
\end{equation}
It follows that $0<\|u\|_{L^p_{\theta}}\le 1$ and thus
\begin{equation}\nonumber
\begin{aligned}
TMSC(\mu,\alpha,\theta)
&\le \frac{1}{\|u\|^{p}_{L^{p}_{\theta}}}\int_{0}^{\infty}\varphi_{p}\left(\mu |u|^{\frac{p}{p-1}}\right)\mathrm{d}\lambda_{\theta}
\end{aligned}
\end{equation}
which completes the proof in the case $k_0>p-1$. If $k_0=p-1$, we can write
\begin{equation}\nonumber
\begin{aligned}
TMSC(\mu,\alpha,\theta)&=\lim_{n}\left[\int_{0}^{\infty}\left(\varphi_{p}\left(\mu |u_n|^{\frac{p}{p-1}}\right)-\frac{\mu^{k_0}}{k_0!}|u_n|^{p}\right)\mathrm{d}\lambda_{\theta}+\frac{\mu^{k_0}}{k_0!}\right]\\
&\le \int_{0}^{R}\left(\varphi_{p}\left(\mu |u|^{\frac{p}{p-1}}\right)-\frac{\mu^{k_0}}{k_0!}|u|^{p}\right)\mathrm{d}\lambda_{\theta}+C(p,\mu,\theta) \epsilon^{\frac{p}{p-1}}+\frac{\mu^{k_0}}{k_0!}\\
& \le  \int_{0}^{\infty}\left(\varphi_{p}\left(\mu |u|^{\frac{p}{p-1}}\right)-\frac{\mu^{k_0}}{k_0!}|u|^{p}\right)\mathrm{d}\lambda_{\theta}+C(p,\mu,\theta) \epsilon^{\frac{p}{p-1}}+\frac{\mu^{k_0}}{k_0!}.
\end{aligned}
\end{equation}
Letting $\epsilon\rightarrow0$, it follows that 
\begin{equation}\label{ATphi-term}
\begin{aligned}
TMSC(\mu,\alpha,\theta)
& \le  \int_{0}^{\infty}\left(\varphi_{p}\left(\mu |u|^{\frac{p}{p-1}}\right)-\frac{\mu^{k_0}}{k_0!}|u|^{p}\right)\mathrm{d}\lambda_{\theta}+\frac{\mu^{k_0}}{k_0!}.
\end{aligned}
\end{equation}
Moreover, for any $w\in X^{1,p}_{\infty}(\alpha,\theta)$ with $\|w^{\prime}\|_{L^{p}_{\alpha}}=\|w\|_{L^{p}_{\theta}}=1$ we have 
\begin{equation}\nonumber
\int_{0}^{\infty}\varphi_{p}\left(\mu |w|^{\frac{p}{p-1}}\right)\mathrm{d}\lambda_{\theta}\ge \frac{\mu^{k_0}}{k_0!}\int_{0}^{\infty}|w|^{p}\mathrm{d}\lambda_{\theta}+\frac{\mu^{k_0+1}}{(k_0+1)!}\int_{0}^{\infty}|w|^{\frac{p(k_0+1)}{p-1}}\mathrm{d}\lambda_{\theta}.
\end{equation}
This implies that  $TMSC(\mu,\alpha,\theta)>\frac{\mu^{k_0}}{k_0!}$. Thus, from \eqref{ATphi-term}, we get $0<\|u\|_{L^{p}_{\theta}}\le 1$
and
\begin{equation}\nonumber
\begin{aligned}
TMSC(\mu,\alpha,\theta)
& \le \frac{1}{\|u\|^{p}_{L^{p}_{\theta}}} \int_{0}^{\infty}\left(\varphi_{p}\left(\mu |u|^{\frac{p}{p-1}}\right)-\frac{\mu^{k_0}}{k_0!}|u|^{p}\right)\mathrm{d}\lambda_{\theta}+\frac{\mu^{k_0}}{k_0!}\\
&=\frac{1}{\|u\|^{p}_{L^{p}_{\theta}}}\int_{0}^{\infty}\varphi_{p}\left(\mu |u|^{\frac{p}{p-1}}\right)\mathrm{d}\lambda_{\theta}
\end{aligned}
\end{equation} 
and the result is proved.
\subsection{Maximizers for the critical Trudinger-Moser  inequality}
Next we combine the equivalence in the Theorem~\ref{thm-MR} and the Theorem~\ref{thm-AT-Maximizers} to demonstrate Theorem~\ref{thm-MR-Maximizers}. Firstly, for $0<s<\mu_{\alpha,\theta}$, we set
\begin{equation}\nonumber
f(s)= TMSC(s,\alpha,\theta)\quad\mbox{and}\quad g(s)=TMC(s, \alpha,\theta).
\end{equation}
Hence, Theorem~\ref{thm-MR} yields
\begin{equation}\label{gf-relation}
g(\sigma)=\sup_{s \in (0,\sigma)}\left(\frac{1-\left(\frac{s}{\sigma}\right)^{p-1}}{\left(\frac{s}{\sigma}\right)^{p-1}}\right)f(s).
\end{equation}
\begin{lemma}\label{f-continuous} $f$ is a continuous function on $(0,\mu_{\alpha,\theta})$.
\end{lemma}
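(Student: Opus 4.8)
\emph{Proof proposal.} The plan is to prove that $f$ is a \emph{finite convex} function on the open interval $(0,\mu_{\alpha,\theta})$ and then to conclude by the elementary fact that a real-valued convex function on an open interval is continuous (indeed locally Lipschitz). Finiteness of $f$ on $(0,\mu_{\alpha,\theta})$ is already in hand: it is precisely the content of Remark~\ref{remark-final} (a byproduct of the upper bound established in Theorem~\ref{thm-AT}), which gives
$f(\mu)=TMSC(\mu,\alpha,\theta)\le C(\alpha,\theta)\big(1-(\mu/\mu_{\alpha,\theta})^{p-1}\big)^{-1}<\infty$ for every $\mu\in(0,\mu_{\alpha,\theta})$.

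For the convexity I would first invoke Lemma~\ref{Lemma-norm1} to write
\[
f(\mu)=\sup_{u}\int_{0}^{\infty}\varphi_{p}\!\left(\mu|u|^{\frac{p}{p-1}}\right)\mathrm{d}\lambda_{\theta},
\]
the supremum being taken over $u\in X^{1,p}_{\infty}(\alpha,\theta)$ with $\|u^{\prime}\|_{L^{p}_{\alpha}}=\|u\|_{L^{p}_{\theta}}=1$. For each fixed such $u$ and each fixed $r>0$, the map $\mu\mapsto\varphi_{p}(\mu|u(r)|^{p/(p-1)})$ is convex on $[0,\infty)$: since $\varphi_{p}$ has nonnegative Taylor coefficients by \eqref{exp-frac}, one has $\varphi_{p}^{\prime\prime}\ge0$ on $[0,\infty)$, so $\varphi_{p}$ is convex there, and here it is composed with the affine map $\mu\mapsto\mu|u(r)|^{p/(p-1)}$, whose image stays in $[0,\infty)$. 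Integration against the positive measure $\mathrm{d}\lambda_{\theta}$ preserves convexity, so $\mu\mapsto\int_{0}^{\infty}\varphi_{p}(\mu|u|^{p/(p-1)})\,\mathrm{d}\lambda_{\theta}$ is convex on $[0,\infty)$ for every admissible $u$; and a pointwise supremum of a family of convex functions is convex. Hence $f$ is convex on $(0,\mu_{\alpha,\theta})$, and being finite there it is continuous, which is the assertion of the lemma.

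I do not anticipate any genuine obstacle in carrying this out; the only two points that deserve a line of justification are the convexity of $\varphi_{p}$ on $[0,\infty)$ (immediate from the nonnegativity of its Taylor coefficients) and the finiteness of $f$ (already secured via Remark~\ref{remark-final}). In particular, finiteness of each individual integral $\int_{0}^{\infty}\varphi_{p}(\mu|u|^{p/(p-1)})\,\mathrm{d}\lambda_{\theta}$ is not logically needed for the convexity step, only for the finiteness assertion. Should one prefer to avoid convexity, an alternative route is: $f$ is nondecreasing and lower semicontinuous (being a supremum of the continuous functions $\mu\mapsto\int_{0}^{\infty}\varphi_{p}(\mu|u|^{p/(p-1)})\,\mathrm{d}\lambda_{\theta}$), and its right upper semicontinuity follows from the pointwise bound $\varphi_{p}(\mu t)\le(1-\lambda)\varphi_{p}(\mu_{0}t)+\lambda\varphi_{p}(\mu_{1}t)$ for $\mu=(1-\lambda)\mu_{0}+\lambda\mu_{1}$ with $\mu_{0}<\mu<\mu_{1}<\mu_{\alpha,\theta}$, combined with Remark~\ref{remark-final} applied at $\mu_{1}$; but the convexity argument is the shortest and most transparent, so that is the one I would write down.
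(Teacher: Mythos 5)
Your proof is correct, and it takes a genuinely different and more elementary route than the paper. The paper proves continuity directly by a compactness argument: it invokes Theorem~\ref{thm-AT-Maximizers} to pick maximizers $u_n$ for $f(s+\epsilon_n)$, bounds $f(s+\epsilon_n)-f(s)$ by $\int_0^\infty[\varphi_p((s+\epsilon_n)|u_n|^{p/(p-1)})-\varphi_p(s|u_n|^{p/(p-1)})]\,\mathrm{d}\lambda_\theta$, and then splits at a large radius $R$, using Lemma~\ref{radial lemma}, Theorem~\ref{TheoremA}, and Vitali's convergence theorem on $(0,R)$ and a uniform tail bound on $(R,\infty)$, to show the difference vanishes. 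Your argument bypasses all of that: since $\varphi_p$ has nonnegative Taylor coefficients it is convex on $[0,\infty)$, so for each admissible $u$ the map $\mu\mapsto\int_0^\infty\varphi_p(\mu|u|^{p/(p-1)})\,\mathrm{d}\lambda_\theta$ is convex (as an extended-real-valued function), hence so is the supremum $f$; finiteness on $(0,\mu_{\alpha,\theta})$ is Remark~\ref{remark-final}, and a finite convex function on an open interval is automatically locally Lipschitz, hence continuous. Your route requires neither the existence of maximizers nor compact embeddings nor Vitali's theorem, only Lemma~\ref{Lemma-norm1} and the upper bound already established in Theorem~\ref{thm-AT}. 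The paper's approach has the incidental advantage that the maximizers of $TMSC$ are needed anyway for the subsequent construction of a maximizer for $TMC$ (the argument right after Lemma~\ref{f-continuous}), so the authors had them at hand; but for the continuity statement itself your convexity argument is shorter, cleaner, and logically lighter.
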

\begin{proof}
By using Theorem~\ref{thm-AT-Maximizers}  we can pick $\epsilon_n \downarrow0$ and $u_n\in X^{1,p}_{\infty}$, with  $\|u_{n}^{\prime}\|_{L^{p}_{\alpha}}\le 1$ and $\|u_n\|_{L^{p}_{\theta}}=1$ such that 
\begin{equation}\nonumber
f(s+\epsilon_{n})=\int_{0}^{\infty}\varphi_{p}\left((s+\epsilon_n)|u_n|^{\frac{p}{p-1}}\right)\mathrm{d}\lambda_{\theta}.
\end{equation}
Then
\begin{equation}\label{fleft}
\begin{aligned}
0&\le  f(s+\epsilon_{n})- f(s)\\
&\le \int_{0}^{\infty}\left[\varphi_{p}\left((s+\epsilon_n)|u_n|^{\frac{p}{p-1}}\right)-\varphi_{p}\left(s|u_n|^{\frac{p}{p-1}}\right)\right]\mathrm{d}\lambda_{\theta}.
\end{aligned}
\end{equation}
Without loos of generality,  we also may assume that (cf. \eqref{TM-compactembeddings})
\begin{equation}\label{cp-Critical-conseq}
\begin{aligned}
&u_n\rightharpoonup u\;\;\mbox{weakly in}\;\;X^{1,p}_{\infty}\\
&u_n\rightarrow u\;\;\mbox{in}\;L^{q}_{\theta},\,q>p\;\;\mbox{and}\;\;u_n(r)\rightarrow u(r) \;\;\mbox{a.e in}\;\; (0,\infty).
\end{aligned}
\end{equation}
In particular, 
\begin{equation}\nonumber
\begin{aligned}
\varphi_{p}\left((s+\epsilon_n)|u_n(r)|^{\frac{p}{p-1}}\right)-\varphi_{p}\left(s|u_n(r)|^{\frac{p}{p-1}}\right)\rightarrow 0 \;\;\mbox{a.e in}\;\; (0,\infty).
\end{aligned}
\end{equation}
In the same way of \eqref{bound-ball}, we can use Lemma~\ref{radial lemma} and Theorem~\ref{TheoremA}  to obtain a positive constant $C(p,q,s,\theta, R)$ such that
\begin{equation}\nonumber
\begin{aligned}
\int_{0}^{R}\left[\varphi_{p}\left((s+\epsilon_n)|u_n|^{\frac{p}{p-1}}\right)-\varphi_{p}\left(s|u_n|^{\frac{p}{p-1}}\right)\right]^q\mathrm{d}\lambda_{\theta}\le C(p,q,s,\theta, R),
\end{aligned}
\end{equation}
for some  $q>1$ and for all $R>0$. It follows that
\begin{equation}\nonumber
\begin{aligned}
\int_{0}^{R}\left[\varphi_{p}\left((s+\epsilon_n)|u_n|^{\frac{p}{p-1}}\right)-\varphi_{p}\left(s|u_n|^{\frac{p}{p-1}}\right)\right]\mathrm{d}\lambda_{\theta}\rightarrow 0.
\end{aligned}
\end{equation}
On the other hand, for $R$ large enough,  Lemma~\ref{radial lemma} yields
\begin{equation}\nonumber
|u_n(r)|\le 1,\;\; \mbox{for every}\;\; n\in\mathbb{N},\;\; r\ge R.
\end{equation}
Then
\begin{equation}\nonumber
\begin{aligned}
&\int_{R}^{\infty}\left[\varphi_{p}\left((s+\epsilon_n)|u_n|^{\frac{p}{p-1}}\right)-\varphi_{p}\left(s|u_n|^{\frac{p}{p-1}}\right)\right]\mathrm{d}\lambda_{\theta}\\
&=\int_{R}^{\infty} \sum_{j\in\mathbb{N}\;:\; j\ge p-1}\left[\frac{(s+\epsilon_n)^{j}}{j!}-\frac{s^{j}}{j!}\right]|u_n|^{\frac{jp}{p-1}}\mathrm{d}\lambda_{\theta}\\
&\le \sum_{j\in\mathbb{N}\;:\; j\ge p-1}\left[\frac{(s+\epsilon_n)^{j}}{j!}-\frac{s^{j}}{j!}\right] \int_{R}^{\infty} |u_n|^{p}\mathrm{d}\lambda_{\theta}\\
&\le \left[\varphi_{p}\left(s+\epsilon_n\right)-\varphi_{p}\left(s\right)\right]\rightarrow 0.
\end{aligned}
\end{equation}
From \eqref{fleft}, we obtain  
$$
0\le  f(s+\epsilon_{n})- f(s)\rightarrow 0, \quad\mbox{as}\;\; n\rightarrow\infty.
$$
Similarly, we can also have that 
$$
0\le  f(s)- f(s-\epsilon_n)\rightarrow 0, \quad\mbox{as}\;\; n\rightarrow\infty.
$$
\end{proof}
\\
Now, in order to ensure the existence of an extremal function for  $TMC(\sigma, \alpha,\theta)$ when $0<\sigma<\mu_{\alpha,\theta}$ it is sufficient to show that 
\begin{equation}\label{closeleft}
\begin{aligned}
\limsup_{s\rightarrow 0^+}\left(\frac{1-\left(\frac{s}{\sigma}\right)^{p-1}}{\left(\frac{s}{\sigma}\right)^{p-1}}\right)f(s)<g(\sigma)
\end{aligned}
\end{equation}
and 
\begin{equation}\label{closeright}
\begin{aligned}
\limsup_{s\rightarrow \sigma^{-}}\left(\frac{1-\left(\frac{s}{\sigma}\right)^{p-1}}{\left(\frac{s}{\sigma}\right)^{p-1}}\right)f(s)<g(\sigma).
\end{aligned}
\end{equation}
Indeed, \eqref{closeleft}, \eqref{closeright} together with \eqref{gf-relation} and Lemma~\ref{f-continuous} ensure the existence of $s_{\sigma}\in (0,\sigma)$ such that  
\begin{equation}\label{gf-attained}
\begin{aligned}
g(\sigma)=\left(\frac{1-\left(\frac{s}{\sigma}\right)^{p-1}}{\left(\frac{s_{\sigma}}{\sigma}\right)^{p-1}}\right)f(s_{\sigma}).
\end{aligned}
\end{equation}
Let $u_{\sigma}$ be an extremal function for $TMSC(s_{\sigma},\alpha,\theta)$ ensured by Theorem~\ref{thm-AT-Maximizers}.  Set
$$
v_{\sigma}(r)=\left(\frac{s_{\sigma}}{\sigma}\right)^{\frac{p-1}{p}}u_{\sigma}(\tau r)
$$
where
$$
\tau=\left(\frac{\left(\frac{s_{\sigma}}{\sigma}\right)^{p-1}\|u_{\sigma}\|^{p}_{L^{p}_{\theta}}}{1-\left(\frac{s_{\sigma}}{\sigma}\right)^{p-1}}\right)^{\frac{1}{\theta+1}}.
$$
From \eqref{rts-LpLq}, it follows that 
\begin{equation}\nonumber
\begin{aligned}
\|v_{\sigma}\|^{p}=\|v^{\prime}_{\sigma}\|^{p}_{L^{p}_{\alpha}}+\|v_{\sigma}\|^{p}_{L^{p}_{\theta}}&=\left(\frac{s_{\sigma}}{\sigma}\right)^{p-1}\left[\|u^{\prime}_{\sigma}\|^{p}_{L^{p}_{\alpha}}+\tau^{-(\theta+1)}\|u_{\sigma}\|^{p}_{L^{p}_{\theta}}\right]\le 1
\end{aligned}
\end{equation}
and also we have (cf.\eqref{gf-attained})
\begin{equation}\nonumber
\begin{aligned}
TMC(\sigma, \alpha,\theta)
&=\left(\frac{1-\left(\frac{s}{\sigma}\right)^{p-1}}{\left(\frac{s_{\sigma}}{\sigma}\right)^{p-1}}\right)\frac{\tau^{\theta+1}}{\|u_{\sigma}\|^{p}_{L^{p}_{\theta}}}\int_{0}^{\infty}\varphi_{p}\left(\sigma|v_{\sigma}|^{\frac{p}{p-1}}\right)\mathrm{d}\lambda_{\theta}\\
& =\int_{0}^{\infty}\varphi_{p}\left(\sigma|v_{\sigma}|^{\frac{p}{p-1}}\right)\mathrm{d}\lambda_{\theta}.
\end{aligned}
\end{equation}
Hence, $v_{\sigma}$ is an extremal function  of $TMC(\sigma, \alpha,\theta)$.
Now, since
\begin{equation}\nonumber
\begin{aligned}
\limsup_{s\rightarrow \sigma^{-}}\left(\frac{1-\left(\frac{s}{\sigma}\right)^{p-1}}{\left(\frac{s}{\sigma}\right)^{p-1}}\right)f(s)=0<g(\sigma)
\end{aligned}
\end{equation}
it is clear that \eqref{closeright} holds.

Next, we will prove that  \eqref{closeleft} holds. Firstly, we provide the following useful estimate.
\begin{lemma}\label{TMLq} For all $q\ge p$ and $0<\mu<\mu_{\alpha,\theta}$ we have
\begin{equation}\nonumber
\sup_{\|u^{\prime}\|_{L^{p}_{\alpha}}\le 1,\;  \|u\|^{p}_{L^{p}_{\theta}}=1}\int_{0}^{\infty}\left(e^{\mu |u|^{\frac{p}{p-1}}}\right)|u|^{q}\mathrm{d}\lambda_{\theta}\le c
\end{equation}
for some $c=c(\mu,\alpha,\theta,q)>0$.
\end{lemma}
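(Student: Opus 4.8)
The plan is to bound the integral by splitting the half‑line according to whether $|u|\le 1$ or $|u|>1$, and to reduce the genuinely exponential part to the subcritical inequality of Theorem~\ref{AT-PAMS}. Fix $q\ge p$ and $0<\mu<\mu_{\alpha,\theta}$, and let $u\in X^{1,p}_{\infty}(\alpha,\theta)$ be admissible, that is $\|u^{\prime}\|_{L^{p}_{\alpha}}\le 1$ and $\|u\|^{p}_{L^{p}_{\theta}}=1$.

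First I would dispose of the easy region $\{|u|\le 1\}$: there $e^{\mu|u|^{p/(p-1)}}\le e^{\mu}$ and $|u|^{q}\le |u|^{p}$, so this part contributes at most $e^{\mu}\int_{0}^{\infty}|u|^{p}\,\mathrm{d}\lambda_{\theta}=e^{\mu}$. On the region $\{|u|>1\}$ the idea is to absorb the polynomial factor into a marginally larger exponential: choosing $\mu^{\prime}\in(\mu,\mu_{\alpha,\theta})$ and using that $t^{q}\le C_{1}\,e^{(\mu^{\prime}-\mu)t^{p/(p-1)}}$ for all $t\ge 0$ (the exponential beats every power), one gets $e^{\mu|u|^{p/(p-1)}}|u|^{q}\le C_{1}\,e^{\mu^{\prime}|u|^{p/(p-1)}}$ there. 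Next I would split $e^{\mu^{\prime}|u|^{p/(p-1)}}=\varphi_{p}\!\left(\mu^{\prime}|u|^{p/(p-1)}\right)+\sum_{k=0}^{k_{0}-1}\frac{(\mu^{\prime})^{k}}{k!}|u|^{\frac{kp}{p-1}}$. Since $k_{0}-1<p-1$ by the very definition of $k_{0}$, every exponent $\frac{kp}{p-1}$ with $k\le k_{0}-1$ is strictly below $p$, so $|u|^{kp/(p-1)}\le |u|^{p}$ on $\{|u|>1\}$; integrating, the polynomial part contributes at most $\left(\sum_{k=0}^{k_0-1}\frac{(\mu^{\prime})^k}{k!}\right)\|u\|^{p}_{L^{p}_{\theta}}\le e^{\mu^{\prime}}$. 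For the leftover $\varphi_{p}$ term I would drop the restriction to $\{|u|>1\}$ and apply Theorem~\ref{AT-PAMS} with the admissible subcritical value $\mu^{\prime}<\mu_{\alpha,\theta}$, obtaining $\int_{0}^{\infty}\varphi_{p}\!\left(\mu^{\prime}|u|^{p/(p-1)}\right)\mathrm{d}\lambda_{\theta}\le C_{p,\mu^{\prime},\theta}\|u\|^{p}_{L^{p}_{\theta}}=C_{p,\mu^{\prime},\theta}$. Adding the three contributions yields a bound $c=e^{\mu}+C_{1}\left(e^{\mu^{\prime}}+C_{p,\mu^{\prime},\theta}\right)$ that does not depend on the particular admissible $u$, which is exactly the claim.

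I expect the only delicate point to be the handling of $\{|u|>1\}$: on the whole half‑line $\int_{0}^{\infty}e^{\mu^{\prime}|u|^{p/(p-1)}}\mathrm{d}\lambda_{\theta}$ need not be finite, so one cannot use the bare exponential — it is essential to peel off the finitely many lower‑order monomials $|u|^{kp/(p-1)}$, $k\le k_{0}-1$, notice that their exponents stay strictly below $p$ so that they are dominated by $|u|^{p}\in L^{1}_{\theta}$ on $\{|u|>1\}$, and only then invoke Theorem~\ref{AT-PAMS} for the truly exponential remainder $\varphi_p$. No rearrangement step is needed here, since Theorem~\ref{AT-PAMS} already applies to every $u\in X^{1,p}_{\infty}(\alpha,\theta)$ with $\|u^{\prime}\|_{L^{p}_{\alpha}}\le 1$; this lemma will then be used to verify estimate \eqref{closeleft}.
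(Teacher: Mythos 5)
Your proof is correct, and it follows a genuinely different path from the paper's. Both proofs dispose of the region $\{|u|\le 1\}$ in the same elementary way, but the treatment of $\{|u|>1\}$ diverges sharply. The paper first invokes the P\'{o}lya-Szeg\"{o} inequality to replace $u$ by a non-increasing function, so that $\{u\ge 1\}$ becomes an interval $(0,R_u)$; it then translates, setting $v=u-1\in X^{1,p}_{R_u}(\alpha,\theta)$, uses Lemma~\ref{aux-lemma-ineq} to compare $|u|^{p/(p-1)}$ to $(1+\epsilon)^{1/p}|v|^{p/(p-1)}$, applies H\"older with a carefully chosen exponent $\eta>1$ to peel the factor $|u|^q$ off the exponential, and finally bounds the remaining exponential integral over $(0,R_u)$ via the bounded-domain inequality of Theorem~\ref{TheoremA} together with the measure estimate $|B_{R_u}|_\theta\le 1$. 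In effect the paper re-runs inline the machinery of Theorem~\ref{thm-AT}. You instead absorb the power $|u|^q$ into a marginally larger exponential with parameter $\mu'\in(\mu,\mu_{\alpha,\theta})$, split $e^{\mu'|u|^{p/(p-1)}}$ into $\varphi_p(\mu'|u|^{p/(p-1)})$ plus the finitely many low-order monomials with exponents $\tfrac{kp}{p-1}<p$ (correctly using $k_0-1<p-1$), dominate those monomials by $|u|^p\in L^1_\theta$ on $\{|u|>1\}$, and invoke the whole-space subcritical inequality Theorem~\ref{AT-PAMS} directly for the $\varphi_p$ remainder. Your route avoids rearrangement, translation, and H\"older entirely, and is more modular in that it treats Theorem~\ref{AT-PAMS} as a black box; the paper's version is self-contained in that it does not rely on the Adachi-Tanaka inequality but redoes its proof. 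Both deliver a bound $c=c(\mu,\alpha,\theta,q)$, since your auxiliary constant $C_1$ depends on $q$, $p=\alpha+1$, and $\mu'-\mu$, and $C_{p,\mu',\theta}$ depends on the fixed choice of $\mu'$, for instance $\mu'=(\mu+\mu_{\alpha,\theta})/2$.
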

\begin{proof}
We can proceed analogous to Theorem~\ref{thm-AT}.  Indeed,  let $u\in X^{1,p}_{\infty}\setminus\left\{0\right\}$, with  $\|u^{\prime}\|_{L^{p}_{\alpha}}\le 1$ and $ \|u\|^{p}_{L^{p}_{\theta}}=1$. From the P\'{o}lya-Szeg\"{o} inequality obtained in \cite{AbreFern}, we can assume that $u$ is a non-increasing function.  We write
\begin{equation}\nonumber
\int_{0}^{\infty}\left(e^{\mu |u|^{\frac{p}{p-1}}}\right)|u|^{q}\mathrm{d}\lambda_{\theta}=\int_{\left\{u<1\right\}}\left(e^{\mu |u|^{\frac{p}{p-1}}}\right)|u|^{q}\mathrm{d}\lambda_{\theta}+\int_{\left\{u\ge 1\right\}}\left(e^{\mu |u|^{\frac{p}{p-1}}}\right)|u|^{q}\mathrm{d}\lambda_{\theta}.
\end{equation}
Of course we have 
\begin{equation}\nonumber
\int_{\left\{u<1\right\}}\left(e^{\mu |u|^{\frac{p}{p-1}}}\right)|u|^{q}\mathrm{d}\lambda_{\theta}\le e^{\mu}\int_{\left\{u<1\right\}}|u|^{q}\mathrm{d}\lambda_{\theta}\le e^{\mu}\int_{\left\{u<1\right\}}|u|^{p}\mathrm{d}\lambda_{\theta}\le e^{\mu}.
\end{equation}
Set
\begin{equation}\nonumber
I_u=\left\{r>0\;:\; u(r)\ge1\right\}.
\end{equation}
Without loss of generality, we can assume $I_u\not=\emptyset$. Thus, there is $R_u>0$ such that $I_u=(0,R_u)$. Now, if 
$$
v(r)=u(r)-1,\;\; r\in (0,R_u)
$$
we have $v\in X^{1,p}_{R_u}(\alpha,\theta)$ and $\|v^{\prime}\|_{L^{p}_{\alpha}}\le 1$.  Also,  from  Lemma~\ref{aux-lemma-ineq}  we have
\begin{equation}\nonumber
\begin{aligned}
|u|^{\frac{p}{p-1}}&\le (1+\epsilon)^{\frac{1}{p}}|v|^{\frac{p}{p-1}}+c_1(\epsilon,\alpha,\theta),
\end{aligned}
\end{equation}
for some $c_1=c_1(\epsilon,\alpha,\theta)>0$. Hence, by choosing $\epsilon>0$ small enough and $\eta>1$ such that  $\mu(1+\epsilon)^{\frac{1}{p}}\frac{\eta}{\eta-1}\le \mu_{\alpha,\theta}$, the H\"{o}lder inequality and  
 Theorem~\ref{TheoremA}  imply
\begin{equation}\label{TMLp-final}
\begin{aligned}
\int_{0}^{R_u}e^{\mu|u|^{\frac{p}{p-1}}}|u|^q\mathrm{d}\lambda_{\theta}
&\le\left(\int_{0}^{R_u}|u|^{\eta q}\mathrm{d}\lambda_{\theta}\right)^{\frac{1}{\eta}}\left(\int_{0}^{R_u}e^{\frac{\eta\mu}{\eta-1}|u|^{\frac{p}{p-1}}}\mathrm{d}\lambda_{\theta}\right)^{\frac{\eta-1}{\eta}}\\
&\le C(\epsilon,\alpha,\theta,\eta,\mu)\|u\|^{q}_{L^{\eta q}_{\theta}}\left(\int_{0}^{R_u}e^{\mu_{\alpha,\theta}|v|^{\frac{p}{p-1}}}\mathrm{d}\lambda_{\theta}\right)^{\frac{\eta-1}{\eta}}\\
&\le C(\epsilon,\alpha,\theta,\eta,\mu)\|u\|^{q}_{L^{\eta q}_{\theta}}\left(|B_{R_u}|_{\theta}\right)^{\frac{\eta-1}{\eta}}.
\end{aligned}
\end{equation}
Finally, since 
\begin{equation}\nonumber
|B_{R_{u}}|_{\theta}=\int_{0}^{R_u}\mathrm{d} \lambda_{\theta}\le \int_{0}^{R_u}|u|^{p}\lambda_{\theta}\le \|u\|_{L^{p}_{\theta}}=1
\end{equation}
and (cf. \eqref{TM-compactembeddings})  
$$
\|u\|^{q}_{L^{\eta q}_{\theta}}\le C\|u\|^q\le C(\alpha,\theta, q,\eta)
$$ 
the inequality \eqref{TMLp-final} gives the result.
\end{proof}

Since we are supposing 
$
TMC(\sigma, \alpha,\theta)>\frac{\sigma^{k_0}}{k_0!},
$
when $k_0=p-1$, to complete the proof  of \eqref{closeleft}, and then the proof  of Theorem~\ref{thm-MR-Maximizers},  it is now enough to prove the following:
\begin{lemma}\label{MTcotaS} For each $0<\sigma<\mu_{\alpha,\theta}$, we have 
\begin{equation}\nonumber
\limsup_{s\rightarrow 0^+}\left[\frac{1-\left(\frac{s}{\sigma}\right)^{p-1}}{\left(\frac{s}{\sigma}\right)^{p-1}}\right]f(s)\;\;\left\{\begin{aligned}
&=0,\;\;&\mbox{if}&\;\; k_0>p-1 \\
&\le \frac{\sigma^{k_0}}{k_0!},\;\;&\mbox{if}&\;\; k_0=p-1.
\end{aligned}\right.
\end{equation}
\end{lemma}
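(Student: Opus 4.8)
The plan is to isolate the lowest-order term $\frac{s^{k_0}}{k_0!}\,|u|^{k_0p/(p-1)}$ inside the Taylor series of $\varphi_p$, whose behaviour as $s\to 0^+$ dictates the whole limit, and to absorb the remaining tail by Lemma~\ref{TMLq}. First I would fix $s_0\in(0,\mu_{\alpha,\theta})$ and take $u\in X^{1,p}_\infty$ with $\|u'\|_{L^p_\alpha}=\|u\|_{L^p_\theta}=1$; by Lemma~\ref{Lemma-norm1}, $f(s)=TMSC(s,\alpha,\theta)$ is the supremum of $\int_0^\infty\varphi_p(s|u|^{p/(p-1)})\,\mathrm{d}\lambda_\theta$ over precisely this class. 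For $0<s\le s_0$ I would write
\[
\varphi_p\!\left(s|u|^{\frac{p}{p-1}}\right)-\frac{s^{k_0}}{k_0!}|u|^{\frac{k_0p}{p-1}}=\sum_{j\ge k_0+1}\frac{s^{j}}{j!}|u|^{\frac{jp}{p-1}},
\]
and, factoring out $s^{k_0+1}|u|^{(k_0+1)p/(p-1)}$ and bounding the residual series by $e^{s|u|^{p/(p-1)}}$, obtain the pointwise estimate $\varphi_p(s|u|^{p/(p-1)})-\frac{s^{k_0}}{k_0!}|u|^{k_0p/(p-1)}\le s^{k_0+1}|u|^{(k_0+1)p/(p-1)}e^{s|u|^{p/(p-1)}}$. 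Since $k_0\ge p-1$ forces $q:=(k_0+1)p/(p-1)>p$, Lemma~\ref{TMLq} with exponent $\mu=s_0$ (which dominates $e^{s\,\cdot}$ for $s\le s_0$) furnishes a constant $c=c(s_0,\alpha,\theta,k_0)$ such that $\int_0^\infty[\varphi_p(s|u|^{p/(p-1)})-\frac{s^{k_0}}{k_0!}|u|^{k_0p/(p-1)}]\,\mathrm{d}\lambda_\theta\le c\,s^{k_0+1}$ for all such $u$ and all $0<s\le s_0$.

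Taking the supremum over admissible $u$ then yields $f(s)\le\frac{s^{k_0}}{k_0!}M+c\,s^{k_0+1}$, where $M=\sup\{\int_0^\infty|u|^{k_0p/(p-1)}\,\mathrm{d}\lambda_\theta : \|u'\|_{L^p_\alpha}=\|u\|_{L^p_\theta}=1\}$, and here the two cases diverge. If $k_0>p-1$ then $k_0p/(p-1)>p$, so by the continuous embedding \eqref{TM-compactembeddings} together with $\|u\|^p=\|u'\|^p_{L^p_\alpha}+\|u\|^p_{L^p_\theta}=2$ one has $M<\infty$; using $\frac{1-(s/\sigma)^{p-1}}{(s/\sigma)^{p-1}}\le\sigma^{p-1}s^{-(p-1)}$, this gives $\big[\frac{1-(s/\sigma)^{p-1}}{(s/\sigma)^{p-1}}\big]f(s)\le\sigma^{p-1}\big(\frac{M}{k_0!}s^{k_0-(p-1)}+c\,s^{k_0+1-(p-1)}\big)\to 0$ as $s\to0^+$, since $k_0-(p-1)>0$. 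If $k_0=p-1$ then $k_0p/(p-1)=p$, whence every admissible $u$ satisfies $\int_0^\infty|u|^{k_0p/(p-1)}\,\mathrm{d}\lambda_\theta=\|u\|^p_{L^p_\theta}=1$, i.e. $M=1$, so $\big[\frac{1-(s/\sigma)^{p-1}}{(s/\sigma)^{p-1}}\big]f(s)\le\sigma^{p-1}\big(\frac{1}{(p-1)!}+c\,s\big)\to\frac{\sigma^{p-1}}{(p-1)!}=\frac{\sigma^{k_0}}{k_0!}$, which is the asserted bound.

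The only genuinely delicate point is the uniform-in-$s$ control of the weighted integral $\int_0^\infty|u|^{(k_0+1)p/(p-1)}e^{s|u|^{p/(p-1)}}\,\mathrm{d}\lambda_\theta$; this is exactly what Lemma~\ref{TMLq} delivers, once one checks that $q=(k_0+1)p/(p-1)>p$ and that monotonicity in $s$ allows replacing $s$ by the fixed $s_0<\mu_{\alpha,\theta}$. Everything else is elementary: the series decomposition of $\varphi_p$, the factorisation estimate for the tail, the bound $\|u\|^p=2$ on the normalized constraint set, and the observation that the leading power $k_0p/(p-1)$ is either strictly supercritical (hence a finite supremum, when $k_0>p-1$) or equal to $p$ (hence the supremum equals the fixed constrained $L^p_\theta$-norm, when $k_0=p-1$).
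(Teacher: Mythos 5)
Your proof is correct and follows essentially the same route as the paper: the same splitting of $\varphi_p\big(s|u|^{p/(p-1)}\big)$ into its lowest-order term $\tfrac{s^{k_0}}{k_0!}|u|^{k_0p/(p-1)}$ plus a tail, the same reindexing and use of $(\ell+k_0+1)!\ge\ell!$ to dominate the tail by $s^{k_0+1}|u|^{(k_0+1)p/(p-1)}e^{s|u|^{p/(p-1)}}$, and the same appeal to Lemma~\ref{TMLq} (with $q=(k_0+1)p/(p-1)>p$) to get a bound that is uniform over the normalized constraint set, followed by the same case split on whether $k_0p/(p-1)$ exceeds or equals $p$. The one small difference is logistic: the paper fixes a sequence $s_n\downarrow 0$ and, invoking Theorem~\ref{thm-AT-Maximizers}, passes to extremals $u_n$ of $f(s_n)$ before estimating, whereas you bound directly over the set $\{\|u'\|_{L^p_\alpha}=\|u\|_{L^p_\theta}=1\}$ and control the leading coefficient by the supremum $M$ (finite via the embedding \eqref{TM-compactembeddings} when $k_0>p-1$, and identically $1$ when $k_0=p-1$). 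This modestly tightens the logical dependencies, since your version of the lemma no longer needs the attainability result Theorem~\ref{thm-AT-Maximizers}; otherwise the two arguments are the same.
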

\begin{proof}
Let $(s_n)$ be an arbitrary sequence that $s_n\downarrow 0$. From Theorem~\ref{thm-AT-Maximizers}, we can find a sequence  $(u_n)\subset X^{1,p}_{\infty}$, with  $\|u_{n}^{\prime}\|_{L^{p}_{\alpha}}\le 1$ and $\|u_n\|_{L^{p}_{\theta}}=1$ such that
\begin{equation}\nonumber
\begin{aligned}
f(s_{n})&=\frac{s^{k_0}_n}{k_0!}\int_{0}^{\infty}|u_n|^{\frac{k_0p}{p-1}}\mathrm{d}\lambda_{\theta}+s^{k_0+1}_{n}\sum_{j\ge k_0+1}\int_{0}^{\infty}\frac{s^{j-(k_0+1)}_n}{j!}|u_n|^{\frac{jp}{p-1}}\mathrm{d}\lambda_{\theta}\\
&=\frac{s^{k_0}_n}{k_0!}\int_{0}^{\infty}|u_n|^{\frac{k_0p}{p-1}}\mathrm{d}\lambda_{\theta}+s^{k_0+1}_{n}\sum_{\ell=0}^{\infty}\int_{0}^{\infty}\frac{s^{\ell}_n}{(\ell+k_0+1)!}|u_n|^{\frac{\ell p}{p-1}+\frac{(k_0+1)p}{p-1}}\mathrm{d}\lambda_{\theta}.
\end{aligned}
\end{equation}
Since $(\ell+k_0+1)!\ge \ell!$ and in view of Lemma~\ref{TMLq}, we can write
\begin{equation}\nonumber
\begin{aligned}
f(s_{n})&\le \frac{s^{k_0}_n}{k_0!}\int_{0}^{\infty}|u_n|^{\frac{k_0p}{p-1}}\mathrm{d}\lambda_{\theta}+s^{k_0+1}_{n}\int_{0}^{\infty}\left(e^{\sigma|u_n|^{\frac{p}{p-1}}}\right)|u_n|^{\frac{(k_0+1)p}{p-1}}\mathrm{d}\lambda_{\theta}\\
&\le  \frac{s^{k_0}_n}{k_0!}\int_{0}^{\infty}|u_n|^{\frac{k_0p}{p-1}}\mathrm{d}\lambda_{\theta}+s^{k_0+1}_{n}c(\sigma,\alpha,\theta).\\
\end{aligned}
\end{equation}
It follows that 
\begin{equation}\label{f2parts}
\begin{aligned}
\left(\frac{1-\left(\frac{s_n}{\sigma}\right)^{p-1}}{\left(\frac{s_n}{\sigma}\right)^{p-1}}\right)f(s_n)&\le \left(\frac{1-\left(\frac{s_n}{\sigma}\right)^{p-1}}{\left(\frac{1}{\sigma}\right)^{p-1}}\right)\frac{s^{k_0-(p-1)}_n}{k_0!}\left[\int_{0}^{\infty}|u_n|^{\frac{k_0p}{p-1}}\mathrm{d}\lambda_{\theta}+ c(\sigma,\alpha,\theta)k_0! s_n\right].
\end{aligned}
\end{equation}

\paragraph{Case~1:} $k_0>p-1$\\

From  \eqref{TM-compactembeddings} and \eqref{f2parts}, we obtain 
\begin{equation}\nonumber
\begin{aligned}
\left(\frac{1-\left(\frac{s_n}{\sigma}\right)^{p-1}}{\left(\frac{s_n}{\sigma}\right)^{p-1}}\right)f(s_n) &\rightarrow 0,\quad \mbox{as}\;\; n\rightarrow\infty.
\end{aligned}
\end{equation}

\paragraph{Case~2:} $k_0=p-1$\\

We  have
\begin{equation}\nonumber
\begin{aligned}
\left(\frac{1-\left(\frac{s_n}{\sigma}\right)^{p-1}}{\left(\frac{s_n}{\sigma}\right)^{p-1}}\right)f(s_n)&\le \left(\frac{1-\left(\frac{s_n}{\sigma}\right)^{p-1}}{\left(\frac{1}{\sigma}\right)^{p-1}}\right)\frac{1}{k_0!}\left[1+ c(\sigma,\alpha,\theta)s_n\right]\\
&\rightarrow  \frac{\sigma^{k_0}}{k_0!},\quad \mbox{as}\;\; n\rightarrow\infty.
\end{aligned}
\end{equation}
Hence, the proof is completed.
\end{proof}
\section{Proof of Theorem~\ref{thm-MR-Maximizers-analyzed}}
\label{sec5}
In this section, we will analyze the attainability of $TMC(\sigma, \alpha,\theta)$ when the condition  $k_0=p-1$ holds.
\begin{lemma}\label{vanishing-integer} For any $\sigma\in (0, \mu_{\alpha,\theta})$ we have
\begin{equation}\label{MT>sigma}
TMC(\sigma, \alpha,\theta)\ge \frac{\sigma^{k_0}}{k_0!}.
\end{equation}
In addition, if $p>2$ the above inequality becomes strict.
\end{lemma}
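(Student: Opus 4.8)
The plan is to exploit the standing assumption $k_0=p-1$ of this section (it is part of the hypotheses of Theorem~\ref{thm-MR-Maximizers-analyzed}), under which $\varphi_p(\sigma t^{p/(p-1)})=\frac{\sigma^{p-1}}{(p-1)!}\,t^{p}+\sum_{j\ge p}\frac{\sigma^{j}}{j!}\,t^{jp/(p-1)}$, so that $\varphi_p(\sigma|u|^{p/(p-1)})\ge \frac{\sigma^{k_0}}{k_0!}\,|u|^{p}$ pointwise. Hence for every admissible $u$ (i.e. $\|u\|\le 1$) one has $\int_0^\infty\varphi_p(\sigma|u|^{p/(p-1)})\,\mathrm{d}\lambda_\theta\ge \frac{\sigma^{k_0}}{k_0!}\|u\|_{L^p_\theta}^{p}$, and \eqref{MT>sigma} will follow once we exhibit admissible functions whose $L^p_\theta$-norm is arbitrarily close to $1$, i.e. whose gradient norm is arbitrarily small. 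Such functions are produced by the two-parameter scaling $w_\zeta(r)=\zeta\,w(\tau r)$ applied to a fixed nontrivial $w\in X^{1,p}_\infty(\alpha,\theta)$ (for instance a Lipschitz function with compact support in $(0,\infty)$, for which all the weighted norms below are finite and positive).

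Concretely, write $a=\|w'\|_{L^p_\alpha}^{p}>0$ and $b=\|w\|_{L^p_\theta}^{p}>0$. Because $\alpha=p-1$, the scaling relations \eqref{rts-LpLq} give $\|w_\zeta'\|_{L^p_\alpha}^{p}=\zeta^{p}a$ (independent of $\tau$) and $\|w_\zeta\|_{L^p_\theta}^{p}=\zeta^{p}b/\tau^{\theta+1}$. For $0<\zeta<a^{-1/p}$ I would choose $\tau=\tau(\zeta)>0$ by $\tau^{\theta+1}=\zeta^{p}b/(1-\zeta^{p}a)$, so that $\|w_\zeta'\|_{L^p_\alpha}^{p}+\|w_\zeta\|_{L^p_\theta}^{p}=1$, hence $w_\zeta$ is admissible for $TMC(\sigma,\alpha,\theta)$, while $\|w_\zeta\|_{L^p_\theta}^{p}=1-\zeta^{p}a\to 1$ as $\zeta\to 0^{+}$. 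Plugging $w_\zeta$ into the supremum and using the pointwise bound above yields $TMC(\sigma,\alpha,\theta)\ge \frac{\sigma^{k_0}}{k_0!}(1-\zeta^{p}a)$, and letting $\zeta\to 0^{+}$ gives \eqref{MT>sigma}.

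For the strict inequality when $p>2$, the idea is to keep also the first higher-order term $j=p$ in the series for $\varphi_p$. By \eqref{rts-fractional} and \eqref{rts-LpLq}, $\int_0^\infty|w_\zeta|^{q}\,\mathrm{d}\lambda_\theta=\zeta^{q}\tau^{-(\theta+1)}\int_0^\infty|w|^{q}\,\mathrm{d}\lambda_\theta=\frac{1-\zeta^{p}a}{b}\,\zeta^{\,q-p}\int_0^\infty|w|^{q}\,\mathrm{d}\lambda_\theta$, and the exponent attached to the $j=p$ term is $q=p^{2}/(p-1)$, whence $q-p=p/(p-1)$. Dropping the nonnegative terms $j>p$, this gives $\int_0^\infty\varphi_p(\sigma|w_\zeta|^{p/(p-1)})\,\mathrm{d}\lambda_\theta\ge \frac{\sigma^{k_0}}{k_0!}(1-\zeta^{p}a)+\kappa(1-\zeta^{p}a)\,\zeta^{\,p/(p-1)}$ with $\kappa=\frac{\sigma^{p}}{p!\,b}\int_0^\infty|w|^{p^{2}/(p-1)}\,\mathrm{d}\lambda_\theta>0$, i.e. $\int_0^\infty\varphi_p(\sigma|w_\zeta|^{p/(p-1)})\,\mathrm{d}\lambda_\theta-\frac{\sigma^{k_0}}{k_0!}\ge \zeta^{\,p/(p-1)}\bigl[\kappa(1-\zeta^{p}a)-\frac{\sigma^{k_0}a}{k_0!}\,\zeta^{\,p-p/(p-1)}\bigr]$. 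Since $p>2$ forces $p/(p-1)<p$, equivalently $p-p/(p-1)=p(p-2)/(p-1)>0$, the bracket tends to $\kappa>0$ as $\zeta\to 0^{+}$, so the right-hand side is strictly positive for all sufficiently small $\zeta>0$; taking any such $\zeta$ yields $TMC(\sigma,\alpha,\theta)>\frac{\sigma^{k_0}}{k_0!}$.

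The only delicate point — the main obstacle — is the bookkeeping of powers of $\zeta$: one must check that the term of index $j$ carries the $\zeta$-power $\frac{p(j-k_0)}{p-1}$, which is strictly increasing in $j$, so that $j=p$ is genuinely the leading correction and no cancellation occurs with later terms, and then verify the sign of the net coefficient. This last step is exactly where the hypothesis $p>2$ is used; for $p=2$ one has $p/(p-1)=p$, the gain $\kappa\zeta^{p}$ and the loss $\frac{\sigma}{1}a\zeta^{p}$ are of the same order with no a priori definite sign, which is fully consistent with the known identity $TMC(\sigma,1,\theta)=\sigma$ for small $\sigma$ recorded after Theorem~\ref{thm-MR-Maximizers-analyzed}.
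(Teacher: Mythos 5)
Your proof is correct and follows essentially the same two-parameter vanishing-scaling argument as the paper (attributed there to Ishiwata), merely parametrized by the amplitude $\zeta$ rather than by the gradient-norm factor $t$, with the normalization built into the choice of $\tau$ rather than an extra factor $\xi_t$. Both the lower bound coming from the leading $j=k_0=p-1$ term and the strictness for $p>2$, obtained because the $j=p$ correction contributes at order $\zeta^{p/(p-1)}$ which dominates the $\zeta^{p}$ loss when $p/(p-1)<p$, match the paper's reasoning.
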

\begin{proof}
We follow the argument of Ishiwata \cite{Ishi}.
Let $u\in X^{1,p}_{\infty}(\alpha,\theta)$ such that $\|u\|=1$, and  set $$u_t(r)=t^{1/p}u(t^{\frac{1}{\theta+1}}r).$$
We can easily show that
\begin{equation}\nonumber
\begin{aligned}
&\|u^{\prime}_t\|^{p}_{L^{p}_{\alpha}}=t\|u^{\prime}\|_{L^{p}_{\alpha}}^{p}\\
 &\|u_t\|^{q}_{L^{q}_{\theta}}=t^{\frac{q-p}{p}}\|u\|_{L^{q}_{\theta}}^{q},\;\; \forall\, q\ge p.
\end{aligned}
\end{equation}
In particular, for each $t>0$ small enough, if $\xi_{t}=(t+(1-t)\|u\|^{p}_{L^{p}_{\theta}})^{-1/p}$ we have 
$$
\|\xi_{t} u_t\|^{p}=t\xi^{p}_{t}\|u^{\prime}\|_{L^{p}_{\alpha}}^{p}+\xi^{p}_{t}\|u\|_{L^{p}_{\theta}}^{p}=1.
$$
Noticing that $\xi^{p}_{t}\rightarrow1/\|u\|^{p}_{L^{p}_{\theta}}$ as $t\rightarrow 0^{+}$, then for $v_t=\xi_{t}u_t$ we have
\begin{equation}\label{MT>sigmacadeia}
\begin{aligned}
TMC(\sigma, \alpha,\theta)&\ge  \int_{0}^{\infty}\varphi_{p}\left(\sigma\left|v_t\right|^{\frac{p}{p-1}}\right)\mathrm{d}\lambda_{\theta}\\
&\ge \frac{\sigma^{k_0}}{k_0!}\int_{0}^{\infty}|v_t|^{p}\mathrm{d}\lambda_{\theta}+\frac{\sigma^{k_0+1}}{(k_0+1)!}\int_{0}^{\infty}|v_t|^{\frac{p(k_0+1)}{p-1}}\mathrm{d}\lambda_{\theta}\\
&= \frac{\sigma^{p-1}}{(p-1)!}\left[\xi^{p}_{t}\|u\|^{p}_{L^{p}_{\theta}}+\frac{\sigma}{p}\xi^{\frac{p}{p-1}+p}_{t}\|u\|^{\frac{p^2}{p-1}}_{L^{\frac{p^2}{p-1}}_{\theta}}t^{\frac{1}{p-1}}\right]\\
& \rightarrow \frac{\sigma^{p-1}}{(p-1)!},\quad\mbox{as}\quad t\rightarrow 0.
\end{aligned}
\end{equation}
This proves \eqref{MT>sigma}. Moreover, if $p>2$ we  observe that the function
\begin{equation}\nonumber
h_{p,\theta,\sigma}(t)=\xi^{p}_{t}\|u\|^{p}_{L^{p}_{\theta}}+\frac{\sigma}{p}\xi^{\frac{p}{p-1}+p}_{t}\|u\|^{\frac{p^2}{p-1}}_{L^{\frac{p^2}{p-1}}_{\theta}}t^{\frac{1}{p-1}}
\end{equation}
satisfies $h_{p,\theta,\sigma}(0)=1$ and $h^{\prime}_{p,\theta,\sigma}(t)>0$ for $t>0$ small enough. Hence, the result follows from \eqref{MT>sigmacadeia}.

\end{proof}
\begin{lemma} \label{lemma-analyzed} 
\begin{description}
\item [$\mathrm{(i)}$] The function $\sigma\mapsto \frac{(p-1)!}{\sigma^{p-1}}TMC(\sigma,\alpha,\theta) $ is non-decreasing for $0<\sigma\le \mu_{\alpha,\theta}$.
\item [$\mathrm{(ii)}$]  Let $0<\sigma_1<\sigma_2\le \mu_{\alpha,\theta}$. Suppose that
$TMC(\sigma_{1},\alpha,\theta)$  is attained. Then
\begin{equation}\nonumber
\frac{(p-1)!}{\sigma^{p-1}_{2}}TMC(\sigma_{2},\alpha,\theta) >\frac{(p-1)!}{\sigma^{p-1}_{1}}TMC(\sigma_{1},\alpha,\theta)
\end{equation}
and $TMC(\sigma_{2},\alpha,\theta)$ is also attained.
\end{description}
\end{lemma}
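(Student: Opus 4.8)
The plan is to deduce both parts from the elementary scaling inequality \eqref{rts-exp} for $\varphi_p$, the extra ingredient being that the first line of \eqref{rts-exp} is in fact \emph{strict} whenever $0<\rho<1$ and the argument is positive --- this is immediate from the power series \eqref{exp-frac}, whose terms of order higher than $p-1$ are genuinely present. For part $\mathrm{(i)}$, fix $0<\sigma_1\le\sigma_2\le\mu_{\alpha,\theta}$ and set $\rho=\sigma_1/\sigma_2\in(0,1]$. For every $u$ with $\|u\|\le1$ one has, pointwise, $\varphi_p(\sigma_1|u|^{p/(p-1)})=\varphi_p\!\big(\rho\cdot\sigma_2|u|^{p/(p-1)}\big)\le\rho^{p-1}\varphi_p(\sigma_2|u|^{p/(p-1)})$ by \eqref{rts-exp}; integrating against $\mathrm{d}\lambda_\theta$, bounding the right side by $\rho^{p-1}TMC(\sigma_2,\alpha,\theta)$ (finite by Theorem~\ref{thm-MR}) and taking the supremum over admissible $u$ gives $TMC(\sigma_1,\alpha,\theta)\le(\sigma_1/\sigma_2)^{p-1}TMC(\sigma_2,\alpha,\theta)$, which is exactly the asserted monotonicity.

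For part $\mathrm{(ii)}$ I would take $u_1$ to be an extremal function for $TMC(\sigma_1,\alpha,\theta)$; this is the step where the hypothesis is essential, because along a mere maximizing sequence the strictness used below can be lost (the higher order terms may vanish in the weak limit under the compact embedding \eqref{TM-compactembeddings}). By Lemma~\ref{vanishing-integer}, $TMC(\sigma_1,\alpha,\theta)\ge\sigma_1^{p-1}/(p-1)!>0$, so $u_1\not\equiv0$. With $\rho=\sigma_1/\sigma_2\in(0,1)$, the expansion \eqref{exp-frac} gives for every $t>0$
\[
\rho^{p-1}\varphi_p(t)-\varphi_p(\rho t)=\sum_{j\ge k_0}\frac{\rho^{p-1}-\rho^{j}}{j!}\,t^{j}>0,
\]
since each term with $j>p-1$ is strictly positive (a possible $j=p-1$ term vanishes). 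Evaluating this at $t=\sigma_2|u_1(r)|^{p/(p-1)}$ on the positive-measure set $\{u_1\neq0\}$ and integrating --- the integral $\int_0^\infty\varphi_p(\sigma_2|u_1|^{p/(p-1)})\,\mathrm{d}\lambda_\theta$ being finite by Theorem~\ref{thm-MR} --- yields
\[
TMC(\sigma_1,\alpha,\theta)<\rho^{p-1}\!\int_0^\infty\!\varphi_p\!\left(\sigma_2|u_1|^{\frac{p}{p-1}}\right)\mathrm{d}\lambda_\theta\le\rho^{p-1}\,TMC(\sigma_2,\alpha,\theta),
\]
which is precisely $\frac{(p-1)!}{\sigma_1^{p-1}}TMC(\sigma_1,\alpha,\theta)<\frac{(p-1)!}{\sigma_2^{p-1}}TMC(\sigma_2,\alpha,\theta)$.

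Finally, for the attainability of $TMC(\sigma_2,\alpha,\theta)$ I would read the same chain from the other end: since $\|u_1\|\le1$, $u_1$ is admissible for $TMC(\sigma_2,\alpha,\theta)$, hence
\[
TMC(\sigma_2,\alpha,\theta)\ge\int_0^\infty\!\varphi_p\!\left(\sigma_2|u_1|^{\frac{p}{p-1}}\right)\mathrm{d}\lambda_\theta>\left(\tfrac{\sigma_2}{\sigma_1}\right)^{p-1}TMC(\sigma_1,\alpha,\theta)\ge\frac{\sigma_2^{p-1}}{(p-1)!},
\]
using Lemma~\ref{vanishing-integer} once more. Thus $TMC(\sigma_2,\alpha,\theta)>\sigma_2^{k_0}/k_0!$, and for $\sigma_2<\mu_{\alpha,\theta}$ Theorem~\ref{thm-MR-Maximizers}$\mathrm{(ii)}$ (case $k_0=p-1$) immediately produces a maximizer. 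The one delicate point is the endpoint $\sigma_2=\mu_{\alpha,\theta}$, which lies outside the range of Theorem~\ref{thm-MR-Maximizers}; there one argues separately, e.g. using the identity \eqref{equ-identity} together with the endpoint behaviour of $f$ recorded in Lemmas~\ref{f-continuous} and \ref{MTcotaS} (or, for the applications in Theorem~\ref{thm-MR-Maximizers-analyzed}, simply restricts to $\sigma_2<\mu_{\alpha,\theta}$). The genuine obstacle of the argument is not any single estimate but making sure that strictness survives integration --- which is exactly why one needs $TMC(\sigma_1,\alpha,\theta)$ to be \emph{attained}, not merely finite.
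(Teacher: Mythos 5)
Your proof is correct and follows essentially the same route as the paper's: the pointwise monotonicity of $\sigma\mapsto\sigma^{-(p-1)}\varphi_p(\sigma t^{p/(p-1)})$ (equivalently, the strict form of \eqref{rts-exp} for $0<\rho<1$), applied to an extremal of $TMC(\sigma_1,\alpha,\theta)$ to turn the pointwise strict inequality into a strict inequality between suprema, followed by Lemma~\ref{vanishing-integer} and Theorem~\ref{thm-MR-Maximizers}$\mathrm{(ii)}$ to conclude attainment for $\sigma_2$. Your remark about the endpoint $\sigma_2=\mu_{\alpha,\theta}$ (which lies outside the stated range of Theorem~\ref{thm-MR-Maximizers}) is a reasonable observation the paper passes over silently, but it does not alter the substance of the argument.
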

\begin{proof}
$\mathrm{(i)}$ Since
\begin{equation}\nonumber
\frac{(p-1)!}{\sigma^{p-1}}\varphi_{p}\left(\sigma|t|^{\frac{p}{p-1}}\right)=(p-1)!\sum_{j=p-1}^{\infty}\frac{\sigma^{j-(p-1)}}{j!}t^{\frac{jp}{p-1}}
\end{equation}
it is clear that for all $t\not=0$
\begin{equation}\label{varphiMono}
\frac{(p-1)!}{\sigma^{p-1}_{1}}\varphi_{p}\left(\sigma_{1}|t|^{\frac{p}{p-1}}\right)<\frac{(p-1)!}{\sigma^{p-1}_{2}}\varphi_{p}\left(\sigma_{2}|t|^{\frac{p}{p-1}}\right),\quad 0<\sigma_1<\sigma_2\le \mu_{\alpha,\theta}.
\end{equation}
Thus, $\mathrm{(i)}$ is proved.\\
$\mathrm{(ii)}$ Since $TMC(\sigma_{1},\alpha,\theta)$  is attained, we can pick $u\in X^{1,p}_{\infty}$ such that $\|u\|=1$ and $$TMC(\sigma_{1},\alpha,\theta)=\int_{0}^{\infty}\varphi_{p}\left(\sigma_1|u|^{\frac{p}{p-1}}\right)\mathrm{d}\lambda_{\theta}.$$
Thus, the Lemma~\ref{vanishing-integer} and  \eqref{varphiMono} yield
\begin{equation}\nonumber
\begin{aligned}
\frac{(p-1)!}{\sigma^{p-1}_2}TMC(\sigma_2, \alpha,\theta)&\ge\frac{(p-1)!}{\sigma^{p-1}_2}  \int_{0}^{\infty}\varphi_{p}\left(\sigma_{2}\left|u\right|^{\frac{p}{p-1}}\right)\mathrm{d}\lambda_{\theta}\\
&> \frac{(p-1)!}{\sigma^{p-1}_1}  \int_{0}^{\infty}\varphi_{p}\left(\sigma_{1}\left|u\right|^{\frac{p}{p-1}}\right)\mathrm{d}\lambda_{\theta}\\
&=\frac{(p-1)!}{\sigma^{p-1}_1}TMC(\sigma_1, \alpha,\theta)\ge 1.
\end{aligned}
\end{equation}
Then, we have  $\frac{(p-1)!}{\sigma^{p-1}_2}TMC(\sigma_2, \alpha,\theta)>1$ and thus Theorem~\ref{thm-MR-Maximizers}-$\mathrm{(ii)}$ asserts that  $TMC(\sigma_2, \alpha,\theta)$ is attained.
\end{proof}
\subsubsection*{Proof of Theorem~\ref{thm-MR-Maximizers-analyzed} completed}
$\mathrm{(i)}$ It follows directly from Lemma~\ref{lemma-analyzed} and the definition of $\sigma_{*}$.\\

\noindent$\mathrm{(ii)}$ From  Lemma~\ref{lemma-analyzed} the function $\sigma\mapsto \frac{(p-1)!}{\sigma^{p-1}}TMC(\sigma,\alpha,\theta)$ is strictly increasing on $(\sigma_{*},\mu_{\alpha,\theta})$.  Next, we will show that
\begin{equation}\label{sigma*}
TMC(\sigma_{*},\alpha,\theta)=\frac{\sigma^{p-1}_{*}}{(p-1)!}.
\end{equation}
For our convention  $TMC(0,\alpha,\theta)=0$,  we may assume $\sigma_{*}\in (0,\mu_{\alpha,\theta})$. From Lemma~\ref{vanishing-integer}, if \eqref{sigma*} is not true we must have
$$
TMC(\sigma_{*},\alpha,\theta)>\frac{\sigma^{p-1}_{*}}{(p-1)!}.
$$ 
Thus, since $\sigma_{*}<\mu_{\alpha}$, Theorem~\ref{thm-MR-Maximizers}-$\mathrm{(ii)}$ implies that $TMC(\sigma_{*},\alpha,\theta)$ is achieved for some   $u_{*}\in X^{1,p}_{\infty}$.  Also, we have
\begin{equation}\nonumber
\lim_{\sigma\rightarrow \sigma_{*}}\int_{0}^{\infty}\varphi_{p}\left(\sigma|u_{*}|^{\frac{p}{p-1}}\right)\mathrm{d}\lambda_{\theta}=\int_{0}^{\infty}\varphi_{p}\left(\sigma_{*}|u_{*}|^{\frac{p}{p-1}}\right)\mathrm{d}\lambda_{\theta}=TMC(\sigma_{*},\alpha,\theta)>\frac{\sigma^{p-1}_{*}}{(p-1)!}.
\end{equation}
Hence, if $\sigma\in (0,\sigma_{*})$ is sufficiently close to $\sigma_{*}$, we must have
\begin{equation}\nonumber
TMC(\sigma,\alpha,\theta)\ge \int_{0}^{\infty}\varphi_{p}\left(\sigma|u_{*}|^{\frac{p}{p-1}}\right)\mathrm{d}\lambda_{\theta}>\frac{\sigma^{p-1}_{*}}{(p-1)!}>\frac{\sigma^{p-1}}{(p-1)!}.
\end{equation}
Thus, for such a $\sigma\in (0,\sigma_{*})$, Theorem~\ref{thm-MR-Maximizers}-$\mathrm{(ii)}$ implies that $TMC(\sigma,\alpha,\theta)$ is achieved which contradicts the definition of $\sigma_{*}$. This proves \eqref{sigma*}. Now,  from \eqref{sigma*} and Lemma~\ref{lemma-analyzed}-$\mathrm{(ii)}$, for each $\sigma\in(\sigma_{*}, \mu_{\alpha,\theta})$,  the supremum $TMC(\sigma,\alpha,\theta)$ is attained and  we also  have
\begin{equation}\label{MT>}
\frac{(p-1)!}{\sigma^{p-1}}TMC(\sigma,\alpha,\theta) >\frac{(p-1)!}{\sigma^{p-1}_{*}}TMC(\sigma_{*},\alpha,\theta)=1.
\end{equation}
In addition, Lemma~\ref{vanishing-integer}, Theorem~\ref{thm-MR-Maximizers}-$\mathrm{(ii)}$ and the definition of $\sigma_{*}$ yield 
\begin{equation}\label{MT=}
TMC(\sigma,\alpha,\theta)=\frac{\sigma^{p-1}}{(p-1)!},\quad\mbox{for each}\;\; \sigma\in [0,\sigma_{*}].
\end{equation}
Now, it is clear that \eqref{MT>} and \eqref{MT=} give \eqref{MTvalue}. Finally, let us denote
$$
\overline{\sigma}_{*}=\inf\left\{\sigma\in (0,\mu_{\alpha,\theta})\;:\; TMC(\sigma, \alpha,\theta)>\frac{\sigma^{p-1}}{(p-1)!} \right\}.
$$
Then, Theorem~\ref{thm-MR-Maximizers}-$\mathrm{(ii)}$ yields  $\sigma_{*}\le \overline{\sigma}_{*}$.  If $\sigma_{*}<\overline{\sigma}_{*}$ we can pick $\sigma_{0}\in (\sigma_{*}, \overline{\sigma}_{*})$ for which we must have 
\begin{equation}\nonumber
\frac{(p-1)!}{\sigma^{p-1}_{0}}TMC(\sigma_0,\alpha,\theta) >\frac{(p-1)!}{\sigma^{p-1}_{*}}TMC(\sigma_{*},\alpha,\theta)=1,
\end{equation}
that is,
\begin{equation}\nonumber
TMC(\sigma_0,\alpha,\theta) >\frac{\sigma^{p-1}_{0}}{(p-1)!}
\end{equation}
which contradicts the definition of $\overline{\sigma}_{*}$. Hence \eqref{inf-carach} holds. Finally, $\mathrm{(iii)}$ follows directly from Lemma~\ref{vanishing-integer}. 

\bigskip
\bigskip
\end{document}